\theoremstyle{plain} 
\newtheorem{prop}{Proposition}[section]
\newtheorem*{prop*}{Proposition}
\newtheorem{thm}[prop]{Theorem}
\newtheorem*{thm*}{Theorem}
\newtheorem{lem}[prop]{Lemma} 
\newtheorem{cor}[prop]{Corollary}
\theoremstyle{remark}
\newtheorem{oss}[prop]{Remark}
\newtheorem{ex}[prop]{Example}
\newtheorem{ass}[prop]{Setting}
\theoremstyle{definition}
\newtheorem{defn}[prop]{Definition}
\newcommand{\hk}{hyperk\"{a}hler }
\newcommand{\kahl}{K\"{a}hler }
\newcommand{\kntiposp}{$K3^{[n]}$ type }
\newcommand{\be}{\begin{equation}}
\newcommand{\ee}{\end{equation}}
\renewcommand{\phi}{\varphi}
\DeclareMathOperator{\Aut}{Aut}
\newcommand{\ZZ}{\mathbb{Z}}
\begin{document}
\title{Monodromy and birational geometry of O'Grady's sixfolds}
\author{Giovanni Mongardi}
\address{Alma Mater Studiorum, Universit\`{a} di Bologna,  P.zza di porta san Donato, 5, 40126 Bologna, Italia}
\author{Antonio Rapagnetta}
\address{Università degli studi di Roma ``Tor Vergata'', Via della Ricerca Scientifica 1, 00133 Roma, Italia}

\begin{abstract}
We prove that the bimeromorphic class of a \hk manifold deformation equivalent to O'Grady's six dimensional one is determined by the Hodge structure of its Beauville-Bogomolov lattice by showing that the monodromy group is maximal. As applications, we give the  structure for the K\"ahler and the birational K\"ahler  cones in this deformation class and we prove that the existence of a square zero divisor implies the existence a rational lagrangian fibration with fixed fibre types. 

\end{abstract}
\keywords{Keywords: O'Grady's sixfolds; Monodromy group; Ample cone; Lagrangian Fibration\\ MSC 2010 classification: 14D05; 14E30; 14J40.}

\maketitle

\section{Introduction}

This paper deals with a deformation class of \hk manifolds, which was first discovered by O'Grady \cite{OG6}. 
These manifolds are sixfolds whose second Betti number is $8$ and are usually called manifolds of $OG6$ type. 
Manifolds in this family are obtained in two ways. The first construction, is obtained by taking a generic  abelian surface and a Mukai vector $w$ of square $2$. The moduli space of Gieseker semistable sheaves with Mukai vector $2w$ is a singular tenfold with rational singularities, whose Albanese fibre admits a crepant resolution that is
a \hk manifold in the family  we are dealing with. This was proven by\textsl{}
O'Grady \cite{OG6} for a special Mukai vector. Later M. Lehn and Sorger \cite{ls} showed that, under our assumption on $w$, the blow up of the Albanese fibre of the moduli space along its singular locus always gives a crepant resolution and Perego and the second named author proved in \cite{pr_crelle} that these crepant resolutions are deformation equivalent, along smooth projective deformations, to the original O'Grady's example.\\
A second construction was obtained in \cite{MRS}, by considering a principally polarized abelian surface $A$ and its Kummer K3 surface $S$. On a moduli space of sheaves on $S$, the authors construct a non regular involution, whose quotient is birational to a manifold of $OG6$ type. This last construction was used to compute the Hodge numbers of manifolds of $OG6$ type, and the present paper is a continuation of \cite{MRS} aiming at a greater understanding of their geometry.\\

For every \hk manifold $X$, the second cohomology group $H^{2}(X,\ZZ)$ has a natural lattice structure induced by the Beauville-Bogomolov form $B_{X}$, compatible with the weight two Hodge structure and this datum is a fundamental invariant for $X$: by Verbitsky's Global Torelli Theorem
it determines, up to a finite indeterminacy, the bimeromorphic class of $X$
among  \hk manifolds in the same \hk deformation equivalence class of $X$.

In this paper we show that in the case of $OG6$ some basic geometric invariants are completely determined by the weight two Hodge structure of the Beauville-Bogomolov lattice.   
The main result is Theorem \ref{thm:monodromy}(2) stating that 
the Classical Bimeromorphic  Global Torelli Theorem, as conjectured in  Speculation 10.1 of \cite{huy_basic}, holds  for manifolds of $OG6$ type:
\begin{thm}
Let $X,Y$ be two \hk  manifolds of $OG6$ type and equip the cohomology groups $H^2(X,\ZZ)$ and $H^2(Y,\ZZ)$ with the lattice structures induced by the Beauville-Bogomolov forms of 
$X$ and $Y$. Then $X$ and $Y$ are bimeromorhic if and only if there exists an isometric isomorphism of integral Hodge structures $H^2(X,\ZZ)\cong H^2(Y,\ZZ)$.
\end{thm}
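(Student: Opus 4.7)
The strategy is to deduce the bimeromorphic Global Torelli Theorem from the maximality of the monodromy group, which is the content of part (1) of Theorem \ref{thm:monodromy}: the monodromy group of a \hk manifold of $OG6$ type coincides with the full group of orientation-preserving isometries of the Beauville-Bogomolov lattice. The general tool is Verbitsky's Global Torelli Theorem, in Markman's refined formulation: two \hk manifolds $X,Y$ in the same deformation class are bimeromorphic if and only if there exists a Hodge isometry $H^2(X,\ZZ)\to H^2(Y,\ZZ)$ which is a parallel transport operator.

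The "only if" direction is standard, as any bimeromorphism of \hk manifolds induces such a Hodge isometry. For the converse, suppose $\phi:H^2(X,\ZZ)\to H^2(Y,\ZZ)$ is a Hodge isometry. If $\phi$ reverses the orientation of the positive three-spaces of $H^2(X,\R)$ and $H^2(Y,\R)$, replace it with $-\phi$: this is again a Hodge isometry, since it preserves $H^{2,0}$, and it flips the orientation on the three-dimensional positive space. Thus, without loss of generality, one may assume $\phi$ is orientation-preserving, i.e.\ $\phi$ belongs to $O^+(H^2(Y,\ZZ))$.

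By Theorem \ref{thm:monodromy}(1), $O^+(H^2(Y,\ZZ))$ coincides with the monodromy group of $Y$, so $\phi$ is in particular a parallel transport operator. Verbitsky--Markman Torelli then yields the desired bimeromorphism $X\dashrightarrow Y$. The whole derivation is therefore a formal consequence of part (1) of Theorem \ref{thm:monodromy}, which is the conceptual heart of the matter and presumably the main obstacle: it will require using the two known constructions of $OG6$ manifolds (the Mukai one on abelian surfaces and the Kummer-type construction of \cite{MRS}) to exhibit enough explicit automorphisms and monodromy operators on specific manifolds of $OG6$ type to generate the full orientation-preserving isometry group of the Beauville-Bogomolov lattice.
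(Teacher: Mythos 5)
Your proposal is correct and follows essentially the same route as the paper: reduce to the maximality statement $Mon^2=O^+$ of Theorem \ref{thm:monodromy}(1), apply Markman's Hodge-theoretic Torelli theorem, and absorb the possible orientation-reversal by replacing $\phi$ with $-\phi$ (legitimate because the positive part has odd rank $3$). The only step the paper makes explicit that you leave implicit is composing $\phi$ with some parallel transport operator $t:H^2(Y,\ZZ)\to H^2(X,\ZZ)$ (which exists since $X$ and $Y$ are deformation equivalent) so that $t\circ\phi$ becomes an element of $O^+(H^2(X,\ZZ))=Mon^2(X)$, whence $\phi$ itself is a parallel transport operator.
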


We remark that the Classical Bimeromorphic Global Torelli Theorem  rather rarely happens to hold  for deformation equivalence classes of known \hk manifolds: it only holds (among known \hk manifolds)  for $K3$ surfaces and their Hilbert schemes of $n$ points if $n-1$ is a prime power and for O'Grady's ten dimensional manifolds thanks to recent work of Onorati \cite{ono}. The Theorem fails for Hilbert schemes on K3 surfaces if $n$ is not a prime power.
Finally, the Classical Bimeromorphic Global Torelli Theorem always fails for generalized Kummer manifolds, due to a classical counterexample by Namikawa \cite{nami_killtor}, as replacing the abelian surface used to construct the generalized Kummer manifold with its dual does not change the second Hodge structure but the two Kummer manifolds are not  birational in general. Due to this counterexample and due to the role of an abelian surface in the construction of O'Grady's six dimensional manifolds, one could expect a similar failure of the Global Torelli Theorem for O'Grady's sixfolds. However, this is not the case and an intuitive explanation of this fact could be that the relevant manifolds for O'Grady's construction are not only an abelian surface $A$, but rather $A\times A^\vee$ and the Kummer K3 surface $\widetilde{A_{/\pm 1}}$ (cf. \cite{MRS}).

We present two main applications. The first one is Theorem \ref{thm:amp} and gives the description of 
the K\"ahler cone $K(X)\subset H^{1,1}(X,\mathbb{R})$ and of  the closure $\overline{BK}(X)\subset H^{1,1}(X,\mathbb{R})$ of the  birational K\"ahler cone of a \hk manifold of $OG6$ type $X$, in terms of its weight two Hodge structure in a purely lattice theoretic way.
\begin{thm}
Let $X$ be a \hk manifold of $OG6$ type and 
let the positive cone $C(X)$ of $X$ be the connected component of the cone $$\left\{\alpha\in H^{1,1}(X,\mathbb{R}): B_{X}(\alpha,\alpha)>0 \right\}$$
containing a K\"ahler class. Then
\begin{enumerate}
\item{The closure in $H^{1,1}(X,\mathbb{R})$ of the birational K\"ahler cone $\overline{BK}(X)$ of $X$ is  the closure of the  connected component of $$C(X)\setminus \bigcup_{\substack{\alpha\in H^{1,1}(X,\ZZ),\\
B_{X}(\alpha,\alpha)=-2\; or \; -4, \\ div(\alpha)=2.}} \alpha^{\perp_{B_{X}}}$$ containing a K\"ahler class.}
\item{The K\"ahler cone $K(X)$ is the connected component of 
$$C(X)\setminus \bigcup_{\substack{\alpha\in H^{1,1}(X,\ZZ),\\
B_{X}(\alpha,\alpha)=-2\; or  \\ 
B_{X}(\alpha,\alpha)=-4\; and\;  div(\alpha)=2.}} \alpha^{\perp_{B_{X}}}$$
containing a K\"ahler class.}
\end{enumerate}
\end{thm}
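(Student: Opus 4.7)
The strategy is to combine Markman's general description of the K\"ahler and birational K\"ahler cones of \hk manifolds with the maximality of the monodromy group $\mathrm{Mon}^2$ of $OG6$ type manifolds (Theorem~\ref{thm:monodromy}(1)). By Markman's theory, $\overline{BK}(X)$ is a connected component of $C(X)$ minus the union of hyperplanes $\alpha^{\perp}$ with $\alpha$ a \emph{stably prime exceptional} class in $H^{1,1}(X,\ZZ)$, and $K(X)$ is further cut by the hyperplanes orthogonal to the \emph{MBM classes} corresponding to flopping contractions (in the sense of Amerik--Verbitsky). Both families are monodromy invariant subsets of $H^{1,1}(X,\ZZ)$, hence of parallel-transport invariant type.

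The first step is to reduce to monodromy orbits of possible wall classes. Since $\mathrm{Mon}^2$ is maximal by Theorem~\ref{thm:monodromy}(1) and the lattice $H^2(X,\ZZ)$ for $OG6$ type contains two orthogonal hyperbolic planes, an Eichler-type criterion implies that the $\mathrm{Mon}^2$-orbit of a primitive class $\alpha$ is determined by the pair $(B_X(\alpha,\alpha),\mathrm{div}(\alpha))$. The second step is to exhibit explicit $OG6$ examples realising each wall type in the statement: using the Lehn--Sorger--Perego--Rapagnetta symplectic resolution of the Albanese fibre of a moduli space of sheaves on an abelian surface, one identifies prime exceptional divisors whose classes have square $-2$ and divisibility $2$ (from the strict transform of the singular locus) and square $-4$ and divisibility $2$ (from the exceptional divisor of the blow-up); via the same resolution, or via the MRS involution model, one exhibits smooth rational curves whose dual classes have square $-2$ and divisibility $1$, contracted by a Mukai-type flop. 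Since parallel transport preserves the (stably) prime exceptional and MBM properties, this promotes the walls to every $X$ in the deformation class, yielding one inclusion of each statement.

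For the converse inclusion one must show that no other class of $H^{1,1}(X,\ZZ)$ defines a wall of $\overline{BK}(X)$ or $K(X)$. The standard technique is to prove that for any candidate class $\alpha$ not appearing in the statement -- for instance a class of square $-4$ with $\mathrm{div}(\alpha)=1$, or of square $-2n$ with $n\geq 3$ -- one can deform the Hodge structure on $H^2(X,\ZZ)$ within its period domain, keeping $\alpha$ of type $(1,1)$, so that a K\"ahler class on the nearby fibre sits arbitrarily close to $\alpha^{\perp}$. Combining the projectivity criterion with the monodromy maximality, this reduces to a numerical check in the abstract lattice, excluding all classes other than those listed; the same arguments distinguish divisorial walls of $\overline{BK}(X)$ from purely flopping walls, accounting for the extra $\mathrm{div}=2$ condition on $-2$-classes in part $(1)$.

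The main obstacle is the second step, namely the explicit identification and numerical computation of the prime exceptional divisors and of the flopping classes on a particular $OG6$ example. For the divisorial walls this requires a careful description of the irreducible components of the exceptional locus of the Lehn--Sorger resolution and of their images in $H^2$, together with the computation of their Beauville--Bogomolov squares and divisibilities; for the flopping walls one must exhibit, or deform to, an extremal contraction whose numerical class has the prescribed invariants. Both computations are made delicate by the singular nature of the underlying moduli space and constitute the geometric heart of the argument.
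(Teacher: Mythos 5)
Your overall architecture matches the paper's: quote Markman's description of $\overline{BK}(X)$ as cut out by stably prime exceptional classes and the description of $K(X)$ as cut out by wall divisors, use monodromy maximality (Theorem~\ref{thm:monodromy}(1)) plus an Eichler-type criterion to reduce everything to the pair (square, divisibility), and realise the listed wall types on explicit moduli-space models. Two remarks on that part. First, your attribution of the divisorial classes is swapped: in the model $\widetilde{K}_{(2,0,-2)}(A,H)$ the exceptional divisor $\widetilde{\Sigma}$ of the blow-up of the singular locus is twice a class $E$ with $[E]^2=-2$, $div=2$, while the square $-4$, $div=2$ class is carried by the strict transform $B$ of the locus of non-locally-free sheaves (O'Grady's divisor), which is not the exceptional divisor of the resolution. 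Second, the square $-2$, $div=1$ wall indeed comes from a small contraction (contracting copies of $\mathbb{P}^3$, with extremal curve class proportional to $(0,e-f,0)$), as you suggest.

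The genuine gap is in your converse direction, i.e. showing that no class outside the list is a wall. Your proposed argument -- deform the Hodge structure keeping $\alpha$ of type $(1,1)$ so that ``a K\"ahler class on the nearby fibre sits arbitrarily close to $\alpha^{\perp}$'', then conclude by ``a numerical check in the abstract lattice'' -- does not establish anything: walls of the K\"ahler cone are by definition accumulation loci of K\"ahler classes, so having K\"ahler classes arbitrarily close to $\alpha^{\perp}$ is compatible with $\alpha$ being a wall divisor. What must be produced is a K\"ahler (or ample) class actually orthogonal to a class in the monodromy orbit of $\alpha$, and this is a geometric statement that cannot be extracted from the lattice alone. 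In the paper this is exactly the content of Lemma~\ref{lem:ample}: using the Donaldson/Le Potier determinant construction (\cite[Theorem 8.1.11]{HL}) and the birational geometry of Example~\ref{esempio}, one writes down explicit ample classes on $\widetilde{K}_{(2,0,-2)}(A,H)$, reduces the remaining candidate classes to four standard forms A)--D) via Eichler's criterion, and exhibits for each an ample class orthogonal to it. Without a substitute for this construction (or for some other mechanism producing K\"ahler classes on $\alpha^{\perp}$), your proof of the ``only these walls occur'' half of both statements is missing, and with it the proof of Theorem~\ref{thm:amp} is incomplete.
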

In this statement, for $\alpha\in H^{2}(X,\ZZ)$ the subspace $\alpha^{\perp_{B_{X}}}\subseteq H^{2}(X,\mathbb{R})$ is the perpendicular to $\alpha$ with respect to the real extension of $B_{X}$ and $div(\alpha)$ is the divisibility of $\alpha$ in $ H^{2}(X,\ZZ)$, i.e. the minimum strictly positive integer that can be obtained as $B_{X}(\alpha,\beta)$ for  $\beta\in H^{2}(X,\ZZ)$ (see Definition \ref{def1}). We remark that, when $X$ is projective, the ample cone is the intersection of the K\"ahler cone with $H^{1,1}(X,\ZZ)\otimes\mathbb{R}$ and the movable cone is $\overline{BK(X)} \cap (H^{1,1}(X,\ZZ)\otimes\mathbb{R})$.

The second main application concerns the existence of lagrangian fibrations on \hk manifolds of $OG6$ type. A conjecture due to Hassett-Tschinkel, Huybrechts and Sawon predicts the existence of a lagrangian fibration on every \hk manifold admitting a non zero divisor  class  isotropic with respect to the Beauville-Bogomolov form.
Corollary \ref{cor:lagr} settles this conjecture for \hk manifolds of $OG6$ type.
\begin{cor}\label{cor:intro_lagr}
Let $X$ be a manifold of $OG6$ type with a divisor whose class is non  zero and isotropic. Then $X$ has a bimeromorphic  model which has a dominant map to $\mathbb{P}^3$ whose general fiber is a $(1,2,2)$-polarized abelian threefold.
\end{cor}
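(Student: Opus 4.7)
The plan is to combine the Global Torelli Theorem (Theorem \ref{thm:monodromy}(2)) with the description of the birational K\"ahler cone (Theorem \ref{thm:amp}(1)) to reduce the assertion to the exhibition, for each monodromy orbit of primitive integral isotropic classes in the OG6 lattice, of one \hk manifold of $OG6$ type admitting a rational Lagrangian fibration of the stated type with an isotropic class in that orbit.

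First I would replace $X$ by a birational model on which $L$ becomes nef. Since $L$ is a nonzero isotropic $(1,1)$-class, the ray $\mathbb{R}_{\ge 0}L$ lies on the boundary of the positive cone $C(X)$; moreover, none of the wall classes appearing in Theorem \ref{thm:amp}(1), having square $-2$ or $-4$, are isotropic. Hence $\mathbb{R}_{\ge 0}L$ is contained in the closure of a unique chamber of the decomposition of $C(X)$ into birational K\"ahler cones, corresponding to a birational model $X'$ of $X$ and an image class $L'$ on $X'$ which is nef with $B_{X'}(L',L')=0$.

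Next I would classify the monodromy orbits of primitive integral isotropic vectors in the OG6 Beauville--Bogomolov lattice $\Lambda_{OG6}\cong U^{\oplus 3}\oplus\langle -2\rangle^{\oplus 2}$: by maximality of the monodromy group (Theorem \ref{thm:monodromy}(1)) and a standard discriminant-form computation, these orbits form a short finite list, distinguished by divisibility (necessarily in $\{1,2\}$) and by the image in the discriminant group $(\Z/2)^{2}$. For each orbit I would exhibit a Lagrangian fibration on an OG6 manifold with class in that orbit via the moduli-of-sheaves construction of O'Grady, Lehn--Sorger and Perego--Rapagnetta: for an abelian surface $A$ with a polarization $D$ satisfying $D^{2}=2$ and Mukai vector $v=2(0,D,s)$, the support map $M_{v}(A)\to|2D|\cong\mathbb{P}^{3}$ restricts, after Lehn--Sorger desingularization of the Albanese fiber, to a rational Lagrangian fibration on an OG6 manifold whose general fiber is a quotient of the compactified Jacobian of a genus-$5$ curve by the translation action of $A$; a Mukai-pairing calculation identifies this quotient as a $(1,2,2)$-polarized abelian threefold. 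Varying $A$, the choice of $s$, and passing when necessary to the dual abelian surface together with the Kummer-type construction of \cite{MRS} realizes every orbit. Applying Theorem \ref{thm:monodromy}(2) to transport the fibration from this example to $X'$ along the Hodge isometry sending the two distinguished isotropic classes to each other finishes the proof.

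The main obstacle is the last step: surjecting onto every monodromy orbit of primitive isotropic $(1,1)$-classes by the moduli-of-sheaves and dual/Kummer constructions, and simultaneously identifying the general fiber precisely as a $(1,2,2)$-polarized abelian threefold. The polarization identification, while standard in principle via the Mukai pairing on $v^{\perp}/\mathbb{Z}v$, requires careful bookkeeping through both the restriction to the Albanese fiber and the Lehn--Sorger desingularization; the orbit-hitting step reduces to an explicit lattice-theoretic match between the classes of pullback hyperplane divisors from the various constructions and the abstract list of isotropic orbits in $\Lambda_{OG6}$.
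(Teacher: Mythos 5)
There is a genuine gap, and it sits exactly at the step you flag as ``finishing the proof''. You cannot transport the Lagrangian fibration from the moduli example to $X'$ by ``applying Theorem \ref{thm:monodromy}(2) along the Hodge isometry sending the two distinguished isotropic classes to each other'': an isometry matching two isotropic classes is in general \emph{not} a Hodge isometry, and for an arbitrary $X$ of $OG6$ type (possibly non-projective, possibly with Picard rank one generated by the isotropic class) there is simply no Hodge isometry between $H^2(X,\ZZ)$ and the $H^2$ of any moduli-space example, so the Global Torelli Theorem gives nothing here. What is actually needed, and what the paper does, is a deformation argument for \emph{pairs}: by Matsushita's theorem \cite[Theorem 1.2]{matsu_lagr}, inside $Def(X,\mathcal{O}(D))$ the locus where the transported class induces a birational Lagrangian fibration is either empty or dense, so the property ``$\mathcal{O}(D)$ induces a birational Lagrangian fibration'' is open and closed in families of pairs; by Markman's \cite[Lemma 5.17(ii)]{mar_prime}, together with the fact that primitive isotropic classes form a \emph{single} monodromy orbit (the paper's Lemma \ref{lem:lagrorbit}: divisibility $2$ would force square $\equiv -2$ or $-4 \pmod 8$, so the divisibility is $1$ and Eichler's criterion applies), any two pairs $(X,\mathcal{O}(D))$, $(X',\mathcal{O}(D'))$ with isotropic class in the boundary of the birational K\"ahler cone are deformation equivalent; and by Wieneck \cite[Theorem 1.1]{wie} the polarization type $(1,2,2)$ of the general fiber is a deformation invariant of the pair, so it suffices to identify it on the single example $\widetilde{K}_{(0,2h,0)}(A,H)$. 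None of this machinery appears in your proposal, and without it the reduction to one (or finitely many) explicit examples does not close.

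Two secondary points. First, your orbit analysis is deferred (``a short finite list''), whereas the single-orbit statement is what makes the deformation equivalence of pairs work with only one example; your plan of hitting \emph{every} orbit with separate constructions is both harder and unnecessary. Second, your step placing $\mathbb{R}_{\ge 0}L$ in the closure of a unique birational chamber is not justified as written: the relevant question is whether $L$ lies on a wall hyperplane $\alpha^{\perp_{B_X}}$ (an isotropic class can be orthogonal to a class of square $-2$ or $-4$), not whether wall classes are isotropic, and you also ignore the sign ambiguity. The paper handles this by invoking Markman's transitivity of monodromy Hodge isometries on exceptional chambers \cite[Section 6]{mark_tor}, which moves $[D]$ or $-[D]$ into the closure of the birational K\"ahler cone; some argument of this kind is needed before the deformation step can even be set up.
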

As a consequence of this Corollary, we also prove Beauville's weak splitting property for \hk manifolds of $OG6$ type with a square zero  non trivial line bundle (see Corollary \ref{ultimo}). 

Our proof of the Classical Bimeromorphic Global Torelli for $OG6$ 
rests on Markman's Hodge theoretic version \cite[Thm 1.3]{mark_tor} of Verbitsky's Global Torelli theorem \cite{ver_tor} stating that  
if  $X$ and $Y$ are two \hk manifolds in the same deformation equivalence class, they are bimeromorphic if and only if there exists an isometric, with respect to the Beauville-Bogomolov forms,  isomorphism  $H^2(X,\ZZ)\cong H^2(Y,\ZZ)$ coming from parallel transport. 
This Theorem reduces the problem of the validity of 
the Classical Bimeromorphic Global Torelli Theorem to the computation of 
the monodromy group, which is the group of all transformations of the second cohomology which can be obtained by taking parallel transport along loops in families of smooth K\"ahler deformations of our \hk manifold in the chosen deformation equivalence class.
The result of this calculation  (see Theorem \ref{thm:monodromy}(1)) can be considered the core of this paper. 
\begin{thm}
Let $X$ be a \hk manifold of $OG6$ type, then the Monodromy group $Mon^2(X)$
is the subgroup $O^{+}(H^{2}(X,\ZZ))$  of $\ZZ$ linear automorphisms
preserving the Beauville-Bogomolov form and the orientation of the positive cone $C(H^2(X,\ZZ))$  of $H^2(X,\ZZ)$.
\end{thm}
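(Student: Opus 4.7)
The inclusion $Mon^{2}(X) \subseteq O^{+}(H^{2}(X,\ZZ))$ is automatic: parallel transport preserves the Beauville-Bogomolov form and, by connectedness of the base of a family of marked pairs, preserves the orientation of the positive cone. The theorem thus reduces to the opposite inclusion, and the plan is to exhibit, by explicit geometric constructions, enough elements of $Mon^{2}(X)$ to generate $O^{+}(H^{2}(X,\ZZ))\cong O^{+}(U^{\oplus 3}\oplus \langle -2\rangle^{\oplus 2})$. Since by Markman's general machinery $Mon^{2}(X)$ is already known to be a finite-index subgroup of $O^{+}$, the task is to realise representatives for each coset of a concrete chosen generating set of $O^{+}$.

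The first source of monodromies is the O'Grady/Lehn-Sorger construction. Fix a generic abelian surface $A$ and a Mukai vector $w$ with $w^{2}=2$; the crepant resolution $\widetilde{M}$ of the Albanese fibre of $M_{2w}(A)$ is of $OG6$ type and deformation-equivalent to $X$, and the Mukai morphism identifies $H^{2}(\widetilde{M},\ZZ)$ with $w^{\perp}\subset\widetilde{H}(A,\ZZ)$ augmented by two exceptional classes. Any Hodge isometry of $\widetilde{H}(A,\ZZ)$ fixing $w$ lifts via a Fourier-Mukai autoequivalence of $D^{b}(A)$ to a birational self-map of $M_{2w}(A)$, which in turn induces a parallel-transport operator on $\widetilde{M}$; letting $A$ vary in families exhausts the expected image in $O^{+}(w^{\perp})$.

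The second source is the construction of \cite{MRS}: the non-regular involution on a moduli space over the Kummer K3 surface $\widetilde{A_{/\pm 1}}$ produces a birational self-map whose induced isometry mixes classes originating from $A$ with those from $A^{\vee}$, an effect invisible from derived autoequivalences of a single abelian surface. Complementing these with reflections in $(-2)$-classes and in $(-4)$-classes of divisibility $2$ — arising from prime exceptional divisors of suitable small birational contractions of explicit $OG6$ models, and shown to be monodromies via Markman's theorem on prime exceptional reflections — furnishes a candidate generating family inside $Mon^{2}(X)$.

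The final step, and the main obstacle, is the lattice-theoretic assembly. Via the short exact sequence $1\to W\to O^{+}(H^{2}(X,\ZZ))\to O(A_{H^{2}(X,\ZZ)})\to 1$, where the discriminant group is $(\ZZ/2)^{\oplus 2}$ and $W$ is controlled by Eichler-type transvections, showing that the subgroup generated by the explicit monodromies above equals $O^{+}$ reduces to a surjectivity check on the discriminant together with a verification that enough transvections are present. The heart of the difficulty is precisely the discriminant surjectivity: this is the point at which the Namikawa counterexample \cite{nami_killtor} blocks the analogous statement for generalised Kummer manifolds, and one expects that here it is the simultaneous availability of the direct construction on $A$ and of the MRS involution on $\widetilde{A_{/\pm 1}}$ which produces the missing discriminant classes, in line with the intuition flagged in the introduction.
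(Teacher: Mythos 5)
Your outline identifies the right target group, but at every point where the actual mathematical content lies it substitutes an assertion or an expectation for an argument, and the central assertion is not correct as stated. You claim that \emph{any} Hodge isometry of the Mukai lattice fixing $w$ lifts via a Fourier--Mukai autoequivalence of $D^{b}(A)$ to a birational self-map of $M_{2w}(A)$ inducing a parallel transport operator, and that varying $A$ ``exhausts'' $O^{+}(w^{\perp})$. This is precisely what needs proof and fails in that generality: the cohomological actions of autoequivalences of an abelian surface form a proper subgroup of the Hodge isometries, a Fourier--Mukai kernel need not preserve Gieseker stability (the paper must verify WIT and (semi)stability sheaf by sheaf for the two specific Yoshioka kernels it uses, in Propositions \ref{prop:yoshicambia} and \ref{lem:yoshiduale}), and such a transform only gives an isomorphism onto a moduli space with a \emph{different} Mukai vector or on the \emph{dual} surface, which becomes a monodromy operator only after composing with parallel transport along connected projective families of polarized abelian surfaces. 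Moreover ``letting $A$ vary'' cannot produce everything: families of abelian surfaces only yield $SO^{+}$-type subgroups (Section \ref{sec:abelian}), the relative moduli construction forces one to work with $v$-generic polarizations, so one first gets only the polarized groups $SO^{+}_{h}$ and must combine several polarizations (Lemma \ref{latticiancora}) and a wall-crossing comparison of $K_{v}(A,H)$ across chambers, shown to be a parallel transport operator via $2$-factoriality and the Lehn--Pacienza theorem (Proposition \ref{daabasing+}); and the determinant $-1$ operator comes from Yoshioka's dual-abelian-surface transform (Corollary \ref{det-1}), not from the MRS involution, which the paper never uses and for which you give no argument that it is a monodromy operator with the discriminant or determinant effect you need.

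The final ``lattice-theoretic assembly'' is likewise left as a surjectivity check one ``expects'' to hold, whereas this is a genuine step: the paper must first prove, by a nontrivial Eichler-criterion argument conjugating by the isometry $\varphi$ of diagram (\ref{cohom}), that the subgroups coming from abelian-surface monodromy generate all of $SO^{+}(w^{\perp})$ (Proposition \ref{prop:so_sing}), and then, after lifting $O^{+}(w^{\perp})$ to the resolution (where it acts trivially on $\epsilon_{v}$), carry out an explicit four-case computation showing that $O^{+}(w^{\perp})$ together with the single reflection $R_{\zeta+\epsilon}$ in the class of the $(-4)$, divisibility-$2$ prime exceptional divisor $\widetilde{B}$ (a \emph{divisorial}, not small, contraction --- Nagai's) generates $O^{+}(H^{2}(X,\ZZ))$. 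Your appeal to finite index in $O^{+}$ does not shortcut any of this, since finite index gives no control over which cosets are realized. So the proposal is a plausible plan sharing some ingredients with the paper (moduli-theoretic monodromy, Yoshioka-type transforms, Markman's prime exceptional reflections), but as written it proves neither the generation statement nor that the proposed isometries are in fact monodromy operators.
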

The  explanation of the definitions of $O^{+}$ and $C(H^2(Y,\ZZ))$ is given in Subsection \ref{ssec:lattice} and Remark \ref{positive cone}. Since every isometry of  
$H^{2}(X,\ZZ)$ is contained, up to sign, in $O^{+}(H^{2}(Y,\ZZ))$, every Hodge isometry $H^2(X,\ZZ)\cong H^2(Y,\ZZ)$ comes, up to sign, from parallel transport 
and the Classical Bimeromorphic Global Torelli Theorem holds for $OG6$ type manifolds.\\  
To contextualize our result on the monodromy group we  recall that monodromy groups of \hk manifolds of the deformation types of Beauville's examples were already known: the monodromy group  of manifolds of \kntiposp was computed by Markman \cite[Section 9]{mark_tor} and that of generalized Kummers by the first named author \cite{mon_mon} using fundamental results of Markman \cite{mark_mon}. On the other hand, the monodromy group of 
 manifolds of $OG10$ type (the deformation type of the ten dimensional O'Grady example) was still unknown when this paper first appeared, and has been recently computed by Onorati \cite{ono}.

Our two main applications follow directly from the computation of the monodromy group. 

By the work of  the first named author \cite{mon_kahl} and Markman
 \cite[Section 6]{mark_tor}, the K\"ahler and the Birational K\"ahler cones are cut out by wall divisors and stably prime exceptional divisors respectively. Using that these are two classes of monodromy invariant divisors, we  determine them by means of  explicit geometric constructions in specific examples of \hk manifolds of $OG6$ type in Proposition \ref{prop:wall}.

The key observation in the proof of Corollary \ref{cor:intro_lagr} is that primitive isotropic elements in $H^2(X,\ZZ)$ are in the same monodromy orbit. By fundamental results due to Markman (\cite[Section 5.3 and Lemma 5.17(ii)]{mar_prime}) and Matsushita (\cite[Theorem 1.2]{matsu_lagr}) this reduces the proof of the Hassett-Tschinkel, Hybrechts and Sawon conjecture for \hk manifolds of OG6 type to the existence of a lagrangian fibration in a single case.

The paper is structured as follows: in Section \ref{sec:prelim}, we introduce some basic facts about lattices and monodromy which will be used in the proofs. In Section \ref{sec:abelian}, we prove that parallel transport of complex tori of dimension two can all be obtained by considering families of projective tori. We believe this result is known to experts, but we could not find it in the literature. In Section \ref{sec:sing_mon}, we study the monodromy group of the Albanese fibre of a singular moduli spaces of semistable sheaves, using parallel transport along projective families and Fourier-Mukai transformations. The key result of this section is that this group is already maximal, see Proposition \ref{propsection} for details. In Section \ref{sec:monotutta}, we use the previous result to finish the computation of the Monodromy group. Finally, the last two sections are dedicated to applications: in Section \ref{sec:mov}, we give the structure of the K\"ahler and the birational K\"ahler cones for manifolds of $OG6$ type and in Section \ref{sec:lagr} we prove that the existence of divisors of square zero implies the existence of a birational lagrangian fibration with fibres of polarization type (1,2,2) and we prove Beauville's weak splitting property for these manifolds. 

\section*{Acknowledgements}
We are grateful to Francesco Esposito, Claudio Onorati and Giulia Saccà for useful discussions and to Kieran G. O'Grady for useful suggestions. This work, even when not stated explicitly, uses beautiful and fundamental results of Eyal Markman on monodromy groups, and we would like to acknowledge the importance of his ideas for our field in general and our work in particular. We would also like to thank the Referee for his insightful comments.\\ GM is supported by ``Progetto di ricerca INdAM per giovani ricercatori: Pursuit of IHS''. GM is member of the INDAM-GNSAGA and received support from it. AR acknowledges the MIUR Excellence Department Project awarded to the Department of Mathematics, University of Rome Tor Vergata, CUP E83C1800010000. Part of the results of this paper were proven or written during the second and fourth editions of the Japanese-European symposium on symplectic varieties and moduli spaces.

\section{Preliminaries}\label{sec:prelim} 
In this section, we will gather notation, definitions and some results concerning Lattices and Monodromy groups, which we will use in the following.
\subsection{Lattices: Notation and basic results} \label{ssec:lattice}
In this subsection, we will fix some notation concerning lattices and will recall the results which we will use in the following.
\begin{defn}\label{def1}
An even  lattice $L$ is a finitely generated  free $\mathbb{Z}$ module equipped with a non degenerate bilinear symmetric form $(\cdot,\cdot)$, with values in $\ZZ$, such that the associated quadratic form  takes only even values.

The discriminant group of $L$ is the finite abelian group $A_L:=L^\vee/L$  
and the discriminant form $q_{A_L}:A_L\rightarrow \mathbb{Q}/2\mathbb{Z}$ is the quadratic form induced by the bilinear form $(\cdot,\cdot)$ on $L$.

The divisibility $div_{L}(v)$ of an element $v\in L$ is  the positive generator of the ideal $(v,L)$, if no confusion can arise we simply denote it by $div(v)$.

\end{defn}
We are interested in certain subgroups of the group of isometries $O(L)$ of the even lattice $L$, when it is not negative definite.
To introduce them, we recall that the Grassmannian $\mathbb{G}r^+(L)$ parametrizing maximal positive definite subspaces of $L\otimes \mathbb{R}$
is contractible and the Grassmannian $\mathbb{G}r^{+,or}(L)$ parametrizing oriented maximal positive definite subspaces of $L\otimes \mathbb{R}$
has two connected components.

The  subgroups of  $O(L)$ we are interested in, are the following:
\begin{itemize}
\item $SO(L)$, the group of isometries of determinant one.
\item $O^+(L)$, the group of isometries acting trivially on the set of connected components of $\mathbb{G}r^{+,or}(L)$.
\item $SO^+(L):=SO(L)\cap O^+(L)$.
\item $\widetilde{O}(L)$ The group of isometries whose induced action on $A_L$ is trivial.
\item $S\widetilde{O}^+(L)=SO^+(L)\cap \widetilde{O}(L)$.
\end{itemize}

\begin{oss}\label{positive cone}
For  every even lattice $L$ that is not negative definite the positive cone of $L$ is   
$$C(L):=\left\{ v\in L\otimes \mathbb{R}|\; (v,v)>0 \right\}\subseteq L\otimes \mathbb{R}.$$ 
As shown in \cite[\S 4]{mark_tor}, for every maximal positive subspace $W\subseteq L\otimes \mathbb{R}$, 
the complement of the origin $W\setminus \left\{0\right\}$ is a deformation retract of $C(L)$: hence $C(L)$
has the Homotopy type of a sphere. The subgroup $O^+(L)$ is equivalently defined, in \cite[\S 4]{mark_tor}, as the subgroup of $O(L)$ acting trivially on the homology of $C(L)$, i.e. the subgroup of $O(L)$ preserving the orientation of the positive cone $C(L)$.
\end{oss}
All results of this subsection are well known to experts, and we use standard references for them.

Lemma \ref{lemma:estendi} is a folklore result, which allows to extend isometries of a sublattice to the ambient lattice. We will use it several times. As a preliminary step, we recall the following Lemma \ref{overlattice} characterizing finite index extensions of even lattices.
. 
\begin{lem}\label{overlattice}\cite[Prop. 1.4.1]{Nik80}
Let $\Lambda,N$ be two even lattices such that $\Lambda\subset N$ is a finite index extension. Then $N$ is naturally a subgroup of $\Lambda^\vee$ and $H_N:=N/\Lambda\subset A_\Lambda$ is an isotropic subgroup. Conversely, for every isotropic subgroup $H\subset A_\Lambda$ there exists a finite index extension $\Lambda\subset N$, unique up to isomorphism, such that $H=H_N$. 


\end{lem}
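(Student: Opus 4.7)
The plan is to set up the correspondence between finite-index even overlattices of $\Lambda$ and isotropic subgroups of $A_\Lambda = \Lambda^\vee/\Lambda$ by working inside the ambient $\mathbb{Q}$-vector space $\Lambda \otimes \mathbb{Q}$.

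First, given $\Lambda \subset N$, I would argue as follows: since $[N:\Lambda]$ is finite, $N \otimes \mathbb{Q} = \Lambda \otimes \mathbb{Q}$ and the bilinear forms extend the same $\mathbb{Q}$-valued pairing on this common rational space. For any $n \in N$ and $\lambda \in \Lambda$, the integer $(n,\lambda)$ defines a $\mathbb{Z}$-linear functional on $\Lambda$, which exhibits $n$ as an element of $\Lambda^\vee$. Hence we obtain the canonical chain $\Lambda \subset N \subset \Lambda^\vee$ and the quotient $H_N := N/\Lambda \subset A_\Lambda$. Isotropy of $H_N$ is immediate from the evenness of $N$: for every $n \in N$ one has $(n,n) \in 2\mathbb{Z}$, so $q_{A_\Lambda}$ vanishes on the class of $n$ modulo $2\mathbb{Z}$.

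For the converse, given an isotropic subgroup $H \subset A_\Lambda$, I would define $N$ as the preimage of $H$ under the projection $\Lambda^\vee \twoheadrightarrow A_\Lambda$ and equip it with the restriction of the $\mathbb{Q}$-valued form on $\Lambda^\vee$. The content is to show that this restricted form is in fact $\mathbb{Z}$-valued and even. Evenness of the quadratic form on $N$ is just the hypothesis $q_{A_\Lambda}|_H = 0$ in $\mathbb{Q}/2\mathbb{Z}$. For integrality of the bilinear form on the product $N\times N$, I would use the polarization identity
\[
2\, b_{A_\Lambda}(\bar x,\bar y) \;=\; q_{A_\Lambda}(\bar x+\bar y) - q_{A_\Lambda}(\bar x) - q_{A_\Lambda}(\bar y) \quad \text{in } \mathbb{Q}/2\mathbb{Z},
\]
which, for $\bar x, \bar y \in H$, forces $2 b_{A_\Lambda}(\bar x, \bar y) \in 2\mathbb{Z}$ and hence $b_{A_\Lambda}(\bar x, \bar y) \in \mathbb{Z}$; i.e.\ the bilinear form pairs $N$ with itself into $\mathbb{Z}$.

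Uniqueness is then automatic: any even overlattice $\Lambda \subset N' \subset \Lambda^\vee$ with $N'/\Lambda = H$ must coincide with the preimage of $H$ in $\Lambda^\vee$, hence equals $N$ as a subset of $\Lambda^\vee$. The one non-formal point is the polarization step, which converts an assumption about the \emph{quadratic} form modulo $2\mathbb{Z}$ into an assertion about the \emph{bilinear} form modulo $\mathbb{Z}$; here the evenness hypothesis is essential, and I expect this to be the main (though modest) obstacle in the argument.
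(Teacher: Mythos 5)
Your proposal is correct and follows the same route as the paper: embed $N$ in $\Lambda^\vee$ via $n\mapsto (n,\cdot)|_{\Lambda}$, deduce isotropy of $N/\Lambda$ from evenness of $N$, and recover $N$ in the converse direction as the preimage of $H$ in $\Lambda^\vee$. You merely spell out details the paper leaves implicit (the polarization identity giving $\mathbb{Z}$-valued pairing on the preimage, and uniqueness), and these verifications are accurate.
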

\begin{proof}
The inclusion  $N\subseteq \Lambda^\vee$ is given by the group morphism sending   an element $n\in N$  to the morphism of $\phi_{n}\in\Lambda^\vee$ such that $\phi_{n}(v)=(n,v)$ for $v\in \Lambda$.
The value of the discriminant form on $[n]$ is the value of the quadratic form on $n$ modulo $2\mathbb{Z}$, that is $0$ as $N$ is even. 
Conversely the even lattice $N$ can be obtained as the inverse image of $H$ in $\Lambda^\vee$.  
\end{proof}
\begin{lem}\label{lemma:estendi}
Let $L$ and $M$ be two lattices and let $L\oplus M\subset N$ be a finite order extension by an even lattice. 
Let $f\in \widetilde{O}(L)$ be an isometry acting trivially on $A_L$. 
Then, there exists an isometry $\overline{f}$ of $N$ such that $\overline{f}_{|L}=f$ and $\overline{f}_{|M}=Id_M$.
\end{lem}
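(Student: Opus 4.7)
The plan is to exploit Lemma \ref{overlattice}, which presents $N$ as the preimage in $(L\oplus M)^\vee$ of a canonical isotropic subgroup $H_N\subseteq A_{L\oplus M}=A_L\oplus A_M$, and to verify that the naive candidate isometry $f\oplus \mathrm{Id}_M$ already acts trivially on the discriminant group and therefore extends.

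More concretely, I would first consider $g:=f\oplus \mathrm{Id}_M$ as an isometry of the lattice $L\oplus M$, tensor with $\mathbb{Q}$ to view it as an isometry of the rational quadratic space $(L\oplus M)\otimes\mathbb{Q}=N\otimes\mathbb{Q}$, and observe that such a $\mathbb{Q}$-linear isometry automatically preserves the dual lattice $(L\oplus M)^\vee$. Thus $g$ induces an automorphism $\overline{g}$ of the discriminant group $A_{L\oplus M}$. Since $f\in\widetilde{O}(L)$ acts trivially on $A_L$ and $\mathrm{Id}_M$ acts trivially on $A_M$, the induced automorphism $\overline{g}$ on $A_L\oplus A_M$ is the identity.

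Next I invoke Lemma \ref{overlattice}: the finite index extension $L\oplus M\subseteq N$ is determined by an isotropic subgroup $H_N\subseteq A_{L\oplus M}$, and $N$ is exactly the preimage of $H_N$ under the quotient map $(L\oplus M)^\vee\twoheadrightarrow A_{L\oplus M}$. Since $\overline{g}$ acts as the identity on $A_{L\oplus M}$, it trivially fixes $H_N$ setwise, so $g$ preserves the preimage $N$. Restricting $g$ to $N$ gives the desired isometry $\overline{f}$, and by construction $\overline{f}|_L=f$ and $\overline{f}|_M=\mathrm{Id}_M$.

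There is really no serious obstacle here: everything reduces to the straightforward observation that the assumption $f\in \widetilde{O}(L)$ is precisely what one needs so that $f\oplus\mathrm{Id}_M$ descends to the identity on $A_{L\oplus M}$, after which Lemma \ref{overlattice} does the work. The only point that requires a modicum of care is the (standard) fact that a $\mathbb{Q}$-linear isometry of a rational quadratic space sends the dual of an integral lattice into itself, which is immediate from the definition of the dual via the bilinear form.
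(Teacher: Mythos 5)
Your proposal is correct and is essentially the paper's own argument in a slightly more structural packaging: both proofs extend $f\oplus \mathrm{Id}_M$ $\mathbb{Q}$-linearly, use the inclusion $N\subseteq L^\vee\oplus M^\vee$ coming from Lemma \ref{overlattice}, and exploit the triviality of the induced action on the discriminant group to conclude that $N$ is preserved. The only difference is presentational: the paper checks this element-by-element (writing $n=l/r+m/r$ and observing that $\overline{f}(n)-n\in L$), while you phrase it as the induced map being the identity on $A_{L\oplus M}$ and hence fixing $H_N$.
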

\begin{proof}
Since the order of the extension is finite, for all $n\in N\subseteq  L^{\vee} \oplus M^{\vee}$ there are $l\in L$, $m\in M$ and  an integer $r$ such that $rn=l+m$ or, equivalently, $n=l/r+m/r$ in $L^{\vee} \oplus M^{\vee}$.

Since $f\in \widetilde{O}(L)$, we have $[f(l)/r]=[l/r]$ in $A_L$: hence there exists $l_{0}\in L$ such that $f(l)/r=l/r+l_0$ in $L^{\vee}$. 
It follows that $f(l)/r+m/r\in N\subseteq  L^{\vee} \oplus M^{\vee}$ and, setting $\overline{f}(n):= f(l)/r+m/r$ for all $n\in N$, we get the desired isometry. 
\end{proof}
\begin{oss}\label{est2}
The above lemma will be often applied to the case where $f\in \widetilde{O}(L)$ is an isometry of a sublattice $L$ of $U^k$ and  $M$ has rank one:
if $M$ is generated by an element of square $2$ and $M$ is its perpendicular, the discriminant group 
$A_M$ has order two and the extension $\overline{f}\in O(U^k)$ always exists.
\end{oss} 

The following 
famous result of Eichler
allows to determine when two elements of a lattice are in the same orbit of the isometry group.
\begin{lem}\cite[Lemma 3.5]{GHS10}\label{lem:eichler}
Let $L'$ be an even lattice and let $L=U^2\oplus L'$. Let $v,w\in L$ be two primitive elements such that the following holds:
\begin{itemize}\renewcommand{\labelitemi}{$\bullet$}
\item $v^2=w^2$.
\item $[v/div(v)]=[w/div(w)]$ in $A_L$.
\end{itemize}
Then there exists an isometry $g\in S\widetilde{O}^+(L)$ such that $g(v)=g(w)$.
\end{lem}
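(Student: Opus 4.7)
The approach is the classical one: realize $g$ as a product of Eichler transvections. For $e\in L$ isotropic and $a\in e^{\perp}$, the Eichler transvection
$$E_{e,a}(x)=x+(x,e)\,a-(x,a)\,e-\tfrac{(a,a)}{2}(x,e)\,e$$
is an integral isometry of determinant one which acts trivially on $A_L$ (since $E_{e,a}(x)-x\in L$ even for $x\in L^{\vee}$, because $(x,e),(x,a)\in\mathbb{Z}$), and is homotopic to the identity via the deformation $a\rightsquigarrow ta$, so it preserves the orientation of the positive cone. Therefore every $E_{e,a}$ belongs to $S\widetilde{O}^{+}(L)$, and it suffices to connect $v$ to $w$ by a product of such transvections.

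The heart of the argument is a normal-form statement: any primitive $v\in L$ can be transformed by a product of Eichler transvections into a canonical vector depending only on $v^{2}$ and on $[v/div(v)]\in A_L$. Writing $L=U_1\oplus U_2\oplus L'$ with hyperbolic bases $\{e_i,f_i\}$ of $U_i$ and $v=a_1 e_1+b_1 f_1+a_2 e_2+b_2 f_2+v'$, the reduction would proceed in two stages. First, using transvections $E_{e_1,-}$ and $E_{f_1,-}$ with arguments in $L'\oplus U_2$, eliminate the $U_2$-component and replace $v'$ by a representative of minimal form in its class modulo $d\cdot L'$, where $d=div(v)$. Second, an $SL_2(\mathbb{Z})$-reduction on $(a_1,b_1)$, realized by suitable transvections in the subgroup of $S\widetilde{O}^{+}(L)$ generated by $E_{e_1,e_2},E_{f_1,e_2},E_{e_2,-},E_{f_2,-}$, normalizes the $U_1$-part to $d\,e_1+m f_1$, with $m$ determined by $v^{2}$ and by the residue class in $A_L$.

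Applied to both $v$ and $w$, this procedure yields elements $g_v,g_w\in S\widetilde{O}^{+}(L)$ (each a product of Eichler transvections) sending $v$ and $w$ to the \emph{same} canonical vector, since the hypotheses $v^{2}=w^{2}$ and $[v/div(v)]=[w/div(w)]$ force the two canonical forms to coincide. The composition $g:=g_w^{-1}g_v$ is then the desired isometry in $S\widetilde{O}^{+}(L)$ with $g(v)=w$.

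The main obstacle is the normalization itself: the square, the divisibility and the residue class in $A_L$ must all be controlled simultaneously, while each Eichler transvection allows only a narrowly constrained modification. It is precisely here that the hypothesis of \emph{two} hyperbolic planes is used: the second plane $U_2$ plays the role of a ``storage space'' permitting Euclidean-algorithm moves on the $U_1\oplus L'$-part without disturbing invariants already fixed. A single copy of $U$ would not suffice to realize the reduction in the full generality required, which is why the statement assumes $U^{2}\subset L$.
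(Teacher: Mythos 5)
Your argument follows exactly the route the paper takes: the paper simply cites \cite{GHS10} for the transitivity statement and observes, as you do, that the isometries used in that proof are Eichler transvections, which have determinant one (and, as you correctly verify, act trivially on $A_L$ and preserve the orientation of the positive cone), so that the conclusion can be upgraded from $\widetilde{O}^+(L)$ to $S\widetilde{O}^+(L)$. Your sketched normal-form reduction is precisely the content of the cited lemma, so the proposal is correct and essentially identical in approach to the paper's.
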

In the reference above, the result is stated for $g\in \widetilde{O}^+(L)$, but the proof uses a class of isometries, called Eichler's transvections, which have determinant one. Indeed, let $L$ be any indefinite lattice, let $e\in L$ be isotropic and let $a\in e^\perp$, the Eichler's transvection $t(e,a)$ with respect to $e$ and $a$ is defined as follows:
\begin{equation}\label{eq:eichler}
t(e,a)(v)=v-(a,v)e+(e,v)a-\frac{1}{2}(a,a)(e,v)e.
\end{equation}
In particular, we have the following three properties:
\begin{align}\label{eq:eichler_add1}
t(e,a)^{-1}&=t(e,-a)\\\label{eq:eichler_add2}
t(e,a)\circ t(e,b)&=t(e,a+b).\\\label{eq:eichler_add3}
g^{-1}\circ t(e,a)\circ g&=t(g(e),g(a)), 
\end{align}
where $g$ is any isometry.
If $L=U\oplus L_1$, we will denote with $E_U(L_1)$ the group of all Eichler's transvections $t(e,a)$ with $e\in U$ and $a\in L_1$.

Finally we recall two useful results giving a finite number of generators of isometry groups and concerning Eichler's transvections. 
\begin{lem}\label{lem:minieichler}\cite[Lemma 3.2]{GHS09}
Let $U^2$ be two copies of the hyperbolic lattice with standard basis $\{e_1,f_1,e_2,f_2\}$. Then $SO^+(U^2)$ is generated by $t(e_2,e_1),t(e_2,f_1),t(f_2,e_1)$ and $t(f_2,f_1)$.
\end{lem}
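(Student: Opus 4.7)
The plan is to identify $U^2$ with the lattice $M_2(\mathbb{Z})$ of integer $2\times 2$ matrices equipped with $2\det$ as quadratic form, realize $SO^+(U^2)$ as the quotient $(SL_2(\mathbb{Z})\times SL_2(\mathbb{Z}))/\{\pm(I,I)\}$ via the classical spin cover, and recognize each $T_i$ as a standard unipotent generator in one of the two $SL_2(\mathbb{Z})$-factors. Set
\[
\varphi\colon U^2 \longrightarrow M_2(\mathbb{Z}), \qquad ae_1 + bf_1 + ce_2 + df_2 \longmapsto \begin{pmatrix} a & -c \\ d & b \end{pmatrix},
\]
so that $(v,v)=2\det\varphi(v)$, and define $\rho\colon SL_2(\mathbb{Z})\times SL_2(\mathbb{Z}) \to SO^+(U^2)$ by $\rho(A,B)(M) := A\,M\,B^{-1}$; this is surjective with kernel $\{\pm(I,I)\}$, giving the integral form of the accidental isomorphism $\mathrm{Spin}(2,2)\cong SL_2\times SL_2$.

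Next, I would compute from formula (\ref{eq:eichler}) the action of each $T_i$ on $M=\varphi(v)$. A direct calculation on the basis $\{e_1,f_1,e_2,f_2\}$ gives, for $v=ae_1+bf_1+ce_2+df_2$, the identities $T_1(v)=v+de_1-be_2$, $T_2(v)=v+df_1-ae_2$, $T_3(v)=v+ce_1-bf_2$ and $T_4(v)=v+cf_1-af_2$, which translate under $\varphi$ into
\[
T_1 = \rho\!\left(\begin{pmatrix}1&1\\0&1\end{pmatrix},\,I\right),\qquad T_4 = \rho\!\left(\begin{pmatrix}1&0\\-1&1\end{pmatrix},\,I\right),
\]
\[
T_2 = \rho\!\left(I,\,\begin{pmatrix}1&-1\\0&1\end{pmatrix}\right),\qquad T_3 = \rho\!\left(I,\,\begin{pmatrix}1&0\\1&1\end{pmatrix}\right).
\]

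The conclusion is then immediate: $SL_2(\mathbb{Z})$ is generated by any upper- and lower-triangular unipotent pair such as $\bigl(\begin{smallmatrix}1&1\\0&1\end{smallmatrix}\bigr)$ and $\bigl(\begin{smallmatrix}1&0\\-1&1\end{smallmatrix}\bigr)$, so $\langle T_1,T_4\rangle$ maps onto the first $SL_2(\mathbb{Z})$-factor under $\rho$ and $\langle T_2,T_3\rangle$ onto the second; together with $-(I,I)\in\ker\rho$, this shows that the four $T_i$ generate all of $SO^+(U^2)$.

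The main obstacle is justifying the integral surjectivity of $\rho$: over $\mathbb{R}$ it is the classical accidental isomorphism, but one must check that every element of $SO^+(U^2)$ lifts to a pair in $SL_2(\mathbb{Z})^2$. This can be handled by verifying that $\rho(SL_2(\mathbb{Z})^2)$ already acts transitively on primitive isotropic vectors of $U^2$ (equivalently, on primitive rank-one matrices in $M_2(\mathbb{Z})$, where transitivity of $SL_2(\mathbb{Z})^2$ on column and row $\gcd$-one pairs is classical) and that the stabilizer of $e_1\leftrightarrow\bigl(\begin{smallmatrix}1&0\\0&0\end{smallmatrix}\bigr)$ inside $SO^+(U^2)$ is the abelian group of Eichler transvections $\{t(e_1,a):a\in\langle e_2,f_2\rangle\}$, all of which lie in $\rho(SL_2(\mathbb{Z})^2)$ by a direct check; orbit-stabilizer then gives the surjectivity.
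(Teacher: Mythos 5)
The paper never proves this lemma --- it is quoted verbatim from \cite[Lemma 3.2]{GHS09} --- so there is no internal argument to compare with; what you give is a genuine proof, and it is essentially the standard one behind the cited result. I checked the computational core and it is correct: with $\varphi(ae_1+bf_1+ce_2+df_2)=\bigl(\begin{smallmatrix}a&-c\\d&b\end{smallmatrix}\bigr)$ one has $(v,v)=2\det\varphi(v)$, the transvection formula \eqref{eq:eichler} gives exactly the four identities you state, and under $\varphi$ these become left/right multiplication by the elementary unipotents, so $\langle t(e_2,e_1),t(e_2,f_1),t(f_2,e_1),t(f_2,f_1)\rangle=\rho\bigl(SL_2(\ZZ)\times SL_2(\ZZ)\bigr)$; the lemma is thus equivalent to surjectivity of $\rho$, which you rightly isolate as the only real issue. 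Your orbit--stabilizer plan for that is sound, but the stabilizer step is the one place that must actually be written out: given $g\in SO^+(U^2)$ with $g(e_1)=e_1$, pairing with $e_1$ forces $g(f_1)=f_1+w-\tfrac{1}{2}(w,w)e_1$ for some $w\in\langle e_2,f_2\rangle$, so $t(e_1,w)^{-1}\circ g$ fixes $e_1$ and $f_1$ and restricts to one of the four isometries of $\langle e_2,f_2\rangle\cong U$; the swap and its negative have determinant $-1$, while $-\id$ on $\langle e_2,f_2\rangle$ reverses the orientation of the positive plane spanned by $e_1+f_1$ and $e_2+f_2$, so only the identity lies in $SO^+$, giving $\mathrm{Stab}_{SO^+(U^2)}(e_1)=\{t(e_1,a):a\in\langle e_2,f_2\rangle\}$, and each such $t(e_1,a)$ is again $\rho$ of an elementary pair by the same matrix computation. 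Together with the classical transitivity of $SL_2(\ZZ)\times SL_2(\ZZ)$ on content-one singular matrices $xy^{T}$ (equivalently on primitive isotropic vectors), this closes the argument, so your proposal is correct; compared with the paper's bare citation, it has the merit of being self-contained, at the cost of the routine verifications just indicated.
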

\begin{lem}\label{lem:supereichler}\cite[Prop. 3.3 (iii)]{GHS09}
Let $L:=U\oplus L_1$ be an even lattice. Then $O^+(L)$ is generated by $O^+(L_1)$ and $E_U(L_1)$.
\end{lem}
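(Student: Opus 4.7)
The plan is to reduce an arbitrary $g\in O^+(L)$, modulo composition with Eichler transvections in $E_U(L_1)$ and elements of $O^+(L_1)$ (acting trivially on $U$), to the identity. Fix a standard basis $\{e,f\}$ of $U$ with $(e,e)=(f,f)=0$ and $(e,f)=1$. Write a general vector of $L$ as $v=xe+yf+w$ with $x,y\in\mathbb{Z}$, $w\in L_1$.

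\textbf{Step 1 (transitivity on primitive isotropic vectors).} First I would show that $E_U(L_1)$ acts transitively on the set of primitive isotropic vectors of $L$. Using formula (\ref{eq:eichler}) together with $L_1\perp U$, one checks
\[
t(e,a)(v)=\bigl(x-(a,w)-\tfrac{y}{2}(a,a)\bigr)e+yf+(w+ya),
\]
\[
t(f,a)(v)=xe+\bigl(y-(a,w)-\tfrac{x}{2}(a,a)\bigr)f+(w+xa).
\]
Given a primitive isotropic $v$ (so $2xy=(w,w)$), an inductive argument on the pair $(x,y)$ — modelled on the Euclidean algorithm and using non-degeneracy of $L_1$ to choose suitable $a\in L_1$ — reduces $v$ first to the case $w=0$, then to $v=\pm e$, and finally the orientation constraint encoded in $E_U(L_1)\subset SO^+(L)$ singles out $v=e$. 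Applying this with $v=g(e)$ yields $h\in E_U(L_1)$ with $h(g(e))=e$.

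\textbf{Step 2 (fixing $f$).} After replacing $g$ by $h\circ g$, assume $g(e)=e$. From $(g(f),e)=(f,e)=1$ and $(g(f),g(f))=0$ one writes $g(f)=nf+a-\tfrac{1}{2}(a,a)e$ for some $a\in L_1$, and $(g(f),e)=1$ forces $n=1$. A direct computation with (\ref{eq:eichler}) gives $t(e,a)(e)=e$ and $t(e,a)(f)=f+a-\tfrac{1}{2}(a,a)e=g(f)$. Hence $t(e,a)^{-1}\circ g$ fixes $U$ pointwise.

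\textbf{Step 3 (restricting to $L_1$).} Any isometry of $L$ fixing $U$ pointwise preserves $U^\perp=L_1$ and thus restricts to $h'\in O(L_1)$. Taking a maximal positive-definite subspace of $L$ as $\langle e+f\rangle\oplus W$ with $W\subset L_1\otimes\mathbb{R}$ maximal positive-definite, and noting that the action on $\langle e+f\rangle$ is trivial, the $O^+$-condition on $L$ translates to $h'\in O^+(L_1)$. Combining the three steps expresses $g$ as a product of elements of $E_U(L_1)$ and $O^+(L_1)$.

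\textbf{Main obstacle.} The technical heart is Step 1: the inductive reduction of an arbitrary primitive isotropic vector to $e$ using only transvections of the form $t(e,a)$ and $t(f,a)$ with $a\in L_1$. One must handle carefully the degenerate cases $x=0$ or $y=0$ and verify that primitivity is preserved throughout, and also track the orientation of the positive cone to eliminate the sign ambiguity $\pm e$.
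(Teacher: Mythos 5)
The paper offers no proof of this lemma (it is quoted from GHS), so your proposal has to stand on its own. Steps 2 and 3 are correct: they show that the stabilizer of $e$ in $O^+(L)$ lies in the subgroup $G$ generated by $E_U(L_1)$ and $O^+(L_1)$. The genuine gap is Step 1, and it is not the merely technical difficulty you flag at the end. First, $E_U(L_1)$ is not transitive on primitive isotropic vectors of $L=U\oplus L_1$ in general: isometries preserve divisibility, and for $L_1=U(2)$ the lattice $L$ contains primitive isotropic vectors of divisibility $2$, which no isometry moves to $e$. More seriously, every element of $E_U(L_1)$ acts trivially on the discriminant group $A_L\cong A_{L_1}$, and $O^+(L_1)$ acts through its natural image in $O(A_{L_1})$, so all of $G$ acts on $A_L$ through that image, while $O^+(L)$ can act by more. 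Concretely, take $L_1=\langle -12\rangle$ with generator $a$: the vector $w=12e+12f+5a$ has $w^2=-12$ and $w^{\perp}\cong U$, so $a\mapsto w$ extends to an isometry of $L$, which may be chosen in $O^+(L)$; it acts on $A_L\cong \mathbb{Z}/12\mathbb{Z}$ as multiplication by $5\neq\pm 1$, whereas $O^+(L_1)=\{\pm\mathrm{id}\}$. Hence the statement you set out to prove is false for an arbitrary even $L_1$, and since your Steps 2--3 are correct, even the weaker claim you actually need (transitivity of $G$ on the $O^+(L)$-orbit of $e$) must fail in such examples; no refinement of the Euclidean reduction with a single hyperbolic plane can close this.

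The missing ingredient is a second hyperbolic plane. The cited result has to be read with the hypotheses under which it is proved and under which the paper applies it (here only $L=U^3$, $L_1=U^2$), and a correct proof must use a copy of $U$ inside $L_1$ in Step 1, exactly as in Eichler's criterion (the paper's Lemma \ref{lem:eichler} is stated for $U^2\oplus L'$) and in Lemma \ref{lem:minieichler}. Your own formulas show where one $U$ is not enough: if the $f$-coordinate $y$ of $v$ vanishes, $t(e,a)$ does not change the $L_1$-component $w$ at all and $t(f,a)$ changes it only by multiples of $xa$, so $w$ cannot in general be annihilated; with $L_1\supseteq U$ one can first move $w$ into that copy of $U$ (using $O^+(L_1)$ and Eichler-type transitivity inside $L_1$) and then finish with the rank-$4$ computation of Lemma \ref{lem:minieichler}. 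If you restate the lemma with such a hypothesis and rebuild Step 1 around the extra hyperbolic plane, the overall architecture (reduce $g(e)$ to $e$, then Steps 2--3) is the right one.
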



\subsection{Monodromy: definitions and facts}\label{ssec:monodromy}

In this subsection we recall the notions of parallel transport operator and monodromy groups.
\begin{defn}
A proper morphism $f\,:\,\mathcal{X}\rightarrow T$  between complex analytic spaces, is a proper  analytically  locally (on $\mathcal{X}$) trivial family
if every $x\in\mathcal{X}$ has an analytic neighborhood  isomorphic, over $T$, to a product of a neighborhood of $U_x$ of $x$ in its fibre $f^{-1}(f(x))$ and a neighborhood $V_{f(x)}$ of $f(x)\in T$.
\end{defn}
The main example of a proper analytically locally trivial family is given by a  proper smooth morphism between complex manifolds.
A proper  analytically locally trivial family $f\,:\,\mathcal{X}\rightarrow T$ is, locally on $T$, topologically trivial: 
hence for every $n\in\mathbb{N}$, the higher direct image  $R^nf_{*}(\ZZ)$ is locally constant.
In particular for every path $\gamma:[0,1]\rightarrow T$, the sheaf $\gamma^{*}(R^nf_{*}(\ZZ))$ is constant.
\begin{defn}\label{trasppar}
\begin{enumerate}
\item{Set $X:=f^{-1}(\gamma(0))$ and $X':=f^{-1}(\gamma(1))$, the parallel transport operator on $H^{n}$
associated with $f$ and $\gamma$ is the isomorphism
$$t^{n}_{\gamma,f}: H^n(X,\ZZ)\rightarrow  H^n(X',\ZZ)$$
induced between the stalks at $0$ and $1$ by the trivialization of $\gamma^{*}(R^nf_{*}(\ZZ))$.}
\item{A monodromy operator on $H^n(X,\ZZ)$ induced by $f$ is an isomorphism of the form  
$$t^{n}_{\gamma,f}: H^n(X,\ZZ)\rightarrow  H^n(X,\ZZ)$$ where  $\gamma$ is a loop ($\gamma(0)=\gamma(1)$).}
\item{The group of monodromy operators on $H^n(X,\ZZ)$ induced by $f$ is
$$Mon^{n}_{f}(X):=\left\{ t^{n}_{\gamma,f}|\; \gamma(0)=\gamma(1))\right\}.$$} 
\end{enumerate}\end{defn} 
By construction the parallel transport operator $t^{n}_{\gamma,f}$ only depends on the fixed endpoints homotopy class of $\gamma.$
The notions of parallel transport operator and monodromy operator   allow to introduce the monodromy  groups that we are intersted in.
\begin{defn}
\begin{enumerate}
\item{If $X$ is a compact K\"ahler manifold, the monodromy group $Mon^n(X)$ is the subgroup of $\Aut_{\ZZ-mod}(H^n(X,\ZZ))$
generated by the subgroups of the form $Mon^{n}_{f}(X)$ where $f:\mathcal{X}\rightarrow T$ is a proper smooth morphism whose fibres 
are compact K\"ahler manifolds and $X$ is a fiber of $f$.}
\item{If $X$ is a projective manifold, the projective monodromy group  $Mon^n(X)^{pr}$ is the subgroup of $\Aut_{\ZZ-mod}(H^n(X,\ZZ))$
generated by the subgroups of the form $Mon^{n}_{f}(X)$ where $f:\mathcal{X}\rightarrow T$ is a projective smooth morphism
between algebraic varieties 
having $X$ as a fibre.}
\item{If $X$ is a singular projective variety, the locally trivial projective monodromy group $Mon^n(X)^{pr}_{lt}$ is the subgroup of $\Aut_{\ZZ-mod}(H^n(X,\ZZ))$
generated by the subgroups of the form $Mon^{n}_{f}(X)$ where $f:\mathcal{X}\rightarrow T$ is a proper analytically locally trivial family whose fibers are projective varieties and $X$ is a fiber of $f$.}
\end{enumerate}\end{defn}

We will be interested in the special case of the group $Mon^2(X)$, where either $X$ is a \hk manifold
or $X$ is a projective variety admitting a resolution by a \hk manifold: in the latter, $X$ is a singular simplectic variety 
such that  $H^1(\mathcal{O}_{X})=0$ admitting a unique, up to scalar, holomorphic two form on the smooth locus, i.e. $X$ is a projective primitive symplectic variety according to \cite[Definition 3.1]{bakkerlehn} or a projective Namikawa symplectic variety according to \cite[Definition 2.18]{perult}.
In both cases, $H^2(X,\ZZ)$ has  a pure Hodge structure with a compatible deformation invariant quadratic form $B_X$, the Beauville-Bogomolov form in the smooth case and the Beauville-Bogomolov-Namikawa form in the singular case (see \cite{nami_sing}).
The deformation invariance of $B_X$ implies that, $Mon^{2}(X)$ in the smooth case or in $Mon^{2}(X)^{pr}_{lt}$ in the singular case,
actually lies in  $O(H^2(X,\ZZ))$, where the lattice structure is given by $B_X$.

For a \hk manifold $X$ or  a projective primitive symplectic variety, the group $O^+(H^2(X,\ZZ))$ coincides with the group of isometries preserving the two components of the cone of the classes in $H^{1,1}(X,\mathbb{R})$ having strictly positive Beauvill-Bogomolov(-Namikawa) square (see \cite[\S 4]{mark_tor}). As a K\"ahler class in the smooth case, or an ample class in the singular projective case, gives a preferred component of this cone, we have the following fundamental constraint on the monodromy groups that we are going to study:
\begin{lem}\label{o+}
\begin{enumerate}\item{ $Mon^2(X)\subseteq O^+(H^2(X,\ZZ))$ for every  \hk manifold $X$.}
\item{$Mon^2(X)^{pr}_{lt}\subseteq O^+(H^2(X,\ZZ))$ for every projective primitive symplectic variety $X$.}
\end{enumerate}
\end{lem}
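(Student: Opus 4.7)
The plan is to use the characterization in Remark \ref{positive cone} of $O^{+}(L)$ as the group of isometries of $L$ preserving the orientation of the positive cone $C(L)$, equivalently the two connected components of the Grassmannian $\mathbb{G}r^{+,or}(L)$ of oriented maximal positive definite subspaces of $L\otimes\mathbb{R}$. For a \hk manifold $X$, the ordered basis $(\mathrm{Re}(\sigma),\mathrm{Im}(\sigma),\omega)$, where $\sigma$ is a holomorphic symplectic form and $\omega$ is a K\"ahler class, spans a canonical oriented maximal positive 3-plane in $H^{2}(X,\mathbb{R})$; for a projective primitive symplectic variety one replaces the K\"ahler class by an ample class. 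The strategy is to transport this oriented 3-plane continuously along any loop in the relevant family and conclude that the parallel transport preserves its connected component.

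Let $f:\mc{X}\to T$ be a family of the required type, $X=f^{-1}(t_{0})$, and $\gamma:[0,1]\to T$ a loop based at $t_{0}$. The local constancy of $R^{2}f_{*}\ZZ$ gives a trivialization of $\gamma^{*}R^{2}f_{*}\mathbb{R}$ by isometries $\tau_{s}:H^{2}(X,\mathbb{R})\to H^{2}(X_{\gamma(s)},\mathbb{R})$ with $\tau_{0}=\id$ and $\tau_{1}=t^{2}_{\gamma,f}$. Variational Hodge theory yields a continuous family $\sigma_{s}$ of holomorphic symplectic forms on the fibres, and the key technical step is the construction of a continuous family $\omega_{s}$ of K\"ahler (resp.\ ample) classes: in case (1) this is standard, patching local smooth families of K\"ahler forms via a partition of unity on $[0,1]$ and exploiting convexity of the K\"ahler cone; in case (2) the analytic local triviality of $f$ gives product neighborhoods around each point of $\gamma([0,1])$ on which an ample class on the central fibre extends to a locally constant ample class, and the same partition of unity argument then applies. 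The 3-planes $W_{s}:=\mathrm{span}(\mathrm{Re}(\sigma_{s}),\mathrm{Im}(\sigma_{s}),\omega_{s})$ are positive definite (each vector has positive Beauville--Bogomolov(-Namikawa) square and $\omega_{s}$ is orthogonal to the positive 2-plane spanned by $\mathrm{Re}(\sigma_{s})$ and $\mathrm{Im}(\sigma_{s})$) and maximal, so they define oriented elements of $\mathbb{G}r^{+,or}(H^{2}(X_{\gamma(s)},\mathbb{R}))$ varying continuously with $s$.

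Pulling back, $V_{s}:=\tau_{s}^{-1}(W_{s})$ is a continuous path in $\mathbb{G}r^{+,or}(H^{2}(X,\mathbb{R}))$ from $V_{0}=W_{0}$ to $V_{1}=(t^{2}_{\gamma,f})^{-1}(W_{1})$, so these two oriented planes lie in the same component. Moreover, $W_{0}$ and $W_{1}$ themselves lie in the same component of this oriented Grassmannian: $\sigma_{0}$ and $\sigma_{1}$ are both holomorphic symplectic forms on the same manifold $X=X_{\gamma(0)}=X_{\gamma(1)}$, so they differ by a nonzero complex scalar which induces a rotation (with positive determinant) of the 2-plane $\mathrm{span}(\mathrm{Re}(\sigma),\mathrm{Im}(\sigma))$; and $\omega_{0},\omega_{1}$ are K\"ahler (resp.\ ample) classes on $X$, connected by a straight line in the convex K\"ahler/ample cone. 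Combining, $(t^{2}_{\gamma,f})^{-1}(W_{1})$ and $W_{1}$ lie in the same component, so $t^{2}_{\gamma,f}$ preserves components of $\mathbb{G}r^{+,or}$, i.e.\ $t^{2}_{\gamma,f}\in O^{+}(H^{2}(X,\ZZ))$. The main obstacle is the construction of the continuous family $\omega_{s}$: the smooth case is routine, but in the singular case one must carefully exploit the product neighborhoods provided by analytic local triviality to obtain locally constant ample classes suitable for patching.
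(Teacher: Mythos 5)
Your part (1) is correct and is the standard argument the paper implicitly invokes via \cite[\S 4]{mark_tor}: Kodaira--Spencer stability of K\"ahler forms, a partition of unity on $[0,1]$ and convexity of the K\"ahler cone give the continuous family $\omega_s$, and your orientation bookkeeping with the oriented $3$-planes $(\re(\sigma_s),\im(\sigma_s),\omega_s)$ is fine.

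The gap is in part (2), at the step where you claim that ``the analytic local triviality of $f$ gives product neighborhoods around each point of $\gamma([0,1])$ on which an ample class on the central fibre extends to a locally constant ample class.'' First, local triviality in the paper's sense is local on the total space, not on the base: the product neighborhoods are neighborhoods of points of a fibre and do not glue to an isomorphism of nearby fibres with the central one, so they give no comparison of ample cones. More seriously, the asserted statement is false in general: already for a smooth proper family with projective fibres (such a family is analytically locally trivial at every point of the total space, and its fibres are admissible in part (2) when they are projective hyperk\"ahler manifolds), the flat extension of an ample class on a special fibre need not remain of type $(1,1)$ on nearby fibres, let alone ample --- take a family of projective K3 or hyperk\"ahler manifolds inside a Noether--Lefschetz locus along which the Picard rank jumps at the central point; an ample class of the central fibre not proportional to the class that stays algebraic does not extend. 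Hence your construction of $\omega_s$, and with it the continuity of the distinguished orientation, breaks down in the singular projective case. What is actually needed there is the analogue of the Kodaira--Spencer input you used in (1): projective fibres are compact K\"ahler spaces, and one must know that K\"ahler classes (rather than ample classes) can be chosen continuously along an analytically locally trivial family, equivalently that the component of the positive cone of $H^{1,1}(X_t,\mathbb{R})$ containing the ample classes varies continuously with $t$. This is a genuine, nontrivial input, available for locally trivial families of primitive symplectic varieties (see \cite{bakkerlehn}); it is exactly the point your proposal elides, and it is the same point the paper leaves implicit when it justifies the lemma by citing \cite[\S 4]{mark_tor} and the existence of an ample class on each fibre.
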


\section{Monodromy of Abelian surfaces}\label{sec:abelian}
In this section we study the monodromy group $Mon^2(A)$ on the second cohomology for an
abelian surface $A$. The elements of $Mon^2(A)$ will be used in the next section to induce monodromy operators on the Albanese fibres of moduli spaces of sheaves on abelian surfaces.
It is well known, essentially already contained in \cite{shioda}, that  $$Mon^2(A)=SO^+(H^{2}(A,\mathbb{Z})),$$ where the lattice structure on  
$H^{2}(A,\mathbb{Z})$ is given by the intersection form.
We recall that $H^{2}(A,\mathbb{Z})= \Lambda^2 H^1(A,\mathbb{Z})\cong U^3$ and  we have an isomorphism $$Mon^2(A)\simeq SO^+(U^{3}).$$

Unfortunately we cannot use directly this result in the next section to induce monodromy operators on Albanese fibres of moduli spaces of sheaves 
because of the absence of a satisfactory theory of moduli spaces of sheaves on non necessarily  projective surfaces.
  
We need to prove a more precise result stating that the whole  $Mon^2(A)$
comes from compositions of monodromy operators induced by projective families of polarized abelian surfaces. 
A polarized abelian surface of degree $2d\in\mathbb{N}\setminus\left\{0\right\}$ is a pair $(A,h)$, where $A$
is an abelian surface and $h\in H^{2}(A,\ZZ)\cap H^{1,1}(A)$ is an indivisible  class represented by an ample line bundle $H$
on $A$ such that $H^{2}=2d$.
 
In the following Proposition \ref{prop:piatesko1} and Corollary \ref{cor:piatesko2}, for every $d\in\mathbb{N}\setminus\left\{0\right\}$ we analyze monodromy operators coming from a specific  family 
\begin{equation}\label{intro}
f_{2d}:\mathcal{A}_{2d}\rightarrow T_{2d}
\end{equation}  
of polarized abelian surfaces of degree $2d$  obtained  
as follows.

Since for every  ample line bundle $H$ of degree $2d$ on an abelian surface $A$ the line bundle $3H$ is always very ample \cite[Theorem 4.5.1]{Birkenhake-Lange} and, by Riemann-Roch and Kodaira vanishing, the linear system $|3H|$ has dimension $m=9d-1$,
the  polarized abelian surfaces with a primitive polarization of degree $2d$  can all be embedded in a fixed projective space 
$\mathbb{P}^{m}$.

It follows that there exists a Zariski  open subset $T_{2d}$ 
of the  Hilbert scheme parametrizing subschemes of $\mathbb{P}^{m}$ with Hilbert polynomial $P(x)=9dx^2$ 
such that, for every $t\in T_{2d}$, the corresponding subscheme $A_{t}\subset \mathbb{P}^{m}$ is an abelian surface  equipped with an ample line bundle $H_t$ whose cohomology class $h_t\in H^{2}(A_t,\mathbb{Z})$ is indivisible and such that the complete linear system $|3H_t|$ is the linear system of the  hyperplane sections on  $A_t$. Moreover every polarized abelian surface of degree $2d$ is isomorphic to
$(A_t,h_t)$ for some $t\in T_{2d}$. 
By definition 
$f_{2d}:\mathcal{A}_{2d}\rightarrow T_{2d}$ is the base change over $T_{2d}$ of the universal family of the previous Hilbert scheme.

\begin{oss}\label{T2D} The scheme  $T_{2d}$ is a  smooth connected variety of dimension $(m+1)^{2}+2$. 

In fact, since the moduli space of polarized abelian surfaces has dimension $3$ and $PGL(m+1)$ acts on $T_{2d}$ with finite stabilizers, for every $t\in T_{2d}$ the dimension of $T_{2d}$ at $t$ is at least  $(m+1)^{2}+2$. On the other hand, the tangent space of $T_{2d}$ at $t$ is the space of sections of the normal bundle 
$N_t$  to $A_t$ in $\mathbb{P}^{m}$ and, letting $T_{A_{t}}$ be the tangent bundle of $A_t$ and  $T_{\mathbb{P}^{m}|A_{t}}$ be the restriction on $A_t$ of the tangent bundle of $\mathbb{P}^m$, we have the exact sequence 
$$0\rightarrow H^{0}(T_{A_{t}})\rightarrow H^{0}(T_{\mathbb{P}^{m}|A_{t}})
\rightarrow H^{0}(N)\rightarrow H^{1}(T_{A_{t}}).$$
Since the image of the last map is contained in the  codimension one  subspace of first order deformations of $A_{t}$ where $h_t$ stays algebraic, we deduce the inequality
$h^{0}(N)\le h^{1}(T_{A_{t}})-1+h^{0}(T_{\mathbb{P}^{m}|A_{t}})- h^{0}(T_{A_{t}})$.
Since $H^i(\mathcal{O}_{\mathbb{P}^{m}}(1)_{|A_{t}})\simeq H^i(\mathcal{O}_{\mathbb{P}^{m}}(1)$ for $i=1,2$
the long exact sequence in cohomology associated with the restriction of the Euler exact sequence of $\mathbb{P}^m$ to $A_t$ gives 
$h^{0}(T_{\mathbb{P}^{m}|A_{t}})=h^{0}(T_{\mathbb{P}^{m}})+ h^{1}(\mathcal{O}_{A_{t}})$, hence 
$$h^{0}(N)\le h^{1}(T_{A_{t}})-1+h^{0}(T_{\mathbb{P}^{m}})+ h^{1}(\mathcal{O}_{A_{t}})- h^{0}(T_{A_{t}})= (m+1)^{2}+2$$
and $T_{2d}$ is a smooth variety of dimension $(m+1)^{2}+2$.

Connectedness holds because the  moduli space of polarized abelian surfaces of degree $2d$ is connected and points in 
$T_{2d}$ representing isomorphic polarized abelian varieties of degree $2d$ belong to the same $PGL(m+1)$ orbit.
\end{oss}  

Since $f_{2d}:\mathcal{A}_{2d}\rightarrow T_{2d}$ is a family of complex tori its monodromy is completely determined 
by its action on the first cohomology group and we first analyze this action. 
Recall that a polarized abelian surface $(A,h)$ of degree $2d$ is naturally equipped with
an integral, bilinear, non degenerate alternating form $$F_{h}:H^{1}(A,\mathbb{Z})\times H^{1}(A,\mathbb{Z})\rightarrow \mathbb{Z}$$ defined by $$F_{h}(\beta_1,\beta_2)=\int_{A}h\wedge\beta_1\wedge\beta_2$$ for $\beta_{i}\in H^{1}(A,\mathbb{Z})$. 
Naturality of $F_{h}$ implies that monodromy operators on $H^{1}(A,\mathbb{Z})$ associated with  polarized deformations of  abelian surfaces of degree $2d$ always belong to the group 
$$Sp(H^1(A,\ZZ),F_{h}):=$$$$\left\{\psi\in GL(H^1(A,\ZZ)): F_{h}(\psi(\beta_1),\psi(\beta_2))=F_{h}(\beta_1,\beta_2),\; \forall \beta_i\in 
H^{1}(A,\ZZ) \right\}$$ of the automorphisms of $H^1(A,\ZZ)$ preserving $F_h$.
The following proposition shows that the monodromy group of the polarized family of abelian surfaces $f_{2d}$ is as big as possible. 

\begin{prop}\label{prop:piatesko1}
Suppose that  $0$ is a point of $T_{2d}$ and set  $A:=A_0$  and $h:=h_0$. Then
$$Mon^{1}_{f_{2d}}(A)=Sp(H^1(A,\ZZ),F_{h}).$$
\end{prop}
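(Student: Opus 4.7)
The plan is to show the two inclusions separately. The inclusion $Mon^1_{f_{2d}}(A) \subseteq Sp(H^1(A,\ZZ), F_h)$ is immediate from naturality: the class $h_t \in H^2(A_t,\ZZ)$ is, by construction, one third of the restriction to $A_t$ of the hyperplane class of $\mathbb{P}^m$, hence it extends to a globally defined class on $\mathcal{A}_{2d}$ and is preserved by every parallel transport operator associated with $f_{2d}$; consequently the bilinear form $F_h$ it induces on $H^1$ is monodromy invariant.

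For the reverse inclusion, the plan is to reduce to the classical period description of the moduli space of polarized abelian surfaces of type $(1,d)$. After fixing a basis of $H^1(A,\ZZ)$ with respect to which $F_h$ becomes the standard symplectic form of type $(1,d)$, the group $Sp(H^1(A,\ZZ),F_h)$ is identified with the paramodular group $\Gamma_{1,d}$. The Siegel upper half-plane $\mathbb{H}_2$ carries a tautological family of marked polarized abelian surfaces of this type, descending to an (orbifold) family over $\Gamma_{1,d}\backslash \mathbb{H}_2$ whose monodromy on $H^1$ realizes the full $\Gamma_{1,d}$. Concretely, given $\psi\in \Gamma_{1,d}$ and a period $\tau\in \mathbb{H}_2$ of $(A,h)$, any path in $\mathbb{H}_2$ from $\tau$ to $\psi(\tau)$ projects to a loop in $\Gamma_{1,d}\backslash \mathbb{H}_2$ whose $H^1$ monodromy---after the identifications coming from the marking at $\tau$ and from $\psi$ at $\psi(\tau)$---is exactly $\psi$.

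The main step is then to transfer this realization inside $T_{2d}$. Consider the classifying map $c\colon T_{2d}\to \Gamma_{1,d}\backslash \mathbb{H}_2$. Its fibres, up to a finite quotient by $\Aut(A,h)$, are copies of $PGL(m+1)$, parametrizing the projective embeddings of a fixed $(A,h)$ via the complete linear system $|3H|$; in particular they are path-connected. Given $\psi\in \Gamma_{1,d}$, I would lift a path in $\mathbb{H}_2$ from $\tau$ to $\psi(\tau)$ to a path in $T_{2d}$ by choosing a continuously varying basis of the sections of $|3H|$ along the path, and then close the resulting path to a loop in $T_{2d}$ by means of a path in the connected group $PGL(m+1)$ joining the two endpoints, which lie in a common fibre of $c$.

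The main technical obstacle will be verifying that the $PGL(m+1)$-closing contributes trivially to the $H^1$ monodromy, so that the final loop $\gamma$ indeed satisfies $Mon^1_{\gamma,f_{2d}}=\psi$. This should follow from the observation that a path inside a single fibre of $c$ moves only the projective embedding and not the underlying polarized abelian surface: the induced identification on $H^1(A,\ZZ)$ along such a path is the identity. Once this is checked, every $\psi\in \Gamma_{1,d}$ is realized as the $H^1$ monodromy of some loop in $T_{2d}$, and the two inclusions together give $Mon^1_{f_{2d}}(A)=Sp(H^1(A,\ZZ),F_h)$.
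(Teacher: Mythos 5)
Your two inclusions match the paper's, and your strategy for the hard one --- compare $T_{2d}$ with the period domain $\mathbb{H}_2$ of $(1,d)$-polarized abelian surfaces and absorb the remaining ambiguity into the connected group $PGL(m+1)$ --- is in substance the paper's argument; the paper simply packages it as a covering-space statement. It forms the \'etale cover $T'\to T_{2d}$ whose points are pairs $(t,\zeta)$ of a point of $T_{2d}$ and a symplectic marking $\zeta$ of $H^1(A_t,\ZZ)$, observes that the inclusion $Sp(H^1(A,\ZZ),F_h)\subseteq Mon^1_{f_{2d}}(A)$ is equivalent to connectedness of $T'$, and proves connectedness by exhibiting the period map $P\colon T'\to D\cong\mathbb{H}_2$ as a surjection onto a connected base whose fibres are single (hence connected) $PGL(m+1)$-orbits.

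The one step of your loop-by-loop version that is not correct as stated is the claim that a path inside a fibre of $c$ induces the identity on $H^1(A,\ZZ)$. Parallel transport along such a path is induced by the projective-linear isomorphism of embedded surfaces joining its endpoints (lift the path to $PGL(m+1)$ and use the action to trivialize the family over it); this is an isomorphism of polarized abelian surfaces and need not act trivially on $H^1$: for instance a loop in the fibre lifting to a path from the identity to a projectivity inducing $-1$ on $A$ has monodromy $-\mathrm{id}$. So, as written, your closed loop realizes $\psi$ only up to composition with the image of $\Aut(A,h)$ in $Sp(H^1(A,\ZZ),F_h)$. This is repairable: choose the closing element of $PGL(m+1)$ to be the isomorphism $A_{\psi(\tau)}\cong A_\tau$ furnished by the $\Gamma_{1,d}$-action, composed with a translation so that it matches the actual restrictions of $\mathcal{O}_{\mathbb{P}^m}(1)$ to the two surfaces and not merely their cohomology classes (translations act trivially on $H^1$, so the induced map is unchanged); alternatively, observe that the error terms are themselves monodromies of loops contained in the fibre and can be cancelled. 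This translation twist is exactly the point the paper also has to handle when it identifies the fibres of $P$ with $PGL(m+1)$-orbits, and passing to the marked cover $T'$ is what lets the paper dispense with your closing-path bookkeeping altogether.
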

\begin{proof}
Since $f_{2d}:\mathcal{A}_{2d}\rightarrow T_{2d}$ is the restriction of the universal family of a Hilbert scheme
parametrizing subschemes of $\mathbb{P}^m$, the line bundle $\mathcal{O}_{\mathbb{P}^m}(1)$ induces a line bundle $\mathcal{L}$ on 
$\mathcal{A}_{2d}$ and, by construction, the cohomology class of the restriction $[\mathcal{L}_{|A}]$ of $\mathcal{L}$ to $A$ is $3h\in H^2(A,\ZZ)$. It follows that $h$ is invariant under the action of 
$Mon^{2}_{f_{2d}}(A)$: hence $Mon^{1}_{f_{2d}}(A)\subseteq Sp(H^1(A,\ZZ),F_{h})$.

To prove the opposite inclusion we consider the monodromy action of the 
fundamental group $\pi_{1}(T_{2d},0)$  on $Sp(H^1(A,\ZZ),F_{h})$ by composition on the left and  
let $e:T'\rightarrow T_{2d}$ be the \'etale cover of the connected complex manifold $T_{2d}$ 
induced by this right action. 
More explicitly,  $T'$ is in natural bijective correspondence with the set of pairs $(t,\zeta)$ where $t\in T_{2d}$ and $\zeta:  H^{1}(A,\mathbb{Z})\rightarrow H^{1}(A_t,\mathbb{Z})$
is an isomorphism sending   the alternating bilinear form 
$F_{h_{t}}:H^{1}(A_t,\mathbb{Z})\times H^{1}(A_t,\mathbb{Z})\rightarrow \mathbb{Z}$  associated with the polarized abelian surface 
$(A_t,h_t)$ to $F_h$. 
 
By construction, the inclusion $ Sp(H^1(A,\ZZ),F_{h})\subseteq Mon^{1}_{f_{2d}}(A)$ holds if and only if $T'$ is connected.

Connectedness of $T'$ will be shown by analyzing the relevant period map for the corresponding family of abelian surfaces.
In order to define the desired period map we recall that, since $(A,h)$ is a polarized abelian surface of degree $2d$,  
there exists an isomorphism $\iota:\mathbb{Z}^{4} \rightarrow H^{1}(A,\mathbb{Z})$ sending  $F_h$ to 
the integral, bilinear, non degenerate, alternating form $F:\mathbb{Z}^{4}\times \mathbb{Z}^{4}\rightarrow \mathbb{Z}$ whose associated matrix is  
$$\begin{vmatrix}
0&0&1&0\\
0&0&0&d\\
-1&0&0&0\\
0&-d&0&0\\
\end{vmatrix}.$$

Let $\mathbb{G}r^{F}(2,4)$ be the lagrangian Grassmannian parametrizing 
$2$-dimensional complex vector spaces in $\mathbb{C}^4$ that are isotropic with respect to the $\mathbb{C}$-linear extension $F_{\mathbb{C}}$ of $F$ and set 
$$D:=\left\{ V\in \mathbb{G}r^{F}(2,4):\; V\cap\overline{V}=0\;\&\; 
iF_{\mathbb{C}}(v,\overline{v})>0\; \forall v\in V \right\}.$$
By Hodge theory (see subsection  7.1.2 of \cite{VoisinI}) the subspace $H^{1,0}(A_{t})\subset H^{2}(A_{t},\mathbb{C})$
is lagrangian  with respect to the $\mathbb{C}$-linear extension $F_{h_t,\mathbb{C}}$ of $F_{h_t}$ and positive definite with respect to the hermitian form associated with $F_{h_t}$.  Since $\zeta\circ\iota$ sends $F_{h_t}$ to $F$,  the subspace  
$(\zeta\circ\iota)^{-1}(H^{1,0}(A_{t}))$ belongs to $D$ for every $t\in T_{2d}$. Hence there exists a 
holomorphic period map $P:T'\rightarrow D$  defined by $$P(t,\zeta):=(\zeta\circ\iota)^{-1}(H^{1,0}(A_{t})).$$ 

Since every $V\in D$ defines a polarized abelian surface of degree $2d$ whose underlying complex torus  $A_{V}:=\frac{\mathbb{C}^{4\vee}}{\overline{V}^{\vee}+\mathbb{Z}^{4\vee}}$ is  equipped with an identification $H^{1}(A_{V},\mathbb{Z})\simeq \mathbb{Z}^4$ sending $H^{1,0}$ to $V$, the holomorphic map $P$ is surjective.

Moreover the fibres of $P$ are irreducible  of  dimension $(m+1)^2-1$. More precisely the  action of the connected group $PGL(m+1)$ on the manifold $T_{2d}$ lifts to an action on the manifold $T'$: the lifted action is defined by setting, $g(t,\zeta):=(gt,g_*\circ\zeta)$ where  $g_{*}:H^1(A_t,\mathbb{Z})\rightarrow H^1(A_{gt},\mathbb{Z})$ is the isomorphism induced by the restriction of $g\in PGL(m+1)$ to a morphism from
$A_{t}$ to $A_{gt}$.

Since $g_*$ is a morphism of Hodge structures, points of $T'$ in the same $PGL(m+1)$ orbit have the same image in $D$. 
Viceversa, $P(t_1,\zeta_1)=P(t_2,\zeta_2)$ implies that 
$\zeta_2\circ\zeta_{1}^{-1}:H^1(A_{t_{1}},\mathbb{Z})\rightarrow 
H^1(A_{t_{2}},\mathbb{Z})$ is an isomorphism of integral polarized Hodge structures: hence it comes from an isomorphism $A_{t_{2}}\simeq A_{t_{2}}$ sending $h_{t_1}$ to $h_{t_2}$. Composing with translations, we may also assume that this isomorphism sends $\mathcal{O}_{\mathbb{P}^m}(1)_{|A_{t_1}}$ to $\mathcal{O}_{\mathbb{P}^m}(1)_{|A_{t_2}}$, i.e it is the restriction of an element of $PGL(m+1)$. We conclude that the fibres of $P$ are irreducible because they are the orbits of the $PGL(m+1)$-action on $T'$.
The dimension of every fibre is $(m+1)^2-1$, since the group of automorphisms of an abelian surface fixing an ample line bundle is finite,  hence $PGL(m+1)$ acts with finite stabilizers on $T_{2d}$.

Since $T'$ is an  \'etale cover of  $T_{2d}$, by Remark \ref{T2D} it is a complex manifold  of pure dimension $(m+1)^2+2$ and, since $P:T'\rightarrow D$ is surjective with connected equidimensional fibres, $T'$ is connected if $D$ is connected. Finally the  period domain $D$ is connected because it is isomorphic to the Siegel upper half space $\mathbb{H}_2$ parametrizing $2\times 2$ complex symmetric matrices with positive definite imaginary part.
\end{proof}

\begin{oss} The Plucker embedding identifies $D$ with one of the two components of the period domain $\Delta_{\alpha}:=\left\{[l]\in \mathbb{P}(U^3\otimes \mathbb{C}):\; l\cdot \alpha =0,\;
l\cdot l=0 \;\&\; l\cdot \overline{l}>0 \right\}$ for the Hodge structure on $H^{2}(A,\ZZ)$.
\end{oss}

As a consequence we obtain the group of monodromy operators of $f_{2d}$ on second cohomology groups.
In order to state the result, for every polarized abelian surface $(A,h)$ of degree $2d$ set
$$SO^+_{h}(H^{2}(A,\ZZ)):=\{\varphi\in SO^+(H^{2}(A,\ZZ))\,|\,\varphi(h)=h\}.$$

\begin{cor}\label{cor:piatesko2}
Suppose that  $0$ is a point of $T_{2d}$ and set  $A:=A_0$  and $h:=h_0$. Then
$$Mon^{2}_{f_{2d}}(A)=SO^+_{h}(H^{2}(A,\ZZ)).$$
\end{cor}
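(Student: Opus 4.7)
\medskip
\noindent \textbf{Proof proposal.}
The plan is to exploit the natural identification $H^2(A,\ZZ)=\Lambda^2 H^1(A,\ZZ)$, which produces a group homomorphism
$$\Phi: GL(H^1(A,\ZZ))\to GL(H^2(A,\ZZ)), \qquad \Phi(\psi):=\Lambda^2\psi.$$
Since the monodromy action of the family $f_{2d}$ on $H^*(A,\ZZ)$ is induced by its action on $H^1$, Proposition \ref{prop:piatesko1} gives
$$Mon^2_{f_{2d}}(A)=\Phi\bigl(Mon^1_{f_{2d}}(A)\bigr)=\Phi\bigl(Sp(H^1(A,\ZZ),F_h)\bigr).$$
So the statement reduces to the purely lattice-theoretic identity $\Phi(Sp(H^1,F_h))=SO^+_h(H^2)$.

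For the inclusion $\Phi(Sp(H^1,F_h))\subseteq SO^+_h(H^2)$ I would check four items for $\psi\in Sp(H^1,F_h)$: (a) $\Phi(\psi)$ preserves the intersection form on $H^2$, since for $\alpha,\beta\in\Lambda^2 H^1$ one has $\Phi(\psi)\alpha\wedge\Phi(\psi)\beta=\det(\psi)\,\alpha\wedge\beta$ in $H^4\cong\ZZ$ and $Sp\subset SL$; (b) $\det\Phi(\psi)=\det(\psi)^3=1$, so $\Phi(\psi)\in SO$; (c) $\Phi(\psi)(h)=h$, which comes from the fact that, via Poincar\'e duality, the class $h\in H^2=\Lambda^2 H^1$ corresponds precisely to the alternating form $F_h$ on $H^1$, and $\psi$ was chosen to preserve $F_h$; (d) $\Phi(\psi)\in O^+$, since the real symplectic group $Sp(H^1,\R)$ is connected so $\Phi(Sp(H^1,\ZZ))$ sits inside the identity component of $O_h(H^2,\R)$, which is $SO^+_h(H^2,\R)$.

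The reverse inclusion is the main obstacle. The kernel of $\Phi$ is $\{\pm\id\}$ (if $\Lambda^2\psi=\id$ then $\psi$ acts as a scalar on each $2$-plane of $H^1$, forcing $\psi=\pm\id$ on the integral lattice), and the exceptional isomorphism $Sp(4,\R)/\{\pm 1\}\cong SO^+(2,3)(\R)$ shows that at the real Lie group level $\Phi$ is surjective onto $SO^+_h(H^2,\R)$ (both sides have dimension $10$ and the target is connected). The arithmetic obstacle is to upgrade this to $\ZZ$-points. My approach would be to produce an explicit set of generators for $SO^+_h(H^2(A,\ZZ))$ and lift each of them to an integer symplectic transformation of $H^1(A,\ZZ)$. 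Concretely, since $H^2(A,\ZZ)\cong U^3$, I would decompose it as $U\oplus L_1$ with $h\in L_1$ and apply Lemma \ref{lem:supereichler} to write $O^+(H^2)$ in terms of $O^+(L_1)$ and Eichler transvections $E_U(L_1)$; the subgroup fixing $h$ is then generated by the $h$-preserving isometries of $L_1$ together with the Eichler transvections $t(e,a)$ with $a\in h^\perp\cap L_1$. Each such generator can be written explicitly in terms of a symplectic basis of $H^1(A,\ZZ)$ adapted to the polarization $F_h$, and checking integrality of the lift boils down to standard manipulations with such bases.

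The hardest step will be the integral surjectivity: the real isomorphism is classical, but ensuring that every integral generator of $SO^+_h(H^2(A,\ZZ))$ comes from an honest integer symplectic transformation (and not merely a rational one) requires keeping track of the divisibility $d$ of the polarization throughout the computation, especially when $d>1$.
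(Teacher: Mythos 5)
Your reduction of the statement to the lattice-theoretic identity $\Lambda^2\bigl(Sp(H^1(A,\ZZ),F_h)\bigr)=SO^+_h(H^2(A,\ZZ))$, and your verification of the inclusion $\Lambda^2(Sp)\subseteq SO^+_h$, are correct and agree with the paper. The genuine gap is the reverse inclusion, which you only outline. Two concrete problems: (i) your claimed generating set for the stabilizer of $h$ is unjustified. Lemma \ref{lem:supereichler} gives generators of $O^+(U\oplus L_1)$, but a stabilizer subgroup need not be generated by the $h$-fixing members of a generating set of the ambient group: an isometry fixing $h$ may only be expressible as a word whose individual Eichler factors move $h$. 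So the assertion that $SO^+_h$ is generated by $h$-preserving isometries of $L_1$ together with the transvections $t(e,a)$, $a\in h^\perp\cap L_1$, is itself a nontrivial claim requiring proof. (ii) Even granting such generators, lifting each one integrally to $Sp(H^1(A,\ZZ),F_h)$ is exactly the arithmetic content of the corollary, and you leave it at ``standard manipulations,'' explicitly flagging it as the hardest step. As written, the reverse inclusion is a plan rather than a proof.

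The paper takes a different and cleaner route that sidesteps both issues. It first proves the \emph{unpolarized} integral surjectivity $\lambda\bigl(SL(H^1(A,\ZZ))\bigr)=SO^+(H^2(A,\ZZ))$, where $\lambda=\Lambda^2$: by a classical lemma of Beauville (\cite[Exp.\ VIII, Lemme 4]{beau_ast}), $\phi\in SO(H^2(A,\ZZ))$ if and only if $\phi$ or $-\phi$ lies in the image of $\lambda$, so $\lambda(SL)$ has index two in $SO$, and connectedness of $SL(H^1(A,\mathbb{R}))$ places it inside the index-two subgroup $SO^+$, forcing equality. The polarized statement then follows formally: given $\phi\in SO^+_h(H^2(A,\ZZ))$, choose $\psi\in SL(H^1(A,\ZZ))$ with $\lambda(\psi)=\phi$; since for $\psi$ of determinant one, preserving $F_h$ is equivalent to $\lambda(\psi)$ fixing $h$, the condition $\phi(h)=h$ automatically gives $\psi\in Sp(H^1(A,\ZZ),F_h)$. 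This eliminates all the divisibility bookkeeping for $d>1$ that you anticipate. If you want to complete your argument, the simplest fix is to import Beauville's unpolarized surjectivity and then apply this observation, rather than constructing and lifting polarized generators one by one.
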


\begin{proof} For every free $\mathbb{Z}$-module $M$, let $GL(M)$ be the automorphism group of $M$.
Since $H^2(A,\mathbb{Z})$ and $\Lambda^2(H^{1}(A,\mathbb{Z}))$ are isomorphic as modules under the monodromy actions of $\pi_{1}(T_{2d},0)$
there is a group morphism $$\lambda:GL(H^1(A,\ZZ))\rightarrow GL(H^{2}(A,\ZZ))$$ 
sending an automorphism $\psi$ of $H^1(A,\ZZ)$ to the automorphism $\Lambda^{2}(\psi)$ of $H^{2}(A,\ZZ)$
and, by Proposition \ref{prop:piatesko1}, it suffices to show that 
$$\lambda(Sp(H^1(A,\ZZ),F_{h}))=SO^+_{h}(H^{2}(A,\ZZ)).$$

We first remark  that $\lambda(SL(H^1(A,\ZZ)))=SO^+(H^{2}(A,\ZZ))$. By Lemme 4 of \cite[EXPOS\'E VIII]{beau_ast}, 
an isometry $\phi\in O(H^{2}(A,\ZZ))$ belongs to $SO(H^{2}(A,\ZZ))$ if and only if either $\phi$ or $-\phi$ belongs to the image of 
$\lambda$. Since for $\phi\in GL(H^1(A,\ZZ))$ the automorphisms $\phi$ and $\lambda(\psi)$ have the same determinant we deduce the inclusion 
$\lambda(SL(H^1(A,\ZZ)))\subset SO(H^{2}(A,\ZZ))$ and the index of this inclusion is $2$. By connectedness of 
$SL(H^1(A,\mathbb{R}))$ the subgroup  $\lambda(SL(H^1(A,\ZZ)))$ is contained in the index two subgroup  $SO^+(H^{2}(A,\ZZ))$
acting trivially on the set of the connected components of $\mathbb{G}r^{+,or}(H^{2}(A,\ZZ))$: hence $\lambda(SL(H^1(A,\ZZ)))=SO^+(H^{2}(A,\ZZ))$.

To finish the proof we notice that, by definition of $F_h$, an automorphism $\psi\in SL(H^1(A,\ZZ))$ preserves the form $F_h$ if and only if   $\lambda(\psi)$ preserves $h$. By the last equality we deduce that, for every $\phi \in SO^+_{h}(H^{2}(A,\ZZ))$ there exists $\psi\in SL(H^1(A,\ZZ))$ such that $\lambda(\psi)=\phi$ and, since $\phi(h)=h$, we obtain that $\psi\in Sp(H^1(A,\ZZ),F_{h})$: hence 
$SO^+_{h}(H^{2}(A,\ZZ))\subseteq \lambda(Sp(H^1(A,\ZZ),F_{h}))$.   
The opposite inclusion holds because, since  $F_h$ is symplectic,  $Sp(H^1(A,\ZZ),F_{h})\subset SL(H^1(A,\ZZ))$ and, as above,
$\lambda(\psi)(h)=h$ for every $\psi\in Sp(H^1(A,\ZZ),F_{h})$.
\end{proof}

Using an isometry $H^{2}(A,\mathbb{Z})\simeq U^{3}$ and letting 
$a\in U^{3}$ be the image of $h$ the equality of \ref{prop:piatesko1}
can be restated as 
$$Mon^{2}_{f_{2d}}(A_t)\simeq
SO^{+}_a(U^3):=\left\{\varphi\in SO^{+}_a(U^3)|\; \varphi(a)=a \right\}.$$

The following, purely lattice theoretic Lemma shows that a few subgroups of the form $SO^{+}_a(U^3)$ are sufficient to generate the whole
$SO^{+}(U^3)$.
\begin{lem} \label{latticiancora}Let $\left\{e_1,f_1\right\}$ be a standard basis for a copy $U_1$ of $U$ in $U^3$. For every $d\in \mathbb{N}\setminus\left\{0\right\}$, 
the union 
\begin{equation} \label{unione}SO^{+}_{e_1+df_1}(U^3)\cup SO^{+}_{de_1+f_1}(U^3)\cup SO^{+}_{e_1+(d+1)f_1}(U^3)\cup SO^{+}_{(d+1)e_1+f_1}(U^3)
\end{equation} 
in $SO^{+}(U^3)$
generates the whole group. 
\end{lem}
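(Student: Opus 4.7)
The plan is to show that the subgroup $G \subseteq SO^+(U^3)$ generated by the union \eqref{unione} contains enough Eichler transvections to exhaust $SO^+(U^3)$ via Lemmas \ref{lem:minieichler} and \ref{lem:supereichler}. Write $\{e_i,f_i\}_{i=1,2,3}$ for standard bases of the three hyperbolic summands $U_1,U_2,U_3$, so that the four distinguished vectors lie in $U_1$.

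First, I would exhibit many Eichler transvections inside each stabilizer. For an isotropic vector $e$ belonging to $U_2$ or to $U_3$ and for any $v \in U_1$ one has $(e,v)=0$, so by the formula \eqref{eq:eichler} the transvection $t(e,a)$ fixes $v$ precisely when $(a,v)=0$. Since $e_1-df_1 \perp e_1+df_1$ in $U_1$, the stabilizer $SO^+_{e_1+df_1}(U^3)$ contains $t(e_2, e_1-df_1)$, $t(f_2, e_1-df_1)$, $t(e_3, e_1-df_1)$ and $t(f_3, e_1-df_1)$, and analogous lists (with $de_1 - f_1$, $e_1-(d+1)f_1$, $(d+1)e_1-f_1$) hold for the other three stabilizers.

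The central step is to extract the transvections $t(e_2,e_1)$ and $t(e_2,f_1)$. By the additivity relation \eqref{eq:eichler_add2}, the composition $t(e_2, e_1-df_1)\circ t(e_2, e_1-(d+1)f_1)^{-1}$ equals $t(e_2, f_1) \in G$, whence also $t(e_2,e_1)= t(e_2, e_1-df_1)\circ t(e_2,f_1)^d \in G$. Running the identical derivation with each of $f_2, e_3, f_3$ in place of $e_2$ (all equally orthogonal to $U_1$) yields $t(g,e_1), t(g,f_1) \in G$ for every $g \in \{e_2,f_2,e_3,f_3\}$. Combining with the transvections $t(e_2,e_3), t(e_2,f_3), t(f_2,e_3), t(f_2,f_3)$ and $t(e_3,e_2), t(e_3,f_2), t(f_3,e_2), t(f_3,f_2)$ already lying in a single stabilizer, a further application of \eqref{eq:eichler_add2} gives
$$E_{U_2}(U_1 \oplus U_3) \subseteq G \quad \text{and} \quad E_{U_3}(U_1 \oplus U_2) \subseteq G.$$

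To conclude, the four transvections $t(e_3,e_1), t(e_3,f_1), t(f_3,e_1), t(f_3,f_1)$ all lie in $E_{U_3}(U_1 \oplus U_2) \subseteq G$ and, by Lemma \ref{lem:minieichler} applied to the pair $U_1, U_3$, they generate $SO^+(U_1 \oplus U_3)$; hence $SO^+(U_1 \oplus U_3) \subseteq G$. Together with $E_{U_2}(U_1 \oplus U_3) \subseteq G$, the $SO^+$-variant of Lemma \ref{lem:supereichler} (valid because Eichler transvections have determinant one, so that in any factorization $\phi = \sigma g$ of $\phi \in SO^+(L)$ with $\sigma \in O^+(L_1)$ and $g \in \langle E_U(L_1)\rangle$ one automatically has $\sigma \in SO^+(L_1)$) yields $SO^+(U^3) \subseteq G$; the reverse inclusion is immediate. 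The main obstacle is the combinatorial extraction of $t(e_2,e_1)$ and $t(e_2,f_1)$ in the third paragraph: this is where the use of two consecutive degrees $d$ and $d+1$ becomes essential, since it is precisely the fact that $(e_1-df_1)-(e_1-(d+1)f_1)=f_1$ is primitive in $U_1$ that allows one to bootstrap from the constrained transvections in individual stabilizers to arbitrary $\mathbb{Z}$-linear combinations in $U_1$.
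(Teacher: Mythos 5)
Your proof is correct, and it shares the paper's key computational trick — the differencing $t(e_2,e_1-df_1)\circ t(e_2,e_1-(d+1)f_1)^{-1}=t(e_2,f_1)$ made possible by the two consecutive degrees — but it is organized around a different decomposition. The paper applies Lemma \ref{lem:supereichler} with respect to $U_1$: a given $g\in SO^+(U^3)$ is written as a word in $SO^+(U_2\oplus U_3)$ (which fixes $U_1$ pointwise, hence lies in all four stabilizers) and transvections $t(e_1,a),t(f_1,a)$; Eichler's criterion (Lemma \ref{lem:eichler}) together with \eqref{eq:eichler_add3} is then used to push $a$ into $\langle e_2,f_2\rangle$, and Lemma \ref{lem:minieichler} reduces everything to $t(e_2,e_1),t(e_2,f_1),t(f_2,e_1),t(f_2,f_1)$, which the paper extracts using \emph{all four} stabilizers (the pair $e_1+df_1$, $e_1+(d+1)f_1$ gives $t(\cdot,f_1)$, the pair $de_1+f_1$, $(d+1)e_1+f_1$ gives $t(\cdot,e_1)$). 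You instead work bottom-up with $U=U_2$, assembling $\langle E_{U_2}(U_1\oplus U_3)\rangle$ and, via Lemma \ref{lem:minieichler} for $U_1\oplus U_3$, the full $SO^+(U_1\oplus U_3)$ inside $G$; your observation that $t(e_2,e_1)=t(e_2,e_1-df_1)\circ t(e_2,f_1)^d$ means you avoid Lemma \ref{lem:eichler} altogether and in fact only use the two stabilizers of $e_1+df_1$ and $e_1+(d+1)f_1$, so you prove a slightly stronger statement than the lemma (which of course implies it). The only place where you are terse is the ``$SO^+$-variant'' of Lemma \ref{lem:supereichler}: that lemma gives generation, not a priori a factorization $\phi=\sigma g$ with $\sigma\in O^+(L_1)$ and $g\in\langle E_{U_2}(U_1\oplus U_3)\rangle$; to get it, note that $O^+(L_1)$ fixes $e_2,f_2$, so by \eqref{eq:eichler_add3} it normalizes $\langle E_{U_2}(U_1\oplus U_3)\rangle$ and the generated subgroup is the product $O^+(L_1)\cdot\langle E_{U_2}(U_1\oplus U_3)\rangle$, after which your determinant argument does pin $\sigma$ in $SO^+(L_1)$. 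This is a one-line repair, and the paper's own proof is equally brief at the corresponding step (``we can write it as a composition of elements in $SO^+(L_1)$ and transvections as above'').
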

\begin{proof}

Let $g$ be any isometry of $SO^+(U^{3})$. Let $L_1\cong U^2$ be the orthogonal of $U_1$ inside $U^3$ and let $\{ e_2,f_2,e_3,f_3\}$ be the standard basis for $L_1$. By Lemma \ref{lem:supereichler}, every element of $O^+(U^{3})$ is a composition of elements in $O^+(L_1)$ and transvections of the form $t(e_1,a)$ or $t(f_1,a)$ with $a\in L_1$. As $g$ is in $SO^+(U^3)$, we can write it as a compositions of elements in $SO^+(L_1)$ and transvections as above. Therefore, to prove our claim it is enough to prove that all these factors can be obtained by compositions of elements contained in the four subgroups of equation (\ref{unione}).

Since $SO^+(L_1)$ acts trivially on $U_1$, it is contained in each of the four subgroups. 
We will now prove that $t(e_1,a)$ and $t(f_1,a)$ are contained in the subgroup generated by the union (\ref{unione}) simultaneously. 
By the previous step of the proof, by Lemma \ref{lem:eichler} applied to the lattice $L_1$ and by equation \eqref{eq:eichler_add3}, we can suppose that $a$ lies in the second copy of $U$ spanned by $e_2,f_2$. In particular, the two isometries $t(e_1,a)$ and $t(f_1,a)$ act trivially on the third copy of $U$ and can therefore be considered as an isometry of $\langle e_1,f_1,e_2,f_2\rangle$. By Lemma \ref{lem:minieichler}, $t(e_1,a)$ and $t(f_1,a)$ can be written as a composition of $t(e_2,e_1),t(e_2,f_1),t(f_2,e_1)$ and $t(f_2,f_1)$. 
For every positive integer $d$, the transvections $t(e_2,e_1-df_1)$ and $t(f_2,e_1-df_1)$ fix  $e_1+df_1$: therefore they are elements of $SO^+_{e_1+df_1}(U^3)$.
Analogously, the transvections $t(e_2,e_1-(d+1)f_1)$ and $t(f_2,e_1-(d+1)f_1)$ fix  $e_1+(d+1)f_1$, therefore they are elements of $SO^+_{e_1+(d+1)f_1}(U^3)$.

A direct computation using \eqref{eq:eichler_add1} and \eqref{eq:eichler_add2} shows 
\begin{align*}
t(e_2,e_1-df_1)\circ t(e_2,e_1-(d+1)f_1)^{-1}&=t(e_2,f_1)\\
t(f_2,e_1-df_1)\circ t(f_2,e_1-(d+1)f_1)^{-1}&=t(f_2,f_1)
\end{align*}
Analogously, we obtain $t(e_2,e_1)$ and $t(f_2,e_1)$ by fixing the polarizations $de_1+f_1$ and $(d+1)e_1+f_1$, hence our claim.

\end{proof}
 
The following corollary is an immediate geometric consequence of Lemma \ref{latticiancora} and Corollary \ref{cor:piatesko2}.

\begin{cor}\label{abelian_tuttoproj}
Let  $A$ be the product of two elliptic curves $E_1$ and $E_2$. 
Let $e,f$ be the classes of the two elliptic curves inside $NS(A)$.
For every $d\in \mathbb{N}\setminus\left\{0\right\}$
$$SO^{+}(H^{2}(A,\ZZ))=\langle \bigcup_{k=d}^{d+1}SO^{+}_{ke+f}(H^2(A,\mathbb{Z}))\cup \bigcup_{k=d}^{d+1} SO^{+}_{e+kf}(H^2(A,\mathbb{Z}))\rangle\subseteq 
Mon^2(A)^{pr}.$$
\end{cor}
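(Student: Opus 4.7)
The plan is to geometrically realize each of the four subgroups in equation \eqref{unione} of Lemma \ref{latticiancora} as the monodromy group of a projective family of polarized abelian surfaces, and then invoke that lemma.

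First, I would fix an isometry $H^{2}(A,\ZZ) \cong U^3$ sending $(e,f)$ to the standard basis $(e_1,f_1)$ of a chosen copy $U_1$ of $U$. This is possible because for $A = E_1 \times E_2$ the intersection lattice on $H^2$ is $U^3$, and the two fibration classes satisfy $e^2 = f^2 = 0$ and $e \cdot f = 1$, so they span a copy of $U$ whose orthogonal is automatically $U^2$. Under this identification, the group on the right-hand side of the statement is literally the group considered in Lemma \ref{latticiancora}, which is shown there to coincide with $SO^+(U^3) = SO^+(H^2(A,\ZZ))$. This proves the first equality.

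Next, for each integer $k \geq 1$, the class $\alpha_k := ke + f$ (and symmetrically $e + kf$) is primitive, since its coefficients in $\ZZ e \oplus \ZZ f$ are coprime, and ample, since it is the first Chern class of the external tensor product of a degree $k$ line bundle on $E_1$ with a degree $1$ line bundle on $E_2$, both of which are ample. Hence $(A, \alpha_k)$ is a polarized abelian surface of degree $2k$, giving rise to a point of the base $T_{2k}$ of the family $f_{2k}$ from equation \eqref{intro}. By Corollary \ref{cor:piatesko2}, one obtains $SO^+_{\alpha_k}(H^2(A,\ZZ)) = Mon^2_{f_{2k}}(A)$, and by the very definition of the projective monodromy group this is contained in $Mon^2(A)^{pr}$.

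Applying this observation to the four classes $de+f,\,(d+1)e+f,\,e+df,\,e+(d+1)f$ and passing to the subgroup they generate yields the claimed inclusion into $Mon^2(A)^{pr}$, completing the proof. Since the lattice combinatorics of Lemma \ref{latticiancora} and the period domain analysis of Corollary \ref{cor:piatesko2} have already done all the substantive work, no serious obstacle remains; the only care required is the mild verification of primitivity and ampleness of the four polarizing classes sketched above.
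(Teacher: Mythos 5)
Your proposal is correct and follows exactly the route the paper intends: the paper states the corollary as an immediate consequence of Lemma \ref{latticiancora} (applied via an isometry $H^2(A,\ZZ)\cong U^3$ sending $e,f$ to a standard basis of a hyperbolic summand) together with Corollary \ref{cor:piatesko2} applied to the ample primitive classes $ke+f$ and $e+kf$ of square $2k$. Your verification of primitivity and ampleness of these four classes is the only detail the paper leaves implicit, and you handle it correctly.
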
 
\begin{oss}\label{dual}
The most natural example of an isometry in $O^{+}(H^{2}(A,\ZZ))$ that does not belong to $SO^{+}(H^{2}(A,\ZZ))$ (hence nor  to $Mon^{2}(A)$) is provided by
the Poincar\'e duality morphism.

Let $\widehat{A}:=\frac{H^{1}(A,\mathbb{C})}{H^{1}(A,\ZZ)+H^{0,1}(A)}$ be the dual complex torus. There are identifications 
$$H^{n}(\widehat{A},\ZZ)=H^{n}(A,\ZZ)^{\vee},\;H^{p,q}(\widehat{A})=H^{q,p}(A)^{\vee}$$ 
and the Poincar\'e duality morphism $$PD:H^{2}(A,\ZZ)\rightarrow H^{2}(\widehat{A},\ZZ)=H^{2}(A,\ZZ)^{\vee}$$
defined by $$PD(\alpha):=\int_A \alpha \wedge (\cdot)$$
is a isomorphism and a Hodge isometry. 

Since $A$ and $\widehat{A}$ are complex tori the cohomology groups  $H^{1}(A,\ZZ)$ and  $H^{1}(\widehat{A},\ZZ)$
have  canonical orientations inducing orientations on $H^{2}(A,\ZZ)$ and  $H^{2}(\widehat{A},\ZZ)$. As shown in Lemma 3 of \cite{shioda}
the morphism  $PD$ is incompatible with the orientations on 
$H^{2}(A,\ZZ)$ and $H^{2}(\widehat{A},\ZZ)$ and is not the second wedge power of an isomorphism 
between $H^1(A,\ZZ)$ and $H^{1}(\widehat{A},\ZZ)$ (see also \cite[Lemma 4.5]{MarkmanMehrotra}).

If $h\in H^{2}(A,\ZZ)$ is the class of an ample divisor, the same holds for   $\widehat{h}:=PD(h)\in H^{2}(\widehat{A},\ZZ)$  
and $(\widehat{A},\widehat{h})$ is called the dual abelian surface of $(A,h)$.
If $(A,h)$ is a principally polarized abelian surface, i.e. $h^2=2$, there exists an isomorphism of polarized abelian surfaces $g:A\rightarrow\widehat{A}$
and, since isomorphisms between  abelian surfaces are always compatible  with the orientations induced by the complex structures,  $g^{*}\circ PD:H^{2}(A,\ZZ)\rightarrow H^{2}(A,\ZZ)$ is an element of  $O^{+}(H^{2}(A,\ZZ))\setminus SO^{+}(H^{2}(A,\ZZ))$.
\end{oss}

\section{Monodromy of the singular model} \label{sec:sing_mon}
In this section we study locally trivial monodromy of singular symplectic  varieties arising as Albanese fibers of moduli spaces of sheaves on general Abelian surfaces, whose desingularization are manifolds of $OG6$ type. 

In order to properly state the results of this section, we need to fix our setting for this and the next  section. 

Let us first say that, in the whole paper, for every abelian surface $A$ we freely use
the identification $H^{4}(A,\mathbb{Z})=\mathbb{Z}$ provided by the isomorphism sending the Poincar\'e dual $\eta$ of a point on $A$ to $1$.  
\begin{ass} \label{set}Let $A$ be an abelian surface and let $$w=(w_{0},w_2,w_4)\in H^{0}(A,\mathbb{Z})\oplus NS(A)\oplus H^{4}(A,\mathbb{Z})$$ be a Mukai vector such that $w_0>0$ or $w_0=0$, $w_2$ is effective and $w_4\ne 0$ or $w_0=0$, $w_2=0$ and $w_4>0$. Assume that the Mukai square $w^2:=w_2^2-2w_0w_4$ of $w$ is $2$ and set $v=(v_0,v_2,v_4):=(2w_0,2w_2,2w_4)=2w$. 

For a $v$-generic polarization  $H$ on $A$ (see \cite[Definition 2.1]{PerRap}), let $h\in H^{2}(A,\ZZ)$ be its class. 

Let $M_v(A,H)$ be the Gieseker moduli space  of $H$-semistable sheaves on $A$ with Mukai vector $v$. 

Let $K_{v}(A,H)\subset M_v(A)$ be a fibre of the (isotrivial) Albanese fibration of $M_v(A,H)$ and let $\widetilde{K}_{v}(A,H)$ be the the blow up of $K_{v}(A,H)$ along its singular locus with reduced structure. 

By  Theorem 1.6 of \cite{pr_crelle}, the projective variety  $\widetilde{K}_{v}(A,H)$  is a hyperk\"ahler manifold  in the deformation class of $OG6$.

\end{ass}

In this section we determine the group $Mon^2(K_v(A,H))_{lt}^{pr}$ of monodromy operators on $H^{2}(K_v(A,H),\mathbb{Z})$ that are compositions of parallel transport operators along projective families  which are analytically 
locally trivial deformations at every point of the domain. 
The cohomology $H^{2}(K_v(A,H),\mathbb{Z})$ has a lattice structure given by the Beauville-Bogomolov-Namikawa pairing, i.e. the restriction of the Beauville-Bogomolov pairing on the resolution of $K_v(A,H)$. This lattice structure is invariant under deformations which are analitically 
locally trivial deformations at every point of the domain (see \cite[Lemma 5.5]{bakkerlehn}), therefore  $Mon^2(K_v(A,H))_{lt}^{pr}$ is contained in $O^+(H^{2}(K_v(A,H),\mathbb{Z}))$. The main result of this  section states that this inclusion is an equality.
\begin{prop}\label{propsection} Let $A$, $v$ and $h$ be as in Setting \ref{set}. Then
$$Mon^2(K_v(A,H))_{lt}^{pr}=O^+(H^{2}(K_v(A,H),\mathbb{Z})).$$
\end{prop}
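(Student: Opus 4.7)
The plan is to exhibit enough locally trivial monodromy operators on $H^2(K_v(A,H),\ZZ)$ to cover the full group $O^+$, using two geometric sources: parallel transport along projective families of polarized abelian surfaces, provided by Section \ref{sec:abelian}, and Fourier--Mukai equivalences of derived categories. Since the inclusion $Mon^2(K_v(A,H))_{lt}^{pr}\subseteq O^+(H^2(K_v(A,H),\ZZ))$ is Lemma \ref{o+}(2), only the reverse inclusion requires proof.

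First, via the Mukai isomorphism I would identify $H^2(K_v(A,H),\ZZ)$ with the orthogonal $w^{\perp}$ in the even Mukai lattice $\widetilde{H}(A,\ZZ)\cong U^4$. Since $w$ is primitive with $w^2=2$, this orthogonal is a rank seven even lattice isometric to $U^3\oplus\langle -2\rangle$, whose $U^3$-summand corresponds to a copy of $H^2(A,\ZZ)$ and whose $\langle -2\rangle$-summand is spanned by an explicit combination involving the rank and Euler characteristic components of $w$.

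Next I would use projective families of $v$-generically polarized abelian surfaces extending $(A,H)$ to produce monodromy operators: the associated relative moduli spaces and their relative Albanese fibres form projective families of singular symplectic varieties, which are locally trivial at every point by \cite{pr_crelle}. The resulting parallel transport operators thus lie in $Mon^2(K_v(A,H))_{lt}^{pr}$ and act on $w^\perp\subset\widetilde{H}(A,\ZZ)$ via the corresponding action of $Mon^2(A)^{pr}$ on $\widetilde{H}(A,\ZZ)$. Varying over all $v$-generic polarizations on $A$ and applying Corollary \ref{abelian_tuttoproj} yields a subgroup $G_1\subseteq Mon^2(K_v(A,H))_{lt}^{pr}$ that realizes $SO^+(U^3)$ on the $U^3$-summand of $w^\perp$ and acts trivially on the $\langle -2\rangle$-summand.

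To enlarge $G_1$ to all of $O^+$ I would employ Fourier--Mukai autoequivalences sending $v$ to other admissible Mukai vectors $v'$, each producing an isomorphism $K_v(A,H)\cong K_{v'}(A',H')$ deformable in families. In particular, tensorization by line bundles on $A$ yields Eichler transvections mixing the $U^3$ and $\langle -2\rangle$-summands of $w^\perp$, while dualization of sheaves combined with a shift produces the involution $-\mathrm{Id}$ on the $\langle -2\rangle$-summand. Combining $G_1$ with these FM-induced operators, the generation theorems of Lemmas \ref{lem:supereichler} and \ref{lem:minieichler} produce all of $O^+(U^3\oplus\langle -2\rangle)$, completing the proof. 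The main obstacle is the explicit determination of the action of each Fourier--Mukai autoequivalence on $w^\perp$ and the verification that the necessary Eichler generators can be reached; a subsidiary subtlety is ensuring each FM-induced isomorphism extends to an analytically locally trivial projective deformation so that the resulting operators genuinely belong to $Mon^2(K_v(A,H))_{lt}^{pr}$ rather than merely to the group of Hodge isometries.
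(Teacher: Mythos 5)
Your overall architecture (monodromy from projective families of polarized abelian surfaces, plus Fourier--Mukai operators, plus a lattice-theoretic generation argument) is the same skeleton as the paper's, but the two Fourier--Mukai steps as you state them have genuine gaps. Tensorization by a line bundle $L$ acts on the Mukai lattice as multiplication by $\exp(c_1(L))$, which never fixes $w$ (for $w=(1,0,-1)$ it fixes $w$ only when $c_1(L)=0$); hence it does not induce \emph{any} automorphism of $w^{\perp}$, in particular no Eichler transvection mixing the $U^3$ and $\langle -2\rangle$ summands. What it gives is at best an identification of $K_v(A,H)$ with a moduli space for a \emph{different} Mukai vector (and even this needs care, since Gieseker stability is not preserved by twisting with an arbitrary line bundle), and to land in $Mon^2(K_v(A,H))^{pr}_{lt}$ you must compose with a return parallel transport operator from that other moduli space and control the composite -- which is exactly the hard point. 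The paper handles this with one specific Yoshioka isomorphism $\psi:K_{(2,0,-2)}(A,H)\to K_{(0,2e+2f,2)}(A,H)$ (Proposition \ref{prop:yoshicambia}), conjugates the known subgroup $SO^{+}_{e+f}$ through it (Corollary \ref{casoExE}), and then proves by a nontrivial Eichler-criterion computation (Remark \ref{oss:contivettore} and Proposition \ref{prop:so_sing}) that these two subgroups already generate $SO^{+}(w^{\perp})$. Likewise the determinant $-1$ element is not obtained from ``dualization plus shift'' but from Yoshioka's dual-abelian-surface isomorphism (Proposition \ref{lem:yoshiduale}, Corollary \ref{det-1}), whose verification (WIT(2), stability and torsion-freeness of the transform also for strictly semistable and non-locally-free sheaves, plus the fact that the resulting cohomological map composed with a parallel transport is a locally trivial projective monodromy operator) occupies a substantial part of the proof; your dualization involution would need all the analogous checks, none of which is routine.

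The step producing your $G_1$ also hides a real difficulty. Corollary \ref{abelian_tuttoproj} needs polarizations $de+f$ and $e+df$ lying in different $v$-chambers, so the corresponding Albanese fibres are only birational to $K_v(A,H)$ (a flop in finitely many copies of $\P^3$, Proposition \ref{prop:cham} and Corollary \ref{cor:cham}); to transfer their monodromy groups one must show that the induced isomorphism on $H^2$ is itself a parallel transport operator along a locally trivial projective family, which the paper deduces from $2$-factoriality and the Lehn--Pacienza theorem inside the proof of Proposition \ref{daabasing+}. Moreover, for general $v$ with $w_2\neq 0$, polarized deformations of $A$ only yield the stabilizer $SO^{+}_{w_2}$, so one must first reduce to $v=(2,0,-2)$ on a product of elliptic curves via locally trivial projective deformations; your sketch asserts $SO^{+}(U^3)\subseteq G_1$ for arbitrary $v$ without these reductions, and also identifies the $U^3$ summand of $w^{\perp}$ with $H^2(A,\ZZ)$, which is only correct when $w_2=0$. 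In short, the skeleton is right and parallels the paper, but the specific mechanisms you propose for the mixing transvections and for the determinant $-1$ operator either fail as stated or leave unaddressed precisely the verifications that constitute the paper's proof.
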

The proof will be given in Subsection \ref{SM} using preliminary results proven in
Subsections \ref{MA} and \ref{YO}.




\subsection{Monodromy from the underlying abelian surface}\label{MA}
In this subsection we use Proposition  \ref{cor:piatesko2} and Corollary \ref{abelian_tuttoproj} to describe the monodromy operators in $Mon^{2}(K_{v}(A,H))_{lt}^{pr}$ induced by monodromy operators of $Mon^{2}(A)$.
In order to relate $Mon^{2}(K_{v}(A,H))_{lt}^{pr}$ and $Mon^{2}(A)$,
we need to recall the relation provided by the Mukai-Donaldson-Le Potier morphism (see \cite[\S 3.2]{pr_crelle}) between the cohomology of $A$ and $H^{2}(K_{v}(A,H),\ZZ)$. For every $$v=(v_0,v_2,v_4)=(2w_0,2w_2,2w_4)=2w$$ as in Setting \ref{set}, let 
$v^{\perp}=w^{\perp }\subset H^{ev}(A,\mathbb{Z}):=\oplus_{i=0}^{2}H^{2i}(A,\mathbb{Z})$ be the perpendicular lattice to $v$ in the Mukai lattice of $A$ (see \cite[Definition 1.1]{yoshi_moduli}) and let $H^{2}(K_{v}(A,H),\mathbb{Z})$  be endowed with the lattice structure given by the
Beauville-Bogomolov-Namikawa form.

By \cite[Theorem 1.7]{pr_crelle}, the  Mukai-Donaldson-Le Potier morphism 
\begin{equation} \label{mdl*} \nu_{v,H}: w^{\perp}\rightarrow H^{2}(K_{v}(A,H),\mathbb{Z})\end{equation}
is an isomorphism of abelian groups and an  isometry of lattices respecting the natural weight two  Hodge structures on $v^{\perp}$ and $H^{2}(K_{v}(A,H),\mathbb{Z})$.  

The Mukai-Donaldson-Le Potier morphism induces an identification
\begin{equation}\label{CohMuk}
O^+(w^{\perp})=O^+(H^{2}(K_{v}(A,H),\mathbb{Z})).
\end{equation}
The subgroup $SO^+_{w_2}(H^{2}(A,\mathbb{Z}))\subseteq SO^+(H^{2}(A,\mathbb{Z}))$ fixing $w_2$ is naturally a subgroup of $O^+(w^{\perp})$: the injection is given by extending every $\gamma\in SO^+_{w_2}(H^{2}(A,\mathbb{Z}))$ to the isometry of the Mukai lattice of $A$ acting as the identity on $H^0(A,\ZZ)\oplus H^4(A,\ZZ)$ and then restricting to $w^\perp$. 

In the following proposition we compare the two groups  $SO^+_{w_2}(H^{2}(A,\mathbb{Z}))$
and 
$Mon^2(K_v(A,H))_{lt}^{pr}$ as subgroups of $O^+(w^{\perp})= O^+(H^{2}(K_{v}(A,H),\mathbb{Z}))$ 
in relevant cases.
\begin{prop}\label{daabasing} Keep notation as in Setting \ref{set}. Suppose  that 
$w_2\in NS(A)$ is proportional to the class $h$ of the $v$-generic polarization $H$. Then, using the identification (\ref{CohMuk}),
$$SO^+_{h}(H^{2}(A,\mathbb{Z}))\subseteq Mon^2(K_v(A,H))_{lt}^{pr}.$$
\end{prop}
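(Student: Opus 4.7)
The plan is to transport the abelian-surface monodromy of Corollary~\ref{cor:piatesko2} onto $H^2(K_v(A,H),\ZZ)$ via the equivariance of the Mukai--Donaldson--Le Potier isomorphism \eqref{mdl*}, working over (an open part of) the family $f_{2d}:\mc{A}_{2d}\to T_{2d}$ of Section~\ref{sec:abelian}. Writing $h=kh'$ with $h'\in NS(A)$ primitive and $(h')^2=2d$, the pair $(A,h')$ is a fibre of $f_{2d}$. The proportionality of $w_2$ to $h$, together with integrality of $w_2$ and primitivity of $h'$, gives $w_2=c'h'$ for some $c'\in\ZZ$; hence $c'h'_t$ is a well-defined integral class on each fibre of $f_{2d}$, yielding a relative Mukai vector $v_t:=2(w_0,c'h'_t,w_4)$ that restricts to $v$ at $0$. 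Note also that $SO^+_h(H^2(A,\ZZ))=SO^+_{h'}(H^2(A,\ZZ))$, so Corollary~\ref{cor:piatesko2} applies in the form needed for $h'$.

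Next, one restricts to the subset $T^\circ_{2d}\subseteq T_{2d}$ over which $h'_t$ is $v_t$-generic. Its complement is contained in the countable union of Noether--Lefschetz loci on which $NS(A_t)$ is strictly larger than $\ZZ h'_t$; these are proper analytic subvarieties of complex codimension at least one, so the inclusion $T^\circ_{2d}\hookrightarrow T_{2d}$ induces a surjection on fundamental groups based at $0$. Over $T^\circ_{2d}$ the relative Gieseker moduli space and its relative Albanese fibre $\mc{K}_v\to T^\circ_{2d}$ are defined, and by the construction of \cite{pr_crelle} they form a projective, analytically locally trivial family of primitive symplectic varieties with central fibre $K_v(A,H)$; the induced monodromy on $H^2(K_v(A,H),\ZZ)$ therefore lies in $Mon^2(K_v(A,H))^{pr}_{lt}$.

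It remains to identify this monodromy with $SO^+_h(H^2(A,\ZZ))$ under \eqref{CohMuk}. Since $\nu_{v,H}$ is built from K\"unneth components of the Chern character of a (twisted) universal family on $K_v(A,H)\times A$, it is equivariant under parallel transport in the combined family $\mc{K}_v$ and $\mc{A}_{2d}|_{T^\circ_{2d}}$ over $T^\circ_{2d}$. The induced monodromy on the Mukai lattice $H^{ev}(A,\ZZ)$ acts trivially on $H^0\oplus H^4$ and, by the surjectivity of fundamental groups together with Corollary~\ref{cor:piatesko2}, realises the full $SO^+_h(H^2(A,\ZZ))$ on $H^2$; this action preserves $w=(w_0,c'h',w_4)$, restricts to $w^\perp$, and under \eqref{CohMuk} is transported precisely to the copy of $SO^+_h(H^2(A,\ZZ))$ inside $O^+(H^2(K_v(A,H),\ZZ))$ described in the paragraph preceding the statement. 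The main technical points are the analytic local triviality of $\mc{K}_v\to T^\circ_{2d}$ and the equivariance of $\nu_{v,H}$ under parallel transport; both are consequences of the relative construction carried out in \cite{pr_crelle}.
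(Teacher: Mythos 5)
Your overall route is the paper's: run the polarized family $f_{2d}$ (after replacing $h$ by its primitive part $h'$, which is harmless since $SO^+_h=SO^+_{h'}$ and genericity only depends on the ray), restrict to the locus where the polarization stays $v$-generic, pass to the relative moduli space and its Albanese fibres, and transport Corollary \ref{cor:piatesko2} through the relative Mukai--Donaldson--Le Potier isomorphism. The one step you assert but do not justify --- and which is exactly where the paper spends a full paragraph --- is the existence of the family of Albanese \emph{fibres} $\mathcal{K}_v\to T^\circ_{2d}$. The relative Gieseker moduli space $p\colon\mathcal{M}_v\to T^\circ_{2d}$ does exist and is analytically locally trivial, but the Albanese fibre of $M_{v_t}(A_t,H_t)$ is only defined after a choice of base point (the Albanese map takes values in a torsor); to cut out a holomorphically varying family of fibres one needs a relative Albanese morphism, hence a section of $p$, which need not exist. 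The paper's fix is to base-change the family of abelian surfaces along $p$ itself, so that the diagonal provides a tautological section of the new relative moduli space, and then to use that $p$ has connected fibres (hence induces a surjection on fundamental groups) to see that this base change does not shrink the monodromy. Without this, or an equivalent argument, your monodromy operators have not actually been realized by a proper, analytically locally trivial, projective family having $K_v(A,H)$ as a fibre; so this is a genuine (though reparable) gap, and citing ``the construction of \cite{pr_crelle}'' does not cover it.

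A secondary point: you replace the paper's appeal to \cite[Corollary 4.2 and Lemma 4.4]{pr_crelle} by the claim that the non-generic locus is contained in the Noether--Lefschetz locus. Under the proportionality hypothesis this is correct (on a fibre with $NS(A_t)=\ZZ h'_t$ there is no nonzero algebraic class orthogonal to $h'_t$, and in the rank-zero case every subcurve class is proportional to $v_2$), but it should be checked against \cite[Definition 2.1]{PerRap} rather than asserted. Moreover the Noether--Lefschetz loci form a countable union that is not locally finite, so the surjectivity of $\pi_1(T^\circ_{2d},0)\to\pi_1(T_{2d},0)$ requires a Baire/general-position argument, not just the standard codimension-two statement for a single analytic subvariety; the cited results of \cite{pr_crelle} give a locally finite union and avoid this issue.
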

\begin{proof}
Let $f:\mathcal{A}\rightarrow T$ be an algebraic family of abelian surfaces such that $A=A_0$ is the fiber of $f$
over a point $0\in T$ and let $\mathcal{L}$ be a relatively ample line bundle on  $\mathcal{A}$ whose cohomology class restricts to a multiple of $h$ on $A$. We claim that
\begin{equation}\label{NE} Mon^{2}_f(A)\subseteq Mon^{2}(K_v(A),H)^{pr}_{lt}.\end{equation}

For $t\in T$ set $A_t:=f^{-1}(t)$ and let $L_t$ be the restriction of $\mathcal{L}$ to $A_t$.
The existence of such an $\mathcal{L}$ implies that $h$ and $v$ are fixed under the monodromy action associated with $f$ and, for $t\in T$, we denote by $v_t$ the cohomology class induced by parallel transport on $A_t:=f^{-1}(t)$. 

Let $p:\mathcal{M}_{v}\rightarrow T$ be the relative moduli space of sheaves whose fiber over $t\in T$ is the moduli space 
$M_{v_t}(A_t,L_{t})$ of the $L_t$-semistable sheaves with Mukai vector $v_t$ on $A_t$ (see \cite[Theorem 4.3.7]{HL}).  
By \cite[Corollary 4.2 and Lemma 4.4]{pr_crelle}, by removing a locally finite union of complex analytic subvarieties from $T$, we get an open subset $U$ such that  $L_t$  is a $v_t$-generic for every $t\in U$. 
By \cite[Proposition 2.16]{pr_crelle}, the morphism $p_{U}:\mathcal{M}_{v,U}\rightarrow U$ obtained from $p$ by base change over $U$ is a proper  analytically locally trivial family at every point of the domain.

We first prove the claim assuming that  $p_{U}$ admits a section. 

In this case it also admits a relative Albanese morphism and taking the inverse image of the $0$-section of the relative Albanese variety we obtain a proper analytically locally trivial family $q_{U}:\mathcal{K}_{v,U}\rightarrow U$ with projective fibers such that, for every $t\in U$, the fibre 
$q^{-1}(t)$ is the Albanese fiber $K_v(A_t,L_{t})$ of $M_{v_t}(A_t,L_{t})$.

Since $v$ and $w$ are fixed under the monodromy action associated with $f$, we can consider the local system  
$\mathcal{W}^{\perp}\subset\oplus_{i=0}^{2}R^{2i}f_{*}(\mathbb{Z})$ whose fiber at $t$ is the sublattice $w^{\perp}_t$ of the Mukai lattice of $A_t$ and compare its restriction $\mathcal{W}^{\perp}_{U}$ with the local system $R^{2}q_{U*}(\mathbb{Z})$.
For every $t\in U$ the Mukai-Donaldson-Le Potier morphism 
$\nu_{v_t,L_{t}}: w_{t}^{\perp}\rightarrow H^{2}(K_{v}(A_t,L_{t}),\mathbb{Z})$ gives an isomorphism on the corresponding fibers and moreover there exists an isomorphism of local systems 
$$\mathcal{W}_{U}^{\perp}\simeq R^{2}q_{U*}(\mathbb{Z})$$ inducing $\nu_{v_t,L_{t}}$ on the fiber over $t\in U$ (see  \cite[\S 3.2]{pr_crelle}).
Considering the monodromy groups associated with the local systems and 
using the identification \eqref{CohMuk}, it follows that $$Mon^{2}_f(A)=Mon^{2}_{q_U}(K_v(A,H))\subseteq Mon^{2}(K_v(A,H))^{pr}_{lt}.$$

In the general case where  $p_{U}$ has no section, we may replace $\mathcal{A}$ by $\mathcal{M}_{v}\times_T \mathcal{A}$
and $f$  by the induced morphism $f_{\mathcal{M}_v}: \mathcal{M}_{v}\times_T \mathcal{A}\rightarrow \mathcal{M}_{v}$. The associated relative moduli space is  given by the projection 
$$p_{\mathcal{M}_v}: \mathcal{M}_{v}\times_T \mathcal{M}_{v} \rightarrow \mathcal{M}_{v}.$$ 
Since the diagonal morphism provide a section of $p_{\mathcal{M}_v}$, the previous argument implies that  
$Mon^{2}_{f_{\mathcal{M}_v}}(A)\subseteq Mon^{2}(K_v(A,H))^{pr}_{lt}$.
Finally,  since $p:\mathcal{M}_{v}\rightarrow T$ has connected fibers, it induces a surjection on fundamental groups: it follows that  
$Mon^{2}_{f_{\mathcal{M}_v}}(A)=Mon^{2}_f(A)$ and $Mon^{2}_{f}(A)\subseteq Mon^{2}(K_v(A,H))^{pr}_{lt}$, as claimed. 

Applying the claim to the family $f_{2d}:\mathcal{A}_{2d}\rightarrow T_{2d}$, introduced in equation \eqref{intro} and using Corollary \ref{cor:piatesko2} we obtain the proposition.
\end{proof}

We are going to show that, in the important case where $w_2=0$, i.e. $v=(2,0,-2)$, the whole 
$SO^+(H^{2}(A,\mathbb{Z}))$ is contained in  $Mon^2(K_v(A,H))_{lt}^{pr}$. We argue by considering an abelian surface $A$
that is the product of two elliptic curves and analyzing how different polarizations on $A$ induce subgroups
of  $Mon^2(K_v(A,H))_{lt}^{pr}$. As a preliminary step we need to study the dependence of $K_v(A,H)$ from the $v$-generic polarization $H$.
\begin{prop}\label{prop:cham} Keep notation as in Setting \ref{set}. Assume further that 
$A$ is the product of two  elliptic curves $E_1$ and $E_2$, the Neron Severi group 
$NS(A)$ is generated by the classes $e$ and $f$ of $E_1$ and $E_2$ and $h=de+f$ for $d>5$.
If $F$ is a $H$-(semi)stable on $A$ with Mukai vector $v=(2,0,-2)$ then either 
\begin{enumerate}
\item{$F$ is $H'$-(semi)stable for every ample line bundle $H'$ on $A$ or} 
\item{$F$ fits in a non trivial extension of the form $$0\rightarrow M_1\rightarrow F\rightarrow M_2\rightarrow 0$$
where $M_1$ and $M_2$ are  line bundles on $A$ whose first Chern classes satisfy  $c_1(M_1)=e-f=-c_1(M_2)$.}
\end{enumerate}
Moreover, if $F$ is any sheaf fitting in a non trivial extension as in (2) and $ae+bf\in NS(A)$ is the class of $H'$, then  $F$ is $H'$-stable if and only if $a>b$ and is strictly $H'$-semistable if and only if $a=b$.
\end{prop}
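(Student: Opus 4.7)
The plan is to run a wall-crossing analysis for $v$-stability on $A$ and combine it with a cohomological rigidity argument for non-split extensions. The integrality constraint imposed by the Mukai vector $v=(2,0,-2)$ will force the only wall separating $H$ from other ample classes to be the diagonal ray $\{a=b\}$.

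Suppose $F$ is $H$-(semi)stable but not $H'$-(semi)stable for some ample $H'$. Then there is a saturated rank-one subsheaf $M_1\subset F$ with torsion-free quotient $M_2=F/M_1$ satisfying $\mu_H(M_1)\leq 0 <\mu_{H'}(M_1)$. Writing $M_i=L_i\otimes I_{Z_i}$ with $n_i=|Z_i|\geq 0$ and $\alpha:=c_1(M_1)=me+nf$, the Mukai decomposition $v(F)=v(M_1)+v(M_2)$ yields $n_1+n_2=\alpha^2+2=2mn+2$, hence $mn\geq -1$. The inequalities $\alpha\cdot H=m+dn\leq 0$ and $\alpha\cdot H'=bm+an>0$ (for $H'=ae+bf$) force $\alpha$ to pair with opposite signs on the ample cone, so $mn<0$; combined with $mn\geq -1$ and $d>1$ the unique solution is $(m,n)=(1,-1)$. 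Thus $\alpha=e-f$ and $n_1=n_2=0$, so $M_1$ and $M_2$ are line bundles. The resulting extension $0\to M_1\to F\to M_2\to 0$ must be non-trivial, since otherwise $M_2\hookrightarrow F$ would $H$-destabilize $F$ via $c_1(M_2)\cdot H=d-1>0$.

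For the ``Moreover'' part, fix a nonzero extension class $\xi\in\Ext^1(M_2,M_1)$ and the associated $F$, together with an ample $H'=ae+bf$. For any rank-one torsion-free subsheaf $M_1'\subseteq F$, the composition $M_1'\to F\to M_2$ is either zero (so $M_1'\subseteq M_1$) or injective. In the injective case, the embedding $M_1'\hookrightarrow M_2$ lifts to $F$ precisely when the pullback of $\xi$ to $\Ext^1(M_1',M_1)$ vanishes; the long exact sequence attached to $0\to M_1'\to M_2\to M_2/M_1'\to 0$ shows this pullback map is injective because $\Hom(M_2/M_1',M_1)=0$ (the quotient is torsion, while $M_1$ is torsion-free). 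Hence non-triviality of $\xi$ rules out every such lift, and every rank-one torsion-free subsheaf of $F$ is contained in $M_1$. Writing $c_1(M_1')=(e-f)-D$ with $D=\gamma e+\delta f$ effective ($\gamma,\delta\geq 0$) and $|Z'|\geq 0$ the length of the zero-dimensional part of $M_1/M_1'$, a direct Riemann--Roch computation on the abelian surface gives
\[
p_{M_1'}(n)-p_F(n)=\bigl[(b-a)-b\gamma-a\delta\bigr]\,n+\bigl[\gamma-\delta+\gamma\delta-|Z'|\bigr].
\]
If $a>b$, the leading coefficient is strictly negative for all $\gamma,\delta\geq 0$, so $p_{M_1'}<p_F$ for $n\gg 0$ and $F$ is $H'$-stable. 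If $a=b$, the leading coefficient vanishes only for $\gamma=\delta=0$, and then the constant term $-|Z'|$ is zero precisely for $M_1'=M_1$; hence $M_1$ exhibits $F$ as strictly $H'$-semistable.

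The main obstacle is the cohomological rigidity step: establishing that no rank-one torsion-free subsheaf of $F$ maps non-trivially into $M_2$ when $\xi\neq 0$. Once this rigidity --- encoded in the vanishing $\Hom(M_2/M_1',M_1)=0$ together with non-triviality of $\xi$ --- is in place, the remaining Hilbert-polynomial bookkeeping reduces to positivity of the effective cone of $\NS(A)=\ZZ e\oplus\ZZ f$ paired with the ample class $H'$, and is routine.
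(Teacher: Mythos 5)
Your first paragraph (the dichotomy) is essentially the paper's own argument: pass to a saturated rank-one destabilizer, use additivity of $ch_2$ for $v=(2,0,-2)$ to get $n_1+n_2=2mn+2$, hence $mn\ge -1$, and use the sign constraints on the ample cone to force $(m,n)=(1,-1)$, $Z_1=Z_2=\emptyset$, and non-splitness. The only gloss is the unjustified strict inequality $\mu_{H'}(M_1)>0$: Gieseker destabilization only gives $\mu_{H'}(M_1)\ge 0$, and the boundary case $\alpha=0$ has to be killed by a one-line Euler-characteristic comparison; this is minor and at the same level of detail as the paper.

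The ``Moreover'' part, however, contains a genuine gap, exactly at the step you flag as the main obstacle. The claim that every rank-one torsion-free subsheaf of $F$ is contained in $M_1$ is false: for $n\gg 0$ one has $h^0(F(nH))>h^0(M_1(nH))$, so there are maps $\mathcal{O}(-nH)\to F$ whose composition with $F\to M_2$ is nonzero, i.e.\ rank-one subsheaves not contained in $M_1$. The cohomological justification is also wrong: in the long exact sequence obtained by applying $\Hom(-,M_1)$ to $0\to M_1'\to M_2\to Q\to 0$, the kernel of $\Ext^1(M_2,M_1)\to\Ext^1(M_1',M_1)$ is the image of $\Ext^1(Q,M_1)$, not something controlled by $\Hom(Q,M_1)$; and $\Ext^1(Q,M_1)$ vanishes only when $Q$ is zero-dimensional (for $Q$ supported on a curve it is typically nonzero, so the restriction of $\xi$ may vanish). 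Consequently your Hilbert-polynomial bookkeeping only covers subsheaves of $M_1$ and misses precisely the dangerous candidates: subsheaves injecting into $M_2$ with $c_1(M_1')=(f-e)-D$, $D$ effective, whose $H'$-slope $(a-b)-D\cdot h'$ can be non-negative when $a$ is much larger than $b$. The way to exclude them — and this is what the paper's proof does by saying a destabilizing sequence ``would be of the same form'' — is to rerun the Mukai-vector computation for an arbitrary saturated destabilizer $M'$: writing $c_1(M')=l'e+m'f$, saturation gives $l'm'\ge -1$, while injectivity into $M_2$ gives $l'\le -1$ and $m'\le 1$; hence either $m'\le 0$, in which case $\mu_{H'}(M')\le l'b<0$ and $M'$ does not destabilize, or $(l',m')=(-1,1)$, in which case the numerics force $M'\cong M_2$ and the inclusion $M'\subset F$ splits the extension, contradicting $\xi\neq 0$. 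Only after this reduction does the comparison with $M_1$ (slope $b-a$, and $\chi(M_1)=-1$ for the case $a=b$) yield the stated stability and strict semistability criteria.
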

   
\begin{proof}  
Since $H$ is $v$-generic, if $F$ is strictly $H$-semistable it fits in an exact sequence of the form 
$$0\rightarrow F_1\rightarrow F\rightarrow F_2\rightarrow 0$$ where the $F_1$ and $F_2$ are pure sheaves with Mukai vector $(1,0,-1)$.
Since $F_1$ and $F_2$ have rank one, they are stable with respect to every polarization: it follows that $F$ is strictly
$H'$-semistable    for every ample $H'$.

Let  $F$ be $H$-stable, if $F$ is also $H'$-unstable there exists an $H'$-destabilizing sequence of the form
 \begin{equation}\label{EXSE}
0\rightarrow M_1\otimes I_{Z_1}\rightarrow F\rightarrow M_2\otimes I_{Z_2} \rightarrow 0
\end{equation} 
where $M_1$ and $M_2$ are line bundles and $I_{z_1}$ and $I_{z_2}$
the  the ideal sheaves of a zero dimensional subschemes  $Z_1$ and $Z_2$. Let $l, m\in \mathbb{Z}$ be such that $c_1(M_1)=le+mf\in NS(A)$. Since  $F$ is $H$-stable and $M_1$ is a  $H'$-destabilizing subsheaf, we deduce that $lm<0$.
Using the exact sequence  (\ref{EXSE}) to relate  the Chern characters, since  $ch_2(F)=v_4=-2$ and $ch_2(M_1)=ch_2(M_2)=lm$,
we obtain $$-2=2lm-lg(Z_1)-lg(Z_2)$$       
where $lg(Z_1)$ and $lg(Z_2)$ are  the length of the subschemes $Z_1$ and $Z_2$: it follows that $lm=-1$ and $Z_1$ and $Z_2$ are empty.
Finally, since $F$ is $H$-stable and $d>1$, we conclude that $l=1$, $m=-1$ and the extension (\ref{EXSE}) is non trivial, as desired.

The final  part of the statement follows since an  exact sequence as in item (2) is $H'$-destabilizing only for $a\le b$ and 
the previous argument shows that, if $H''$ another ample line bundle, an $H''$-destabilizing sequence for $F$ would be of the same form. 
\end{proof} 
\begin{oss} According to the definition of $v$-generic polarization (see \cite[Definition 2.1]{PerRap}), an ample line bundle $H$ on $A$ 
of class $de+f$ is $(2,0,-2)$-generic if and only if $d>5$. However the statement of the proposition
holds as soon as $d>1$.
\end{oss}

Proposition \ref{prop:cham} is rephrased in term of the birational geometry of $K_v(A,H)$ in the following corollary.
\begin{cor} \label{cor:cham} Let $A$ and $v$ be as in Proposition \ref{prop:cham}, let $H_1$ and $H_2$ be $v$-generic polarizations
and let $h_1=a_1e+b_1f$ and $h_2=a_2e+b_2f$ be their respective classes.
\begin{enumerate}
\item{If $(a_1-b_1)(a_2-b_2)>0$ there exists an isomorphism $$\phi_{+}:K_v(A,H_1)\rightarrow K_v(A,H_2)$$
 sending the S-equivalence class  of a sheaf $F$ with respect to $H_1$ to the S-equivalence class of $F$ with respect to $H_2$; }
\item{if $(a_1-b_1)(a_2-b_2)<0$ there exist Zariski open subsets $U_v(A,H_1)\subset K_v(A,H_1)$ and $U_v(A,H_2)\subset K_v(A,H_2)$
obtained by removing a finite set of copies of $\mathbb{P}^3$ form the smooth loci of   $K_v(A,H_1)$ and $K_v(A,H_2)$ and an isomorphism
$$\phi_{-}:U_v(A,H_1)\rightarrow U_v(A,H_2)$$
sending the S-equivalence class  of a sheaf $F$ with respect to $H_1$  to the S-equivalence class of $F$ with respect to $H_2$.}
\end{enumerate}   
\end{cor}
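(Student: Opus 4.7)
The strategy is to use Proposition \ref{prop:cham} to compare which sheaves with Mukai vector $v=(2,0,-2)$ are $H_i$-semistable for $i=1,2$, then descend the comparison to the Albanese fibers. By that proposition, every $H$-stable sheaf with Mukai vector $v$ is either of type (1), $H'$-stable for every polarization, or of type (2), a non-split extension $0 \to M_1 \to F \to M_2 \to 0$ with $c_1(M_1)=e-f$ that is $H'=(ae+bf)$-stable iff $a>b$ and strictly $H'$-semistable iff $a=b$. Moreover, strictly $H$-semistable sheaves (those whose Jordan--H\"older factors have Mukai vector $(1,0,-1)$) are independent of the polarization, and their locus is precisely the singular locus of $K_v(A,H_i)$.

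For (1), after possibly swapping the roles of $e$ and $f$, one may assume $a_i>b_i$ for both $i$. Then types (1) and (2) have identical $H_1$- and $H_2$-stability, and combined with the polarization-independence of the strictly semistable part, the moduli functors of $H_1$- and $H_2$-semistable sheaves coincide. This yields $M_v(A,H_1)=M_v(A,H_2)$ and, by restriction to Albanese fibers, the desired isomorphism $\phi_+$.

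For (2), assume $a_1>b_1$ and $a_2<b_2$. Let $W_1\subset K_v(A,H_1)^{sm}$ be the locus of non-split extensions with $c_1(M_1)=e-f$, which are $H_1$-stable but $H_2$-unstable, and symmetrically let $W_2\subset K_v(A,H_2)^{sm}$ be the locus of non-split extensions with $c_1(M_1')=f-e$, which are $H_2$-stable but $H_1$-unstable. In the smooth locus $K_v(A,H_i)^{sm}$ the complement of $W_i$ consists entirely of type (1) sheaves, which are $H_j$-stable for every $j$. Hence, setting $U_v(A,H_i):=K_v(A,H_i)^{sm}\setminus W_i$, the S-equivalence identification $F\mapsto F$ produces the desired isomorphism $\phi_-:U_v(A,H_1)\to U_v(A,H_2)$.

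It remains to prove that $W_1$ (and symmetrically $W_2$) is a finite disjoint union of copies of $\mathbb{P}^3$. For a fixed pair $(M_1,M_2)$ of line bundles with $c_1(M_1)=e-f$ and $c_1(M_2)=f-e$, the non-trivial extensions $0 \to M_1 \to F \to M_2 \to 0$ up to isomorphism are parametrized by $\mathbb{P}(\Ext^1(M_2,M_1))=\mathbb{P}(H^1(A, M_1 \otimes M_2^{-1}))$. Setting $L:=M_1\otimes M_2^{-1}$, one has $c_1(L)=2(e-f)$ with $c_1(L)^2=-8$, so neither $L$ nor $L^{-1}$ is effective, forcing $h^0(L)=h^2(L)=0$; Riemann--Roch then gives $\chi(L)=L^2/2=-4$, so $h^1(L)=4$ and the parameter space is a $\mathbb{P}^3$. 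The main obstacle is to verify that Yoshioka's Albanese morphism on $M_v$, whose two components are the determinant in $\Pic(A)$ and an Abel--Jacobi-type map to $A$, restricts the possible pairs $(M_1,M_2)$ giving sheaves in the Albanese fiber $K_v(A,H_1)$ to a finite set, so that $W_1$ is indeed a finite disjoint union of $\mathbb{P}^3$'s.
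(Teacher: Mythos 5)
Your overall strategy coincides with the paper's: both parts are deduced from Proposition \ref{prop:cham} by comparing $H_1$- and $H_2$-(semi)stability, identifying the simultaneously semistable locus, and recognizing the complement as a union of loci $\mathbb{P}(\Ext^1(M_2,M_1))\cong\mathbb{P}^3$ of non-split extensions (your computation $h^1(M_1\otimes M_2^{-1})=4$ agrees with the paper's). However, your argument stops exactly at the point you yourself label ``the main obstacle'': you never prove that only finitely many such $\mathbb{P}^3$'s lie in a fixed Albanese fibre, you only announce that one should check it via the two components of the Albanese morphism. This is a genuine gap, not a routine verification: the pairs $(M_1,M_2)$ with the prescribed first Chern classes form a four-dimensional family ($\widehat{A}\times\widehat{A}$), so the totality of such extension sheaves in $M_v(A,H_i)$ is seven-dimensional, and one must actually show its intersection with a six-dimensional Albanese fibre is only three-dimensional, i.e. compute how the Albanese point of an extension depends on $(M_1,M_2)$ or argue otherwise. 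The paper closes this step differently: each $\mathbb{P}^3$ is contracted by the Albanese fibration, hence lies entirely in one fibre; the $A\times A^\vee$-action (which preserves stability for every polarization and is transitive on Albanese fibres) shows the locus is nonempty in each fibre; and finiteness follows because $K_v(A,H_i)$ is a symplectic sixfold, so the removed locus, being a union of $\mathbb{P}^3$'s contained in the smooth locus, can only consist of finitely many of them. Without such an argument, the assertion ``finite set of copies of $\mathbb{P}^3$'' is unproved.

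Two smaller points. First, you set $U_v(A,H_i):=K_v(A,H_i)^{sm}\setminus W_i$, thereby also removing the singular locus; in the paper $U_v(A,H_i)$ is the open locus of sheaves semistable with respect to both polarizations, which contains the singular locus (strictly semistable sheaves are semistable for every polarization, as you note), and only the $\mathbb{P}^3$'s, which lie in the smooth stable locus, are removed. This larger $U_v$ is what the later application (Proposition \ref{daabasing+}) requires, since there one uses that $K_v(A,H_i)\setminus U_v(A,H_i)$ is contained in the smooth locus and has codimension $3$ to transfer classes on $H^2$. Second, you implicitly use, but do not justify, that $\mathbb{P}(\Ext^1(M_2,M_1))$ injects into the moduli space (non-proportional extension classes give non-isomorphic sheaves, and distinct pairs give disjoint loci); the paper disposes of this with a short diagram chase, and some remark to this effect is needed for $W_i$ to literally be a union of $\mathbb{P}^3$'s.
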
   
\begin{proof} If $(a_1-b_1)(a_2-b_2)>0$  the isomorphism $\phi_{+}$ exists because, by Proposition \ref{prop:cham} and irreducibility of $M_v(A,H_1)$ and $M_v(A,H_2)$ (see \cite{kls}), the subvarieties
$K_v(A,H_1)$ and $K_v(A,H_2)$ corepresent the same functor.
If $(a_1-b_1)(a_2-b_2)<0$ we define $U_v(A,H_1)\subset K_v(A,H_1)$ and $U_v(A,H_2)\subset K_v(A,H_2)$ as the open subset parametrizing sheaves that are simultaneously $H_1$-semistable and $H_2$-semistable. The same argument of the previous case implies the existence of the isomorphism $\phi_{-}$. 

It remains to show that, for $i=1,2$, the closed subset  $K_v(A,H_i) \setminus U_v(A,H_i)$ consists of a finite union of copies of $\mathbb{P}^3$ contained in the stable locus of $K_v(A,H_i)$. Exchanging $e$ and $f$,  if necessary, we may suppose $a_i>b_i$. 
By the final part of Proposition
\ref{prop:cham}, for $M_1,M_2\in Pic(A)$ such that $c_1(M_1)=e-f$ and $c_1(M_2)=f-e$, the projective space 
$\mathbb{P}(Ext^1(M_2,M_1))$ parametrizes stable sheaves and a straightforward diagram chasing shows that 
it naturally injects as a closed subvariety of the stable locus of  $M_v(A,H_i)$. Since projective spaces are contracted by the Albanese fibration, the first part of Proposition
\ref{prop:cham}  implies that $K_v(A,H_i) \setminus U_v(A,H_i)$ is a union of projective spaces of the form $\mathbb{P}(Ext^1(M_2,M_1))$.

Since $A\times A^{\vee}$ acts transitively on the fibers of the Albanese fibration of $M_v(A,H_i)$ and the action preserves stability 
with respect to every polarization, the variety  $K_v(A,H_i) \setminus U_v(A,H_i)$ is non empty. Finally 
$Ext^1(M_2,M_1)=H^1(M^{\vee}_2\otimes M_1)\simeq \mathbb{C}^4$ and the projective  variety $K_v(A,H_i) \setminus U_v(A,H_i)$ is a union of copies of $\mathbb{P}^3$ contained in the smooth locus of $K_v(A,H_i)$: since $K_v(A,H_i)$ is symplectic of dimension six this union consists of a finite set of copies of $\mathbb{P}^3$.

\end {proof}
\begin{oss} A more detailed analysis shows that  $K_v(A,H_i) \setminus U_v(A,H_i)$ consists of $256$ copies of $\mathbb{P}^3$.
\end{oss}
Corollary \ref{cor:cham} allows to improve Proposition \ref{daabasing} in the case where $v=(2,0,-2)$.
\begin{prop}\label{daabasing+}
Keep notation as in Setting \ref{set}. Suppose  that 
$v=(2,0,-2)$. Then, using the identification (\ref{CohMuk}),
$$SO^+(H^{2}(A,\mathbb{Z}))\subseteq Mon^2(K_v(A,H))_{lt}^{pr}.$$
\end{prop}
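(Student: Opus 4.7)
The plan is to combine Proposition~\ref{daabasing}, applied to several $v$-generic polarizations on a product of elliptic curves, with the birational identifications of Corollary~\ref{cor:cham} and the lattice-theoretic generating result of Corollary~\ref{abelian_tuttoproj}.

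First I would reduce to the case $A=E_1\times E_2$ with $NS(A)=\ZZ e\oplus \ZZ f$ generated by the classes of the two elliptic curves. Since the moduli space of polarized abelian surfaces of any fixed degree is connected, one can connect $(A,H)$ to such a pair $(A_0,H_0)$ through a family of $v$-generic polarized abelian surfaces. The relative construction employed in the proof of Proposition~\ref{daabasing} yields, over the Zariski open locus where the polarization is $v$-generic, a proper analytically locally trivial projective family of Albanese fibres, and parallel transport along this family canonically identifies $O^+(w^{\perp})$ with its analogue for $(A_0,H_0)$, intertwining both the inclusion of $SO^+(H^2(-,\ZZ))$ and the subgroup $Mon^2(K_v(-,-))^{pr}_{lt}$. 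Hence we may assume $A=E_1\times E_2$.

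Fix $d>5$ so that the four classes $de+f$, $(d+1)e+f$, $e+df$, $e+(d+1)f$ are all $v$-generic (as permitted by Proposition~\ref{prop:cham} and the analogous statement obtained by swapping $e$ and $f$). For each such class $h'$, represented by a $v$-generic polarization $H'$, Proposition~\ref{daabasing} applies trivially since $w_2=0$ is proportional to $h'$, giving $SO^+_{h'}(H^2(A,\ZZ))\subseteq Mon^2(K_v(A,H'))^{pr}_{lt}$. To transfer this inclusion to $Mon^2(K_v(A,H))^{pr}_{lt}$, I would invoke Corollary~\ref{cor:cham}: any two $v$-generic polarizations yield singular symplectic varieties that are either directly isomorphic (case (1)) or related by an isomorphism between open subsets whose complements are finite unions of $\mathbb{P}^3$'s lying in the smooth locus, hence of codimension three (case (2)). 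In both cases the induced isomorphism on $H^2$ coincides with the composition of the two Mukai--Donaldson--Le Potier identifications with $w^{\perp}$ and preserves the locally trivial projective monodromy group.

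Combining these inclusions with Corollary~\ref{abelian_tuttoproj} applied to our four classes immediately yields $SO^+(H^2(A,\ZZ))\subseteq Mon^2(K_v(A,H))^{pr}_{lt}$. The main obstacle I expect is the last step: while the identification of the $H^2$ Hodge structures across the birational modification of Corollary~\ref{cor:cham}(2) follows from the codimension-three property, showing that the locally trivial projective monodromy groups really agree requires exhibiting a locally trivial family realizing the wall-crossing, or invoking an equivalent deformation-theoretic statement for projective primitive symplectic varieties that are isomorphic in codimension two.
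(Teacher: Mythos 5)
Your skeleton is the same as the paper's (reduce to $A=E_1\times E_2$, apply Proposition \ref{daabasing} to the four polarizations $de+f$, $(d+1)e+f$, $e+df$, $e+(d+1)f$, relate the corresponding moduli spaces via Corollary \ref{cor:cham}, and conclude with Corollary \ref{abelian_tuttoproj}), but the step you yourself flag as ``the main obstacle'' is precisely the crux of the proof, and you do not supply it. What has to be shown is that the isomorphism $\phi_-$ of Corollary \ref{cor:cham}(2), defined only off a codimension-three locus, induces not merely a Hodge isometry $\phi_-^*$ on $H^2$ compatible with the Mukai--Donaldson--Le Potier identifications, but an honest parallel transport operator along a proper analytically locally trivial family with projective fibres; only then does conjugation by it carry $Mon^2(K_v(A,H'))^{pr}_{lt}$ into $Mon^2(K_v(A,H))^{pr}_{lt}$. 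The paper settles this by using that $K_v(A,H)$ and $K_v(A,H')$ are $2$-factorial (\cite[Theorem 1.2]{perapimrn}) and then applying Theorem 1.1 of \cite{lehnpacienza} (see also \cite[Theorem 6.17]{bakkerlehn}), which produces one-parameter projective, analytically locally trivial deformations $\mathcal{K}\rightarrow D$ and $\mathcal{K}'\rightarrow D$ together with a birational map over $D$ restricting to $\phi_-$ over $0$ and to an isomorphism over $D\setminus\{0\}$ --- exactly the ``locally trivial family realizing the wall-crossing'' you say would be needed. Without this input the argument does not close, since a Hodge isometry compatible with the $\nu_{v,\cdot}$ maps is not a priori a locally trivial projective monodromy operator. (Also, the compatibility $\nu_{v,H}=\phi_-^*\circ\nu_{v,H'}$ is asserted but itself needs the argument via injectivity of restriction to the smooth part of the common open locus, \cite[Lemma 3.7]{pr_crelle}, and the observation that a quasi-universal family on one side is quasi-universal on the other; this is a minor point.)

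Your opening reduction is also too quick. A family of polarized abelian surfaces preserves the degree of the polarization, so you can connect $(A,H)$ to $(E_1\times E_2, de+f)$ with $d>5$ only when $h^2=2d>10$; for $h^2\le 10$ no such target exists, and $e+f$ itself is not $(2,0,-2)$-generic. The paper handles the general case in two extra steps: first it proves the statement for every $(A'',h'')$ with $h''$ primitive, $v''$-generic and of degree $>10$, by connecting to the product case of equal degree and conjugating by a parallel transport operator that extends to $H^{ev}$ (trivial on $H^0\oplus H^4$, hence carrying $SO^+(H^2)$ to $SO^+(H^2)$); then, for arbitrary degree, it uses density of the Noether--Lefschetz locus to deform $(A,h)$ to a surface of Picard rank at least $2$, replaces the polarization by a $v$-generic one of degree $>10$ in the same $v$-chamber (so that $K_v$ and the Mukai--Donaldson--Le Potier morphism are unchanged), and transfers back. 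Your proposal needs this, or an equivalent argument, to cover small-degree polarizations.
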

\begin{proof} We first analyse the case where $A=E_1\times E_2$, $H$ and $h=de+f$ are as in Proposition \ref{prop:cham}.
By Corollary \ref{abelian_tuttoproj} it suffices to prove that $$\bigcup_{i=d}^{d+1}SO^{+}_{ie+f}(H^2(A,\mathbb{Z}))\cup \bigcup_{i=d}^{d+1} SO^{+}_{e+if}(H^2(A,\mathbb{Z}))\subseteq Mon^2(K_v(A,H))_{lt}^{pr}.$$ 
By Proposition \ref{daabasing}, 
$$SO^{+}_{de+f}(H^2(A,\mathbb{Z})) \subseteq Mon^2(K_v(A,H))_{lt}^{pr}.$$ By (1) of Corollary \ref{cor:cham} the varieties  $K_v(A,H)$ and $K_v(A,H+E_1)$parametrize the same set of sheaves, hence they can be identified and  this identification is compatible with the Mukai-Donaldson-Le Potier morphisms.  By Proposition \ref{daabasing}
$$SO^{+}_{(d+1)e+f}(H^2(A,\mathbb{Z})) \subseteq Mon^2(K_v(A,H+E_1))_{lt}^{pr}= Mon^2(K_v(A,H))_{lt}^{pr}.$$
Let now $H'$ be an ample line bundle on $A$ whose class $h'$ equals either $e+df$ or $e+(d+1)f$. Since,  
by Proposition \ref{daabasing}, $SO^{+}_{h'}(H^2(A,\mathbb{Z})) \subseteq Mon^2(K_v(A,H'))_{lt}^{pr}$,
it remains  to show that there exists a parallel  transport operator 
$$\psi:H^2(K_v(A,H'),\mathbb{Z})\rightarrow H^2(K_v(A,H),\mathbb{Z})$$ along a proper analitically locally trivial family with projective fibers commuting with the Mukai-Donaldson-Le Potier morphisms of $K_v(A,H)$ an $K_v(A,H')$, i.e. satisfying \begin{equation}\label{MDL}\nu_{v,H}=\psi\circ\nu_{v,H'}.\end{equation} 

To define $\psi$ we use case (2) of Corollary \ref{cor:cham}: since  
$ K_v(A,H) \setminus U_v(A,H)$ and $ K_v(A,H')\setminus U_v(A,H')$ are contained in the smooth loci of 
 $ K_v(A,H)$ and $ K_v(A,H')$ and have codimension $3$, the morphism $\phi_{-}$  in  case (2) of Corollary \ref{cor:cham} induce an isomorphism 
$\phi_{-}^{*}:H^2(K_v(A,H'),\mathbb{Z})\rightarrow H^2(K_v(A,H),\mathbb{Z})$ and we set $\psi:=\phi_{-}^{*}$.

Let  $U^s_v(A,H)$ 
be the smooth locus of $U_v(A,H)$ 
and let $i:U^s_v(A,H)\rightarrow K_v(A,H)$ be the open embedding. By Lemma 3.7 of \cite{pr_crelle}
the pull-back $i^{*}:H^2(K_v(A,H),\mathbb{Z})\rightarrow H^2(U^s_v(A,H),\mathbb{Z})$ is injective
and (\ref{MDL}) follows from  \begin{equation}\label{MDL1} 
i^{*}\circ\nu_{v,H}=i^{*}\circ\psi\circ\nu_{v,H'}.
\end{equation}
By definiton of the Mukai-Donaldson-Le Potier morphism ($\S 3.2$ of \cite{pr_crelle}),  the composition $i^{*}\circ\nu_{v,H}$
is uniquely determined by any quasi-universal family on the product $A\times U^s_v(A,H)$.
Since, by case (2) of Corollary \ref{cor:cham}, the morphism $\phi_{-}$ identifies $U^s_v(A,H)$ and $U^s_v(A,H')$ and a quasi-universal family on $A\times U^s_v(A,H')$ is also quasi-universal family on $A\times U^s_v(A,H)$, equations (\ref{MDL}) and (\ref{MDL1}) hold. 

In order to show that $\psi$ is a parallel  transport operator 
 along a proper analitically locally trivial family with projective fibers, 
since  $K_{v}(A,H)$ and $K_{v}(A,H')$ are 2-factorial (\cite[Theorem 1.2]{perapimrn}), we can apply
 Theorem 1.1 of \cite{lehnpacienza} (see also \cite[Theorem 6.17]{bakkerlehn}). It implies that there exist one parameter projective analically locally  trivial deformations $\mathcal{K}\rightarrow D$ and $\mathcal{K}'\rightarrow D$ over a small disk  $D$, having $K_{v}(A,H)$ and $K_{v}(A,H')$ as fibres over $0\in D$ and  there exists a bimeromorphic morphism 
$$\phi_{D}:\mathcal{K}\dashrightarrow  \mathcal{K}'$$ over $D$,
well defined outside the indeterminacy locus of $\phi_{-}$, restricting to an isomorphism over $D\setminus\left\{0\right\}$
and to the morphism $\phi_{-}$ over $0$.
As a consequence, $\psi$ is the desired 
parallel transport operator and the proposition is proven in the case where $A$ and $H$ are as in
Proposition \ref{prop:cham}.

We now consider an abelian surface $A''$ with an ample line bundle $H''$ whose class $h''$ is primitive and such that  $h^{2}=h''^{2}>10$.
We set $v'':=(2,0,-2)\in H^{ev}(A'',\mathbb{Z})$ and assume that $H''$ is $v''$-generic.
By connectedness of the moduli space of polarized abelian surfaces there exists a family, over a connected base, of polarized abelian surfaces having $(A,h)$ and $(A'',h'')$ as fibers.
Since the construction of the Mukai-Donaldson-Le Potier morphism works in families where the 
Mukai vector $v$ stays algebraic   and the polarization stays algebraic and $v$-generic
\cite[\S 3.2]{pr_crelle}, there exists a parallel transport operator $t:H^{2}(K_v''(A'',h''),\ZZ)\rightarrow H^{2}(K_v(A,H),\ZZ)$
that extends to a monodromy operator $\tilde{t}:H^{ev}(A'',\ZZ)\rightarrow H^{ev}(A,\ZZ)$ associated with a family of polarized abelian surfaces. Since the monodromy on a family of abelian surfaces acts trivially on $H^{0}$ and 
$H^{4}$, 
and $SO^{+}(H^{2}(A'',\ZZ))$ is a normal subgroup of $O(H^{2}(A'',\ZZ))$, it follows that 
$$t^{-1}\circ SO^{+}(H^{2}(A,\ZZ))\circ t:=\left\{t^{-1}\circ \gamma\circ t\; : \; \gamma \in SO^{+}(H^{2}(A,\ZZ)) \right\}=SO^{+}(H^{2}(A'',\ZZ))$$ and $SO^{+}(H^{2}(A'',\ZZ))\subseteq Mon^2(K_v''(A'',H''))_{lt}^{pr}$.
 
In the general case, where $A$, $H$ and $h$ are as in the statement,
by density of the Noether Lefschetz locus, there exists a projective small deformation $(A'',\underline{h})$ of $(A,h)$ such that $NS(A'')$ has rank at least $2$. Letting  $v''$ and  $\underline{h}$ be the parallel transports of  $h$ and $v$,
by openness of $v$-genericity (see the appendix of \cite{pr_crelle}),  we may assume that the parallel transport  $\underline{h}$ of $h$ represents a $v''$-generic polarization $\underline{H}$ on $A''$.
Since $rk(NS(A''))\ge2$ there exists a  $v''$-generic polarization   $H''$ of  degree bigger than $10$ in the same $v''$-chamber of $\underline{H}$:
hence we have the identifications $K_{v''}(A'',\underline{H})=K_{v''}(A'',H'')$ and  $\nu_{v'',\underline{H}}=\nu_{v'',H''}$ and we obtain $$SO^{+}(H^{2}(A'',\ZZ))\subseteq Mon^2(K_{v''}(A'',H''))_{lt}^{pr}=Mon^2(K_{v''}(A'',\underline{H}))_{lt}^{pr}.$$
Finally, as there exists a family, over a connected base, of polarized abelian surfaces having $(A,h)$ and $(A'',\underline {h})$ as fibers, arguing as above we get $$SO^{+}(H^{2}(A,\ZZ))\subseteq Mon^2(K_{v}(A,H))_{lt}^{pr}.$$
\end{proof}

\subsection{Yoshioka's isomorphisms of moduli spaces}\label{YO}   
In this subsection we recall and adapt to our context, where necessary,  
two results on isomorphisms between moduli spaces 
exhibited by Yoshioka in $\S 3$ of \cite{yoshi_moduli}. 

The induced morphisms in cohomology are conveniently described by using, for every $v=2w$ as in our assumptions, the Mukai-Donaldson-Le Potier morphism 
$$\nu_{v,H}: w^{\perp}\rightarrow H^{2}(K_{v}(A,H),\mathbb{Z})$$
introduced in the previous subsection.

The isomorphism $\psi$ of the following proposition will allow to construct
monodromy operators in $Mon^2(K_v(A,H))^{pr}_{lt}\setminus O^{+}(H^{2}(A,\ZZ))$, i.e. monodromy operators that cannot be obtained  by deforming  the underlying abelian surface.
 
\begin{prop}\cite[Thm. 3.15]{yoshi_moduli}\label{prop:yoshicambia}
Let $A$, $H$ and $h$ be as in Setting \ref{set}. Assume futher that $A=E_1\times E_2$ is an abelian surface that is the product of a very general pair $(E_1,E_2)$ of elliptic curves  and let  $e$ and $f\in H^{2}(A,\ZZ)$ be the classes of the factors.  If  $h=e+kf$ for $k>>0$,  the following hold:
 
\begin{itemize}
\item[i)] There exists an isomorphism $\psi: K_{(2,0,-2)}(A,H)\rightarrow K_{(0,2e+2f,2)}(A,H).$
\item[ii)] There exists a commutative diagram \begin{equation} \label{cohom}
\xymatrix{
(1,0,-1)^{\perp} \ar[d]_{\nu_{(2,0,-2),H}} \ar[r]^{\phi} & (0,e+f,1)^{\perp} \ar[d]_{\nu_{(0,2e+2f,2),H}}\\
 H^2(K_{(2,0,-2)}(A,H)) \ar[r]^{\psi_*} & H^2(K_{(0,2e+2f,2)}(A,H))}
\end{equation}
where 
$\phi:(1,0,-1)^{\perp}\rightarrow (0,e+f,1)^{\perp}$ is the isometry
given by 
\begin{equation}\label{varphiespl}
\varphi((r,ae+bf+\alpha,s))=(-a,re-(s+a)f+\alpha,r+b)
\end{equation}
for $\alpha \in H^{2}(A,\ZZ)$  perpendicular to both $e$ and $f$.
\end{itemize}

\end{prop}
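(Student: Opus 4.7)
The plan is to deduce both parts of the proposition directly from Yoshioka's Theorem 3.15 of \cite{yoshi_moduli}, which furnishes a Fourier-Mukai transform $\Phi_{\mathcal{E}}\colon D^b(A)\to D^b(A)$ whose kernel $\mathcal{E}$ is constructed from the Poincar\'e line bundle on $E_2\times \widehat{E_2}$ pulled back and twisted by line bundles on the two elliptic factors. For $k\gg 0$ and $(E_1,E_2)$ very general, Yoshioka shows that every $H$-semistable sheaf with Mukai vector $(2,0,-2)$ is $\mathrm{WIT}_1$ with respect to $\Phi_{\mathcal{E}}$ and that the resulting transform realises an isomorphism $M_{(2,0,-2)}(A,H)\simeq M_{(0,2e+2f,2)}(A,H)$ of moduli spaces. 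Since $\Phi_{\mathcal{E}}$ intertwines the natural $A\times A^{\vee}$-actions on both moduli spaces, it descends to the Albanese fibres and produces the desired isomorphism $\psi$ in (i).

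For (ii) I would use the general compatibility between the Mukai-Donaldson-Le Potier morphism and Fourier-Mukai transforms. Let $U^s\subset K_{(2,0,-2)}(A,H)$ be the smooth stable locus and let $\mathcal{U}$ be a quasi-universal family on $A\times U^s$. Applying $\Phi_{\mathcal{E}}$ in the first factor yields a quasi-universal family $\Phi_{\mathcal{E}}(\mathcal{U})$ on $A\times \psi(U^s)$ for the Mukai vector $(0,2e+2f,2)$. Since $\nu_{\cdot,H}$ is defined via slant product with such a family (cf.~\S 3.2 of \cite{pr_crelle}), the two compositions $\psi_{*}\circ \nu_{(2,0,-2),H}$ and $\nu_{(0,2e+2f,2),H}\circ\Phi^H_{\mathcal{E}}|_{(1,0,-1)^{\perp}}$ coincide after restriction to $U^s$, where $\Phi^H_{\mathcal{E}}$ denotes the cohomological Fourier-Mukai action. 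By Lemma 3.7 of \cite{pr_crelle} the pullback $i^{*}\colon H^2(K_{(2,0,-2)}(A,H),\mathbb{Z})\to H^2(U^s,\mathbb{Z})$ is injective, so commutativity on $U^s$ upgrades to commutativity of the full square, with $\varphi:=\Phi^H_{\mathcal{E}}|_{(1,0,-1)^{\perp}}$.

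It remains to check that this $\varphi$ is given by the explicit formula \eqref{varphiespl}. This is a direct Chern-character computation: one writes $\operatorname{ch}(\mathcal{E})$ in terms of $e$, $f$ and the Poincar\'e class on $E_2\times\widehat{E_2}$, and evaluates $\Phi^H_{\mathcal{E}}(r,ae+bf+\alpha,s)=p_{2*}\bigl(p_1^{*}(r,ae+bf+\alpha,s)\cdot \operatorname{ch}(\mathcal{E})\bigr)$ component by component. The condition of lying in $(1,0,-1)^{\perp}$ forces $r=s$, and one reads off precisely $(-a,\,re-(s+a)f+\alpha,\,r+b)$ after projection.

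The main obstacle — already resolved in \cite[Thm.~3.15]{yoshi_moduli} and therefore only imported here — is the preservation of $H$-semistability under $\Phi_{\mathcal{E}}$, equivalently the $\mathrm{WIT}_1$ property of every $H$-semistable sheaf with Mukai vector $(2,0,-2)$. This relies crucially on the hypothesis $k\gg 0$ and on the very general choice of $(E_1,E_2)$, which through Proposition \ref{prop:cham} controls all potentially destabilizing subsheaves and forces the slope estimates needed for the transform to land in pure one-dimensional semistable sheaves of the correct Mukai vector; our contribution at this step is limited to the bookkeeping that matches the cohomological output with the explicit isometry $\varphi$.
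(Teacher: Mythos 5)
Your overall strategy coincides with the paper's: part (i) is imported from Yoshioka's Theorem 3.15 via a Fourier--Mukai kernel built from the Poincar\'e bundle on the second elliptic factor, and part (ii) is reduced, via quasi-universal families and the injectivity of $i^{*}\colon H^2(K_{(2,0,-2)}(A,H),\ZZ)\to H^2(U^s,\ZZ)$ (Lemma 3.7 of \cite{pr_crelle}), to a cohomological computation of the kernel's action, exactly as in the paper (which adapts the proof of \cite[Proposition 2.4]{yoshi_moduli} to the non-primitive vector $2w$ on the stable locus). So the route is the same; the problem is in your ``bookkeeping'' step, which is precisely where the delicate point lies.

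The gap is a sign that your argument both needs and contradicts. Since every $H$-semistable sheaf $E$ with Mukai vector $(2,0,-2)$ satisfies the weak index theorem with \emph{odd} index (index one), the transform $\Phi_{\mathcal{E}}(\mathcal{U})$ of a quasi-universal family is a complex concentrated in degree one, and the actual quasi-universal family for $(0,2e+2f,2)$ is its first cohomology sheaf, whose Chern character is \emph{minus} $\ch(\Phi_{\mathcal{E}}(\mathcal{U}))$. Consequently the MDL morphisms intertwine $\psi_{*}$ with $-\Phi^{H}_{\mathcal{E}}|_{(1,0,-1)^{\perp}}$, not with $\Phi^{H}_{\mathcal{E}}|_{(1,0,-1)^{\perp}}$ as you assert. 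One can see the inconsistency already on Mukai vectors: the cohomological action of the kernel sends $(2,0,-2)$ to $-(0,2e+2f,2)$ (in the paper's computation, $\psi^{H^{ev}}(r,ae+bf+\alpha,s)=(a,-re+(s+a)f+\alpha,-(b+r))$), so the direct Chern-character evaluation you describe yields the \emph{negative} of formula \eqref{varphiespl} (e.g. it sends $(1,0,1)$ to $-(0,e-f,1)$), not ``precisely'' that formula. The paper fixes this by defining $\varphi$ as the \emph{opposite} of the restriction of the cohomological action and by justifying the minus sign through the odd-index argument of \cite[Proposition 2.4]{yoshi_moduli}; without that step your diagram \eqref{cohom} does not commute as stated. (Minor further points: identifying $\Phi_{\mathcal{E}}(\mathcal{U})$ itself with a quasi-universal sheaf family conflates a complex with its degree-one cohomology, which is exactly the source of the sign; and the preservation of semistability in Yoshioka's theorem does not rest on Proposition \ref{prop:cham}, which concerns a different polarization regime.)
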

\begin{proof}
Let $\mathcal{P}$ be the normalized Poincaré line bundle on 
$E_2\times E_2$
and let $N$ be a  line bundle of degree one 
on $E_1$. Set $\mathcal{Q}:=p_{2,3}^{*}(\mathcal{P})\otimes p_1^{*}N$ 
where the morphisms\\ $p_{2,3}:E_1\times E_2\times E_2\rightarrow E_2\times E_2$ 
and $p_{1}:E_1\times E_2\times E_2\rightarrow E_1$ are 
the projections. Since  $E_1\times E_2\times E_2$ is 
naturally identified with the fibre 
product $A\times_{E_1}A\subset A\times A$, 
the sheaf $\mathcal{Q}$ can be regarded as a coherent sheaf 
on $A\times A$. 

By \cite[Thm. 3.15]{yoshi_moduli} the sheaf 
$\mathcal{Q}$ is the kernel of  a 
Fourier-Mukai transform satisfying 
the weak index theorem with 
index one on every semistable sheaf 
in $M_{(2,0,-2)}(A,H)$ and inducing  the isomorphism
$$\psi: K_{(2,0,-2)}(A,H)\rightarrow K_{(0,2e+2f,2)}(A,H).$$

In order to prove commutativity of diagram  (\ref{cohom}), we need first to compute the map   $\psi^{H^{ev}}:H^{ev}(A,\ZZ)\rightarrow H^{ev}(A,\ZZ)$  induced on the even cohomology of $A$
by the kernel $\mathcal{Q}$, i.e. 
the map given by 
$$\psi^{H^{ev}}((r, ae+bf+\alpha, s)):= 
p_{1,3*}(ch(\mathcal{Q})p_{1,2}^{*}((r, ae+bf+\alpha, s))),$$ 
where 
$p_{1,j}:E_1\times E_2\times E_2\rightarrow E_1\times E_2$ 
is the projection on the product of the first and the j-th factors.

By definition of $\mathcal{Q}$, we have 
$$\psi^{H^{ev}}((r,ae+bf+\alpha, s))=p_{1,3*}
(ch(p_{2,3}^{*}(\mathcal{P})\otimes p_1^{*}N)
p_{1,2}^{*}((r, ae+bf+\alpha, s)))=$$
$$
p_{1,3*}((p_{2,3}^{*}(ch(\mathcal{P}))
p_{1,2}^{*}((r, ae+bf+\alpha, s)(1,f,0)))=$$
$$ p_{1,3*}
((p_{2,3}^{*}(ch(\mathcal{P}))
p_{1,2}^{*}((r, ae+(b+r)f+\alpha, s+a)))$$ 
and, using the K\"unneth decomposition of the classes 
$1, e, f, \alpha$  and of the Poincar\'e dual of a point 
for the product $A=E_1\times E_2$   
and letting $\mathcal{P}$ act\footnote{Recall that $\mathcal{P}$ sends the class of a point to the fundamental class of $E_2$ and acts as $-1$ on $H^1$.} on the factors 
coming from the cohomology of $E_2$, we get 
$$ p_{1,3*}((p_{2,3}^{*}(ch(\mathcal{P}))
p_{1,2}^{*}((r, ae+(b+r)f+\alpha, s+a)))=$$$$
(a, -re+(s+a)f+-\alpha, -(b+r)).$$
It follows that 
\begin{equation}\label{-cohom}
\psi^{H^{ev}}((r, ae+bf+\alpha, s))=
(a, -re+(s+a)f+\alpha, -(b+r)).
\end{equation}
To deduce the commutativity of diagram  (\ref{cohom}) from formula (\ref{-cohom}) 
we notice that $\phi$ equals the opposite of the restriction 
$\psi^{H^{ev}}_{|(1,0,-1)^{\perp}}:(1,0,-1)^{\perp}\rightarrow (0,e+f,1)^{\perp}$
of   $\psi^{H^{ev}}$.
We recall that, 
by \cite[Lemma 3.7]{pr_crelle}, the open embedding  of the stable locus  
$i_{(2,0,-2)}:K_{(2,0,-2)}^{s}(A,H)\rightarrow  K_{(2,0,-2)}(A,H)$ and the analogous inclusion   
$i_{(0,2e+2f,2)}:K_{(0,2e+2f,2)}^s(A,H) \rightarrow  K_{(0,2e+2f,2)}(A,H)$ induce injective 
maps on  2-cohomology groups: hence it suffices to show the commutativity of the following diagram \begin{equation}
\label{meno}
\xymatrix{
(1,0,-1)^{\perp} \ar[d]_{i^{*}_{(2,0,-2)}\circ\nu_{(2,0,-2),H}} \ar[r]^{-\psi^{H^{ev}}_{|(1,0,-1)^{\perp}}} & (0,e+f,1)^{\perp} \ar[d]_{i^{*}_{(0,2e+2f,2)}\circ\nu_{(0,2e+2f,2),H}}\\
 H^2(K^s_{(2,0,-2)}(A,H)) \ar[r]^{\psi_*^s} & H^2(K^s_{(0,2e+2f,2)}(A,H))}
\end{equation}
where $\psi^s:K_{(2,0,-2)}^{s}(A,H)\rightarrow K_{(0,2e+2f,2)}^s(A,H)$ is the restriction
of $\psi$ to the stable locus.  

The commutativity of  diagram (\ref{meno}) follows by copying the proof of  
\cite[Proposition 2.4]{yoshi_moduli}.
This Proposition is stated only for primitive Mukai vectors, but its proof works in our case since, on the stable loci, the vertical arrows 
can be computed by the same formula
defining $\nu_{v',H}$ for primitive $v'$ (see $\S 3.2$ of \cite{pr_crelle}).
Finally, as in  \cite[Proposition 2.4]{yoshi_moduli},  we have the  minus in diagram  (\ref{meno}) because the sheaves parametrized by  $M_{(2,0,-2)}^{s}(A,H)$
satisfy the weak index theorem with odd index.  
\end{proof}

Using the identification (\ref{CohMuk}) 
we see 
$SO^{+}_{e+f}(H^{2}(E_1\times E_{2},\ZZ))$ as a subgroup of $O^{+}((0,e+f,1)^{\perp})$ and, keeping notation as in the previous proposition, we  set
$$\phi^{-1}\circ SO^{+}_{e+f}(H^{2}(E_1\times E_{2},\ZZ))\circ \phi:=$$$$
\left\{ \phi^{-1}\circ g\circ\phi|\; g\in SO^{+}_{e+f}(H^{2}(E_1\times E_{2},\ZZ))\subset O^{+}((1,0,-1)^{\perp}) \right\}.$$
As a consequence of Proposition \ref{prop:yoshicambia} we 
get the following corollary
\begin{cor} \label{casoExE} Under the assumption of Proposition \ref{prop:yoshicambia}, the union of subgroups 
$$SO^{+}(H^{2}(E_1\times E_{2},\ZZ))\cup \phi^{-1}\circ SO^{+}_{e+f}(H^{2}(E_1\times E_{2},\ZZ))\circ \phi$$ is contained in  
$$Mon^2(K_{2,0-2}(E_1\times E_2,H))^{pr}_{lt}\subseteq O^{+}((1,0,-1)^{\perp}).$$ 
\end{cor}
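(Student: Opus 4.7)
The first part, $SO^{+}(H^{2}(E_1\times E_{2},\ZZ))\subseteq Mon^2(K_{(2,0,-2)}(E_1\times E_{2},H))^{pr}_{lt}$, is just Proposition \ref{daabasing+} applied to the present data. The bulk of the corollary is therefore the inclusion
\begin{equation*}
\phi^{-1}\circ SO^{+}_{e+f}(H^{2}(A,\ZZ))\circ \phi\subseteq Mon^2(K_{(2,0,-2)}(A,H))^{pr}_{lt},
\end{equation*}
which the plan is to deduce from Proposition \ref{daabasing} by conjugating through Yoshioka's isomorphism $\psi$ of Proposition \ref{prop:yoshicambia}. Since $\psi:K_{(2,0,-2)}(A,H)\to K_{(0,2e+2f,2)}(A,H)$ is an isomorphism of projective varieties, the map $g\mapsto \psi_*^{-1}\circ g\circ \psi_*$ is a group isomorphism between the two locally trivial projective monodromy groups, and the commutativity of (\ref{cohom}) shows that, once both groups are identified with subgroups of $O^+((1,0,-1)^\perp)$ and $O^+((0,e+f,1)^\perp)$ via the Mukai--Donaldson--Le Potier isomorphisms (\ref{CohMuk}), this conjugation becomes conjugation by $\phi$. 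Hence the desired inclusion is equivalent to
\begin{equation*}
SO^{+}_{e+f}(H^{2}(A,\ZZ))\subseteq Mon^2(K_{(0,2e+2f,2)}(A,H))^{pr}_{lt}.
\end{equation*}

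To prove this last inclusion I would apply Proposition \ref{daabasing} to the Mukai vector $v':=(0,2e+2f,2)$, for which $w'_2=e+f$: choosing $n$ large enough that the line bundle $H_0$ of class $n(e+f)$ is ample and $v'$-generic, Proposition \ref{daabasing} yields $SO^{+}_{e+f}(H^{2}(A,\ZZ))\subseteq Mon^2(K_{v'}(A,H_0))^{pr}_{lt}$. It then remains to bridge from $H_0$ to $H$. Mimicking the last part of the proof of Proposition \ref{daabasing+}, I would join $h_0$ and $h$ by a path inside the ample cone, subdivide it into finitely many $v'$-chambers, and at each wall-crossing invoke Theorem 1.1 of \cite{lehnpacienza} (using the $2$-factoriality of $K_{v'}(A,\cdot)$ from \cite[Theorem 1.2]{perapimrn}) to realise the corresponding bimeromorphic modification as the central fibre of a one-parameter projective analytically locally trivial deformation. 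Composing the resulting parallel transport operators produces a parallel transport operator $\eta:H^2(K_{v'}(A,H_0),\ZZ)\to H^2(K_{v'}(A,H),\ZZ)$ along an analytically locally trivial projective family which, being induced by isomorphisms of the open stable loci, automatically intertwines $\nu_{v',H_0}$ and $\nu_{v',H}$; conjugation by $\eta$ then transports the inclusion above to the one at $H$.

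The main obstacle I foresee lies in the wall-crossing step. Proposition \ref{prop:cham} and Corollary \ref{cor:cham}, which power the wall-crossing step in the proof of Proposition \ref{daabasing+}, are stated only for $v=(2,0,-2)$, and an analogue for $v'=(0,2e+2f,2)$ is needed, asserting that at each wall crossed along the segment from $h_0$ to $h$ the strictly $v'$-semistable sheaves fit in non-trivial extensions of two pure Mukai vectors of square zero, and that the corresponding bimeromorphic modification of $K_{v'}$ is an isomorphism outside a subvariety of codimension at least two in each of the two smooth loci. This codimension estimate is exactly what is needed to invoke Lehn--Pacienza's extension theorem and to ensure that $\eta$ is well-defined on cohomology. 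I expect it to follow from an analysis of destabilizing subsheaves analogous to the proof of Proposition \ref{prop:cham}, but this verification is the genuinely non-trivial ingredient of the argument.
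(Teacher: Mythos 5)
Your reduction is the same as the paper's: the first inclusion is Proposition \ref{daabasing+}, and conjugating through Yoshioka's isomorphism via the commutative diagram (\ref{cohom}) reduces everything to showing $SO^{+}_{e+f}(H^{2}(A,\ZZ))\subseteq Mon^2(K_{(0,2e+2f,2)}(A,H))^{pr}_{lt}$. The gap is in how you propose to prove this last inclusion. Your starting point is that for $n\gg 0$ the class $n(e+f)$ is ample and $(0,2e+2f,2)$-generic, so that Proposition \ref{daabasing} applies directly; but $v$-genericity for a rank-zero Mukai vector is a condition on the \emph{ray} of the polarization in the ample cone (the walls are cut out by linear equations in $h$), so no positive multiple of $e+f$ can be $v'$-generic if $e+f$ is not --- and it is not, as the paper points out explicitly in this proof. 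Concretely, with $v'=(0,2e+2f,2)$ and $h_0$ proportional to $e+f$, a subsheaf with Mukai vector $(0,2e,1)$ (supported on two copies of $E_1$ inside a curve of class $2e+2f$) has the same reduced slope $1/2$ as $v'$, and $(0,2e,1)$ is not proportional to $w'=(0,e+f,1)$; so every multiple of $e+f$ lies on a $v'$-wall on this particular $A$. Hence the inclusion you want to bridge \emph{from} is not available, independently of the wall-crossing difficulties you flag afterwards (which are themselves not minor: for the rank-zero vector the modifications across walls need not be small, so the codimension-two input for the Lehn--Pacienza step is not granted by anything resembling Proposition \ref{prop:cham}).

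The paper's proof circumvents exactly this obstruction by deforming the \emph{abelian surface} rather than the polarization. It first deforms $E_1\times E_2$ (keeping $e,f$ algebraic) to an $A'$ of Picard rank at least $3$, chooses by boundedness a polarization $\underline{H}'$ so that the saturation $L$ of $\langle e'+f',\underline{h}'\rangle$ meets the finite set of subcurve classes of $2e'+2f'$ only in $e'+f'$ and $2e'+2f'$, then deforms again to an $A''$ with $NS(A'')\cong L$. On $A''$ every polarization is $v''$-generic, so an ample class $\Gamma$ representing $\gamma$ (the transport of $e+f$) is $v''$-generic and in the same chamber as $\underline{H}''$; Proposition \ref{daabasing} applies there, and the inclusion is carried back to $(A,H)$ by parallel transport along families of polarized abelian surfaces, using that the Mukai--Donaldson--Le Potier morphism works in families where the Mukai vector and the $v$-genericity of the polarization persist. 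If you want to salvage your plan, this genericity failure at $e+f$ is the point you must address, and deforming the surface (not enlarging the polarization) is what resolves it.
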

\begin{proof} The inclusion $SO^{+}(H^{2}(E_1\times E_{2},\ZZ))\subseteq Mon^2(K_{2,0-2}(E_1\times E_2,H))^{pr}_{lt}$ follows from (2) 
of Proposition \ref{daabasing}. Commutativity of diagram (\ref{cohom}) implies that
$$\phi^{-1}\circ g\circ\phi\in Mon^2(K_{(2,0-2)}(E_1\times E_2,H))^{pr}_{lt}\subseteq O^{+}((1,0,-1)^{\perp})$$
for every $g\in Mon^2(K_{(0,2e+2f,2)}(E_1\times E_2,H))^{pr}_{lt}\subseteq O^{+}((0,e+f,1)^{\perp})$.
It remains to show that  
$$SO^{+}_{e+f}(H^{2}(E_1\times E_{2},\ZZ))\subseteq Mon^2(K_{(0,2e+2f,2)}(E_1\times E_2,H))^{pr}_{lt}.$$
This does not follows directly from Proposition \ref{daabasing}  since, under the assumptions of 
Proposition \ref{prop:yoshicambia}, the intermediate component $2e+2f$ of the Mukai vector  
$(0,2e+2f,2)$ is not a multiple of the 
$(0,2e+2f,2)$-generic polarization $e+kf$ and, moreover, $e+f$ is not a 
$(0,2e+2f,2)$-generic polarization.

To deal with this problem we  argue as follows. 
By density of the Noether-Lefschetz locus there exists a small deformation $A'$
of $E_1\times E_2$ where both $e+f$ and $e+kf$ remain algebraic (i.e. $e$ and $f$ remain algebraic) along the deformation 
and $NS(A')$ has rank at least $3$. Let $e',f'\in H^{2}(A',\ZZ)$ 
and $v'=(0,2e'+2f',2)\in H^{ev}(A',\ZZ)$ be the parallel transport 
images of $e,f$ and $v=(0,2e+2f,2)$ and let $H'$ be an ample line bundle whose cohomology class is $e'+kf'$. 
By \cite[Corollary 4.2 and Lemma 4.4]{pr_crelle} $H'$ is $v'$-generic.
Since the construction of the Mukai-Donaldson-Le Potier morphism works in families where the 
Mukai vector $v$ stays algebraic   and the polarization stays algebraic and $v$-generic
\cite[\S 3.2]{pr_crelle}, there exists a parallel transport operator $$t:H^{2}(K_{(0,2e+2f,2)}(A,H),\mathbb{Z})\rightarrow H^{2}(K_{(0,2e+2f,2)}(A',H'),\mathbb{Z}),$$ coming from a family with projective fibers, that extends to a parallel transport operator $\tilde{t}:H^{ev}(A,\mathbb{Z})\rightarrow H^{ev}(A,\mathbb{Z})$ induced by a family of polarized abelian surfaces. Since 
$$t^{-1}\circ Mon^{2}(K_{(0,2e+2f,2)}(A',H'))^{pr}_{lt} \circ t\subseteq Mon^{2}(K_{(0,2e+2f,2)}(A,H))^{pr}_{lt}$$ and 
$$t^{-1}\circ  SO^{+}_{e'+f'}(H^{2}(A',\ZZ)) \circ t=SO^{+}_{e+f}(H^{2}(A,\ZZ))$$ it remains to prove that 
$$SO^{+}_{e'+f'}(H^{2}(A',\ZZ))\subseteq Mon^2(K_{(0,2e'+2f',2)}(A',H'))^{pr}_{lt}.$$

By boundedness of the Hilbert scheme the set $S$ of the classes in 
$H^{2}(A',\ZZ)$ that can be represented by subcurves of curves representing $2e'+2f'$ is finite.
Hence, in every open subset of the ample cone of $A'$, there exists a polarization $\underline{H}'$ such that the saturation $L$ of the lattice generated by $e'+f'$ and  the class $\underline{h}'$ of $\underline{H}'$ intersects $S$ only in $e'+f'$ and $2e'+2f'$.
If we take $\underline{H}'$ in the same open chamber of $H'$, we have identifications $K_{(0,2e'+2f',2)}(A',H')=K_{(0,2e'+2f',2)}(A',\underline{H}')$
and $\nu_{(0,2e'+2f',2),H'}=\nu_{(0,2e'+2f',2),\underline{H}'}$: hence it suffices to show that 
\begin{equation}\label{ff}SO^{+}_{e'+f'}(H^{2}(A',\ZZ))\subseteq Mon^2(K_{(0,2e'+2f',2)}(A',\underline{H}'))^{pr}_{lt}.\end{equation}

Let $A''$ be a small deformation of $A'$ such that $L$ remains algebraic along the deformation 
and $NS(A'')$ has rank $2$ (i.e. $L\cong NS(A'')$). 
Let $\gamma$ and $ \underline{h}''\in H^{2}(A'',\ZZ)$ be the parallel transport 
images of $e'+f'$ and $\underline{h}'$, let  $\underline{H}''$ 
be an ample line bundle whose cohomology class is $\underline{h}''$ and set 
$v''=(0,2e''+2f'',2)=(0,2\gamma,2)$.
The same argument used above implies that  
(\ref{ff}) follows from 
\begin{equation}\label{ff1}SO^{+}_{\gamma}(H^{2}(A'',\ZZ))\subseteq Mon^2(K_{(0,2\gamma,2)}(A'',\underline{H}''))^{pr}_{lt}.\end{equation}

By properness of the relative Hilbert scheme and by construction, the only cohomology classes 
of curves that are contained in curves whose class is $2\gamma$, are $\gamma$ and $2\gamma$.
By definition of $v$-genericity for Mukai vectors of dimension $1$ sheaves (see \cite[Definition 2.1]{PerRap}),
every polarization on $A''$ is $v''$-generic: hence an ample line bundle $\Gamma$ representing $\gamma$ and $\underline{H}''$ are in the same $v''$-chamber 
and,  as above, there is an identification 
$Mon^2(K_{(0,2\gamma,2)}(A'',\Gamma))^{pr}_{lt}=Mon^2(K_{(0,2\gamma,2)}(A'',\underline{H}''))^{pr}_{lt}$  
compatible with the Mukai-Donaldson-Le Potier morphisms $\nu_{(0,2\gamma,2),\Gamma}$ and $\nu_{(0,2\gamma,2),\underline{H}''}$. 
By Proposition \ref{daabasing}, $SO^{+}_{\gamma}(H^{2}(A'',\ZZ))\subseteq Mon^2(K_{(0,2\gamma,2)}(A'',\Gamma))^{pr}_{lt}$ 
and we obtain the inclusion (\ref{ff1}).
   
\end{proof}
Using the dual abelian surface and the Poincaré duality morphism, introduced in Remark \ref{dual},
the following proposition shows the existence of an isomorphism $\rho$ that will allow to construct 
elements in $Mon^2(K_v(A,H))^{pr}_{lt}$ with determinant $-1$.

\begin{prop}\cite[Prop. 3.2]{yoshi_moduli}\label{lem:yoshiduale}
Let $(A,h)$ be a  polarized abelian surface such that $NS(A)=\ZZ h$ and $h^{2}=4$ and let $H$ be an ample line bundle on $A$ whose class is $h$. Let $(\widehat{A},\widehat{h})$ be the dual abelian surface of $(A,h)$ and let $\widehat{H}$ be an ample line bundle on $\widehat{H}$
whose class is $\widehat{H}$. 
The following hold:

\begin{itemize}
\item[i)] There exists an isomorphism $\rho: K_{(2,2h,2)}(A,H)\rightarrow K_{(2,2\widehat{h},2)}(\widehat{A},\widehat{H}).$
\item[ii)] There exists a commutative diagram \begin{equation} \label{cohom2}
\xymatrix{
(1,h,1)^{\perp} \ar[d]_{\nu_{(2,2h,2),H}} \ar[r]^{\varrho} & (1,\widehat{h},1)^{\perp} \ar[d]_{\nu_{(2,2\widehat{h},2),\widehat{H}}}\\
 H^2(K_{(2,2h,2)}(A,H)) \ar[r]^{\rho_*} & H^2(K_{(2,2\widehat{h},2)}(\widehat{A},\widehat{H}))}
\end{equation}
where 
$\varrho:(1,h,1)^{\perp}\rightarrow (1,\widehat{h},1)^{\perp}$ is the isometry
given by 
\begin{equation} \label{varrho}
\varrho((r,\alpha,s))=-(s,-PD(\alpha),r).
\end{equation}

\end{itemize}
\end{prop}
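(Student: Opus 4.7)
The plan is to construct $\rho$ as (a restriction of) the Fourier--Mukai transform whose kernel is the normalized Poincaré line bundle $\mathcal{P}$ on $A\times\widehat{A}$, and then to verify the cohomological assertion (ii) along the same lines as in the proof of Proposition \ref{prop:yoshicambia}.

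For (i) I would invoke Yoshioka's argument directly. Since $(2,2h,2)=2\cdot (1,h,1)$ and the primitive vector $(1,h,1)$ is isotropic, under the hypotheses $NS(A)=\ZZ h$ and $h^{2}=4$, every $H$-semistable sheaf $F$ with Mukai vector $(2,2h,2)$ satisfies the weak index theorem with index one for the Fourier--Mukai transform $\Phi_{\mathcal{P}}$, and $\Phi_{\mathcal{P}}(F)$ is $\widehat{H}$-semistable with Mukai vector $(2,2\widehat{h},2)$. Since $\Phi_{\mathcal{P}}$ intertwines the Albanese fibrations of $M_{(2,2h,2)}(A,H)$ and $M_{(2,2\widehat{h},2)}(\widehat{A},\widehat{H})$, restricting to Albanese fibres produces the isomorphism $\rho$.

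For (ii) the strategy is to compute first the map $\psi^{H^{ev}}:H^{ev}(A,\ZZ)\to H^{ev}(\widehat{A},\ZZ)$ induced on even cohomology by the kernel $\mathcal{P}$, and then to propagate the result to second cohomology through the stable locus, exactly as in Proposition \ref{prop:yoshicambia}. Using $\ch(\mathcal{P})$ and the Künneth decomposition of $A\times\widehat{A}$, together with the classical facts that $\Phi_{\mathcal{P}}$ sends the Poincaré dual of a point on $A$ to the fundamental class of $\widehat{A}$, sends the fundamental class of $A$ to (minus) the Poincaré dual of a point on $\widehat{A}$, and acts as $-PD$ on $H^{2}$, one obtains
\begin{equation*}
\psi^{H^{ev}}((r,\alpha,s))=(s,-PD(\alpha),r).
\end{equation*}
Its restriction to $(1,h,1)^{\perp}$ is therefore $-\varrho$. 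Commutativity of diagram \eqref{cohom2} follows from this computation exactly as in the proof of Proposition \ref{prop:yoshicambia}: by \cite[Lemma 3.7]{pr_crelle} the pullback along the open embedding $i:K^{s}_{(2,2h,2)}(A,H)\hookrightarrow K_{(2,2h,2)}(A,H)$ of the stable locus is injective on $H^{2}$, so it suffices to check commutativity on the stable loci, where the Mukai--Donaldson--Le Potier morphism is given by the same formula as in the primitive-vector case and the argument of \cite[Proposition 2.4]{yoshi_moduli} applies verbatim. The global minus sign in the formula for $\varrho$ arises because the sheaves parametrized by $M^{s}_{(2,2h,2)}(A,H)$ satisfy the weak index theorem with odd index one.

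The main obstacle is not the existence of $\rho$, which is Yoshioka's statement, nor the abstract Chern-character computation of $\psi^{H^{ev}}$, which is routine once the action of $\Phi_{\mathcal{P}}$ on each Künneth component is known. Rather it is the careful bookkeeping of signs---in particular the interplay between the orientation conventions implicit in the Poincaré duality morphism $PD$ of Remark \ref{dual}, the action of $\Phi_{\mathcal{P}}$ on middle cohomology, and the parity of the index in the weak index theorem---that must be handled precisely as in \cite[Proposition 2.4]{yoshi_moduli} so that the minus sign in $\varrho(r,\alpha,s)=-(s,-PD(\alpha),r)$ comes out correctly.
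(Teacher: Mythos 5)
There is a genuine gap, and it lies precisely where you delegate: part (i) is not ``Yoshioka's statement''. Yoshioka's \cite[Prop.\ 3.2]{yoshi_moduli} treats the primitive isotropic vector $(1,h,1)$; the vector here is the non-primitive $(2,2h,2)=2(1,h,1)$, whose moduli space is singular, and the bulk of the paper's proof consists exactly in extending Yoshioka's result to this case. One must prove, for \emph{every} $H$-semistable $E$ with $v(E)=(2,2h,2)$, both the weak index theorem and that the transform is again semistable, and the paper does this by a case analysis (strictly semistable sheaves reduce to the $(1,h,1)$ case; stable $E$ with $\Ext^{i}(E,L)=0$ for $i\neq 2$ and all $L\in\widehat{A}$, handled via relative $\mathcal{E}xt$ and base change \cite[Thm 1.4]{Lange}; stable $E$ with $\Ext^{1}(E,L)\neq 0$, split further into locally free $E$, treated via an auxiliary stable rank-three extension and \cite[Thm 3.7]{yoshiFM2}, and non-locally-free $E$, treated via the double dual and \cite[Prop.\ 3.11]{yoshiFM2}), followed by torsion-freeness and stability arguments that use the fact that $\mathcal{G}_{\mathcal{P}}$ is an equivalence. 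None of this is in your proposal, and it is the actual content of the proposition.

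Moreover, your concrete setup would not work as stated. The paper (following Yoshioka) uses the \emph{contravariant} functor $\mathcal{G}_{\mathcal{P}}(\cdot)={\rm R}\mathcal{H}om_{q}(p^{*}(\cdot)\otimes\mathcal{P},\mathcal{O})$ and proves the weak index theorem with index \emph{two}, not the covariant transform $\Phi_{\mathcal{P}}$ with index one. With your choice the Mukai-vector bookkeeping already fails: the cohomological transform exchanges $H^{0}$ and $H^{4}$ and acts by $\mp PD$ on $H^{2}$, so if WIT(1) held the transformed sheaf would have Mukai vector $-\Phi^{H}_{\mathcal{P}}(2,2h,2)$, of rank $-2$ (and without the odd shift the $c_{1}$-component has the wrong sign), so $\Phi_{\mathcal{P}}(F)$ cannot lie in $M_{(2,2\widehat{h},2)}(\widehat{A},\widehat{H})$; this is exactly why the dualized kernel functor is used. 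Correspondingly, your explanation of the sign in $\varrho$ via ``odd index one'' does not apply here: in this proposition the index is even, and the minus sign comes from taking the opposite of $\rho^{H^{ev}}$ when invoking Yoshioka's \cite[Prop.\ 2.5]{yoshi_moduli} for the contravariant transform (in contrast with Proposition \ref{prop:yoshicambia}, where the odd-index argument via \cite[Prop.\ 2.4]{yoshi_moduli} is the right one). The part of your plan concerning the reduction to the stable locus via \cite[Lemma 3.7]{pr_crelle} does match the paper, but it is downstream of the missing work.
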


\begin{proof}
Let $\mathcal{P}$ be the Poincaré line bundle on $A\times\widehat{A}$ and  
let $p:A\times\widehat{A}\rightarrow A$ and $q:A\times\widehat{A}\rightarrow \widehat{A}$
be the projections.
Let $\mathcal{G}_{\mathcal{P}}$ be the contravariant equivalence of the derived categories  $D(A)$ and $D(\widehat{A})$ of $A$ and $\widehat{A}$ defined by 
$$\mathcal{G}_{\mathcal{P}}(\cdot):={\rm R}\mathcal{H}om_{q}(p^{*}(\cdot)\otimes \mathcal{P},\mathcal{O})={\rm R}(q_{*}\circ\mathcal{H}om)(p^{*}(\cdot)\otimes \mathcal{P},\mathcal{O}).$$
We are going to show that for every sheaf  $E\in M_{(2,2h,2)}(A,H)$ the complex 
$\mathcal{G}_{\mathcal{P}}(E)$ has non zero cohomology only in degree $2$, i.e. the weak index theorem with index $2$ (WIT(2)) holds for $E$, and 
$H^{2}(\mathcal{G}_{\mathcal{P}}(E))$ is a semistable sheaf in $ M_{(2,2\widehat{h},2)}(\widehat{A},\widehat{H})$.
Since  $\mathcal{G}_{\mathcal{P}}$ is an equivalence, this implies that  sending a sheaf $E$ to  $H^2(\mathcal{G}_{\mathcal{P}}(E))$
gives an isomorphism between $M_{(2,2h,2)}(A,H)$ and $ M_{(2,2\widehat{h},2)}(\widehat{A},\widehat{H})$ and by restriction to the Albanese fibres we get the desired isomorphism 
$$\rho:K_{(2,2h,2)}(A,H)\rightarrow K_{(2,2\widehat{h},2)}(\widehat{A},\widehat{H}).$$ 
  
By \cite[Prop. 3.2]{yoshi_moduli} this result holds for $K_{(2,2h,2)}(A,H)$ replaced by the Albanese fibre $K_{(1,h,1)}(A,H)$
of the moduli space $M_{(1,h,1)}(A,H)$ of $H$-semistable sheaves on $A$ with  Mukai vector $(1,h,1)$ and   
$K_{(2,2\widehat{h},2)}(\widehat{A},\widehat{H})$ replaced by the Albanese fibre $K_{(1,\widehat{h},1)}(\widehat{A},\widehat{H})$
of the moduli space $M_{(1,\widehat{h},1)}(\widehat{A},\widehat{H})$  of $\widehat{H}$-semistable sheaves on $\widehat{A}$ with  Mukai vector $(1,\widehat{h},1)$.
 
Since the Mukai vector of  $\mathcal{G}_{\mathcal{P}}(E)$ only depends on the Mukai vector of $E$, 
it has to be $(2,2\widehat{h},2)$ for every $E\in M_{(2,2h,2)}(A,H)$ if WIT(2) holds for $E$.
By definition  $H^{i}(\mathcal{G}_{\mathcal{P}}(E))$ is 
the relative extension  $\mathcal{E}xt^{i}_{q}(p^{*}(E)\otimes  \mathcal{P},\mathcal{O})$ (see \cite{Lange})  and we need to prove that 
\begin{enumerate}
\item{$\mathcal{E}xt^{i}_{q}
(p^{*}(E)\otimes  \mathcal{P},\mathcal{O})=0$ for $i\ne 2$}
\item{$\mathcal{E}xt^{2}_{q}
(p^{*}(E)\otimes  \mathcal{P},\mathcal{O})$ is a semistable sheaf}
\end{enumerate}
for $E\in M_{(2,2h,2)}(A,H)$.

We distinguish 3 cases
\begin{enumerate}[a)]
\item{$E$ is strictly semistable}
\item{$Ext^{i}(E,L)=0$  for $i\ne2$ and $L\in\widehat{A}$}
\item{$Ext^{1}(E,L)\ne 0$ for some $L\in\widehat{A}$.}
\end{enumerate}

In case a) the sheaf $E$ fits in an exact sequence of the form
$$0\rightarrow E_1\rightarrow E \rightarrow E_2\rightarrow 0$$
with $E_1$ and $E_2$ in $M_{(1,h,1)}(A,H)$: 
1) and 2) follows from  \cite[Prop. 3.2]{yoshi_moduli}.

In case b)  
we study the sheaves 
$\mathcal{E}xt^{i}_{q}(p^{*}(E)\otimes  \mathcal{P},\mathcal{O})$ 
by using the  base change theorem for 
relative Ext-sheaves \cite[Thm 1.4]{Lange}.
By stability  $Hom(E\otimes M,\mathcal{O})=0$ for $M\in \widehat{A}$
and, since  $Ext^{1}(E\otimes M,\mathcal{O})=Ext^{1}(E,M^{\vee})=0$,
we deduce that the dimension of $Ext^{2}(E\otimes M,\mathcal{O})$
is $ext^{2}(E\otimes M,\mathcal{O})=\chi(M)=2$.
Theorem 1.4. of \cite{Lange} implies (1) and that $\mathcal{E}xt^{2}_{q}
(p^{*}(E)\otimes  \mathcal{P},\mathcal{O})$ 
is a rank two vector bundle with Mukai vector $(2,2\widehat{h},2)$.
 
In this case the sheaf $\mathcal{E}xt^{2}_{q}
(p^{*}(E)\otimes  \mathcal{P},\mathcal{O})$ 
is actually stable. A destabilizing quotient 
should be properly contained in a line bundle  
whose first Chern class is $\widehat{h}$: hence, if 
 $\mathcal{E}xt^{2}_{q}
(p^{*}(E)\otimes  \mathcal{P},\mathcal{O})$ is unstable, 
it admits a non zero morphism to a line bundle
$\widehat{G}$ with Mukai vector $(1,\widehat{h},1)$.
By \cite[Prop. 3.2]{yoshi_moduli}, by applying the inverse of $\mathcal{G}_{\mathcal{P}}$, there would 
exist a line bundle $G$ on $A$, with Mukai vector 
$(1,h,1)$, such that 
$\mathcal{G}_{\mathcal{P}}(G)=\widehat{G}$ and, 
since $\mathcal{G}_{\mathcal{P}}$ is an equivalence, 
$G$ would have  a non trivial morphism to $E$ making $E$ unstable.

In case c) let us first assume that $E$ is a vector bundle.
Under this  assumption there exists $L\in\widehat{A}$ and a non trivial extension
\begin{equation}\label{*}0\rightarrow L \rightarrow F\rightarrow E\rightarrow 0\end{equation} 
such that $F$ is stable. 

In fact, as $F$ has rank $3$, its stability may be checked by considering only rank 1 subsheaves and rank 1 quotients.  
The saturation of every rank one subsheaf of $F$ 
is either $L$ or a line bundle that injects into $E$: its first Chern class cannot be strictly positive by stability of $E$.  Every rank 1 quotient of $F$ has to have strictly positive first Chern class by stability of $E$ and because the extension is not trivial. It follows that $F$ 
is a stable sheaf 
with Mukai vector $(3,2h,2)$

By \cite[Thm 3.7]{yoshiFM2}  the sheaf $F$ 
satisfies  WIT(2) with respect to 
$\mathcal{G}_{\mathcal{P}}$ and    
$\widehat{F}:=\mathcal{G}_{\mathcal{P}}(F)$
is a stable sheaf on $\widehat{A}$ with Mukai vector 
$(2,2\widehat{h},3)$.
By applying $\mathcal{G}_{\mathcal{P}}$ to the exact sequence (\ref{*}), since WIT(2) holds for $L$ too, we get 
(1) and the exact sequence 
$$0\rightarrow  \mathcal{E}xt^{2}_{q}
(p^{*}(E)\otimes  \mathcal{P},\mathcal{O})\rightarrow \widehat{F}\rightarrow \mathbb{C}_{L}\rightarrow 0:$$ in particular $\mathcal{E}xt^{2}_{q}
(p^{*}(E)\otimes  \mathcal{P},\mathcal{O})$ is torsion free.
This sheaf is also stable: otherwise it would  admit a non trivial map to a stable sheaf on $\widehat{A}$ with Mukai vector $(1,\widehat{h},1)$ 
and, since $\mathcal{G}_{\mathcal{P}}$ is an equivalence inducing 
an isomorphism between $M_{(1,h,1)}(A,H)$ and $M_{(1,\widehat{h},1)}(A,H)$  (see \cite[Prop. 3.2]{yoshi_moduli}),
this would imply the existence of a non trivial map 
from a  stable sheaf with Mukai vector 
$(1,h,1)$  to $E$, contradicting stability of $E$.

In the remaining case, where $E$ is stable and  non locally free, torsion freeness of $\mathcal{E}xt^{2}_{q}
(p^{*}(E)\otimes  \mathcal{P},\mathcal{O})$ requires a different argument. The  double dual $E^{\vee\vee}$ of the stable sheaf $E$ is a $\mu$-stable vector bundle with Mukai vector $(2,2h,2+i)$ for $i>0$. 
Starting from observing that the dimension of the moduli space of  $H$-semistable sheaves on $A$ with Mukai vector $(2,2h,2+i)$ 
is 
$$\rm{dim}(M_{(2,2h,2+i)}(A,H))=(2,2h,2+i)^2=16-4(2+i)+2=10-4i$$ and can be  non negative only for 
$i\le 2$, one can check that  $E^{\vee\vee}$ fits in an exact sequence of the form
$$ 0\rightarrow L_{1}\otimes 
H\rightarrow 
E^{\vee\vee}\rightarrow L_{2}\otimes H\rightarrow 0$$ 
where $L_{i}\in \widehat{A}$.

As a consequence, using stability of $E$,
the cokernel $E/E^{\vee\vee}$ is the structure sheaf of a length two   
subscheme $Z\in A$ and we have a non trivial extension
$$0\rightarrow  L_{1}\otimes 
H\otimes I_{Z}\rightarrow E\rightarrow L_{2}\otimes H\rightarrow 0,$$ 
where $I_{Z}$ is the sheaf of ideals of $Z$ in $A$.

By \cite[Proposition  3.11]{yoshiFM2} and \cite[Thm 3.7]{yoshiFM2} 
$L_{1}\otimes 
H\otimes I_{Z}$ and $L_2$ 
satisfy the weak index theorem with index two: 
hence the same holds for $E$ and there is  a non trivial extension
$$0\rightarrow B_1 \rightarrow \mathcal{E}xt^{2}_{q}
(p^{*}(E)\otimes  \mathcal{P},\mathcal{O})\rightarrow B_2 \rightarrow 0$$
where $B_1$ and $B_2$ are $H$-stable sheaves on $A$ and their Mukai vectors
are $(2,\widehat{h},1)$ and $(0,\widehat{h},1)$, respectively. 
Since $\mathcal{H}om(B_2,B_1)=0$, the extension induces a non trivial section
of the pure one dimensional sheaf $\mathcal{E}xt^{1}(B_2,B_1)$: as a consequence 
$\mathcal{E}xt^{2}_{q}
(p^{*}(E)\otimes  \mathcal{P},\mathcal{O})$ is a vector bundle 
outside a zero dimensional  subset.

The sheaf $\mathcal{E}xt^{2}_{q}
(p^{*}(E)\otimes  \mathcal{P},\mathcal{O})$ is actually torsion free:
otherwise it would contain the structure 
sheaf of a point as a subsheaf and this is impossible since $\mathcal{G}_{\mathcal{P}}$ induces an isomorphism 
$Hom(\mathbb{C}_p, \mathcal{E}xt^{2}_{q}
(p^{*}(E)\otimes  \mathcal{P},\mathcal{O}))\simeq Hom(E,L_{p})$ and $Hom(E,L_{p})=0$ by stability.

Stability of $\mathcal{E}xt^{2}_{q}
(p^{*}(E)\otimes  \mathcal{P},\mathcal{O})$ follows 
as in the locally free case.

The commutativity of the diagram (\ref{cohom2}) is formally identical to the commutativity of the diagram (\ref{cohom})
in  Proposition \ref{prop:yoshicambia}. In this case the morphism 
$$\rho^{H^{ev}}:H^{ev}(A,\ZZ)\rightarrow H^{ev}(\widehat{A},\ZZ),$$ induced in Cohomology by $\mathcal{G}_{\mathcal{P}}$, satisfies $\rho^{H^{ev}}(r,\alpha,s)=(s,-PD(\alpha),r)$ (see \cite[Lemma 3.1]{yoshi_moduli}) and, using 
\cite[Proposition 2.5]{yoshi_moduli}, we get that the restriction $$\varrho:(1,h,1)^{\perp}\rightarrow (1,\widehat{h},1)^{\perp}$$ of the opposite 
of $\rho^{H^{ev}}$ to $(1,h,1)^{\perp}$ 
makes the diagram (\ref{cohom2}) commutative.
\end{proof}

As a consequence, we get the the existence of a monodromy operator whose determinant is $-1$.
\begin{cor}\label{det-1}.
Under the assumptions of Proposition \ref{lem:yoshiduale},
there exists $$m\in Mon^2(K_{(2,2h,2)}(A,H))^{pr}_{lt}\setminus SO^{+}((1,h,1)^{\perp}).$$
\end{cor}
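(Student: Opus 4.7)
The plan is to construct an element $m\in Mon^{2}(K_{(2,2h,2)}(A,H))^{pr}_{lt}$ with $\det m = -1$ on $(1,h,1)^{\perp}$, by composing the Yoshioka duality isomorphism $\rho$ of Proposition \ref{lem:yoshiduale} with a parallel transport along a family of degree-$4$ polarized abelian surfaces connecting $(A,H)$ to its dual $(\widehat{A},\widehat{H})$.

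First, since both $(A,h)$ and $(\widehat{A},\widehat{h})$ are polarized abelian surfaces of degree $4$, both appear as fibres of the family $f_{4}:\mathcal{A}_{4}\to T_{4}$ of (\ref{intro}) over the connected base $T_{4}$ (Remark \ref{T2D}). Running the relative moduli/Albanese-fibre construction of Proposition \ref{daabasing} with Mukai vector $(2,2h_t,2)$ over the open locus of $v$-generic polarizations yields a proper analytically locally trivial projective family having $K_{(2,2h,2)}(A,H)$ and $K_{(2,2\widehat{h},2)}(\widehat{A},\widehat{H})$ as two of its fibres. A path between these fibres produces a parallel transport operator $t:H^{2}(K_{(2,2h,2)}(A,H),\ZZ)\to H^{2}(K_{(2,2\widehat{h},2)}(\widehat{A},\widehat{H}),\ZZ)$, which under Mukai--Donaldson--Le Potier is identified with the extension $t^{ev}(r,\alpha,s)=(r,t^{2}(\alpha),s)$ of the $H^{2}$-parallel transport $t^{2}$; this satisfies $t^{2}(h)=\widehat{h}$ and has $\det t^{2}=+1$, being the wedge square of an $SL(H^{1})$-operator. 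Realizing $\rho$ as a parallel transport via the deformation argument of Proposition \ref{daabasing+} (using $2$-factoriality of $K_{v}$ and Theorem 6.17 of \cite{bakkerlehn}), I set $m:=t^{-1}\circ\rho_{*}\in Mon^{2}(K_{(2,2h,2)}(A,H))^{pr}_{lt}$. By diagram (\ref{cohom2}), $m$ is identified via Mukai--Donaldson--Le Potier with the restriction to $(1,h,1)^{\perp}$ of the isometry $(t^{ev})^{-1}\circ\varrho:H^{ev}(A,\ZZ)\to H^{ev}(A,\ZZ)$ given by $(r,\alpha,s)\mapsto(-s,(t^{2})^{-1}PD(\alpha),-r)$.

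The determinant computation, which is the heart of the proof, then runs as follows. Regarded as an endomorphism of $H^{ev}(A,\ZZ)$, $(t^{ev})^{-1}\circ\varrho$ preserves $H^{2}$ (acting as $(t^{2})^{-1}\circ PD$) and exchanges $H^{0}$ with $H^{4}$ via the matrix $\bigl(\begin{smallmatrix}\phantom{-}0&-1\\-1&\phantom{-}0\end{smallmatrix}\bigr)$ of determinant $-1$. Shioda's orientation-reversal result (Remark \ref{dual}) gives $\det PD=-1$, so the total determinant equals $(-1)\cdot\det((t^{2})^{-1})\cdot\det(PD) = (-1)(+1)(-1) = +1$. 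Using the equality $PD(h)=-\widehat{h}$ (the sign being forced by the requirement that the underlying cohomological Fourier--Mukai transform send the Mukai vector $(1,h,1)$ to $(1,\widehat{h},1)$), a direct calculation shows $(t^{ev})^{-1}\circ\varrho\,(1,h,1)=-(1,h,1)$, so the map acts as $-1$ on the rank-one sublattice $\langle(1,h,1)\rangle$. Consequently the determinant on the rational complement $(1,h,1)^{\perp}$ equals $(+1)/(-1)=-1$, yielding $m\notin SO^{+}((1,h,1)^{\perp})$ as required.

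The main obstacle is the sign bookkeeping in the last step: converting Shioda's qualitative statement that Poincar\'e duality reverses the canonical orientation on $H^{2}$ into the integral identity $\det PD=-1$, and carefully tracking the signs implicit in the Mukai pairing and in the formula defining $\varrho$. The auxiliary step of realizing $\rho_{*}$ as a parallel transport along a proper analytically locally trivial projective family is routine in view of the $2$-factoriality results already exploited in Proposition \ref{daabasing+}.
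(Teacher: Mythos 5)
Your proposal is correct and follows essentially the same route as the paper's proof: compose the Yoshioka duality $\rho_*$ with a parallel transport along the connected family of degree-$4$ polarized abelian surfaces, extend to $H^{ev}(A,\ZZ)$, and get the determinant by splitting off the $H^0\oplus H^4$ block, using Shioda's orientation-reversal for $PD$ together with orientation-preservation of the abelian-surface parallel transport, and the identity $\widetilde{m}(1,h,1)=-(1,h,1)$. Two minor remarks: since $\rho$ is a biregular isomorphism, the Lehn--Pacienza/Bakker--Lehn deformation step you invoke is superfluous (it is needed in Proposition \ref{daabasing+} only because $\phi_{-}$ is merely birational), and your sign $PD(h)=-\widehat{h}$ is a convention adjustment (the paper sets $\widehat{h}:=PD(h)$), but your bookkeeping is internally consistent and leads to the same conclusion as the paper.
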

\begin{proof}
In order to discuss the monodromy operator $m$ we make use of the inclusion 
$H^{2}(K_{(2,2h,2)}(A,H),\mathbb{Z})=(2,2h,2)^{\perp}\subset H^{ev}(A,\mathbb{Z})$ and the analogous  inclusion
 $H^{2}(K_{(2,2\widehat{h},2)}(\widehat{A},\widehat{H}),\mathbb{Z})=(2,2\widehat{h},2)^{\perp}\subset H^{ev}(\widehat{A},\mathbb{Z})$
induced by the Mukai-Donaldson-Le Potier morphisms.

By connectedness of moduli spaces of polarized abelian surfaces of degree $4$, and using  
the Mukai-Donaldson-Le Potier morphism for families, there exists a parallel transport operator 
$$t:  H^{2}(K_{(2,2\widehat{h},2)}(\widehat{A},\widehat{H}),\mathbb{Z})\rightarrow H^{2}(K_{(2,2h,2)}(A,H),\mathbb{Z}) ,$$
coming from a projective family, that admits an extension  to a parallel transport operator  
$\tilde{t}:H^{ev}(\widehat{A},\ZZ)\rightarrow H^{ev}(A,\ZZ)$ induced by a family of polarized abelian surfaces.

Set $m:=t\circ\rho_*$ and let 
$\widetilde{\varrho}:H^{ev}(A,\ZZ)\rightarrow H^{ev}(\widehat{A},\ZZ)$ be the extension of $\varrho$ defined by the same formula 
(\ref{varrho}), so that $\widetilde{m}:=\tilde{t}\circ \widetilde{\varrho}: H^{ev}(A,\ZZ)\rightarrow H^{ev}(A,\ZZ)$ extends $m$.
By Lemma 3 of \cite{shioda} the morphism $PD:H^{2}(A,\ZZ)\rightarrow H^{2}(\widehat{A},\ZZ)$ is not compatible with the canonical orientations on 
$H^{2}(A,\ZZ)$ and  $H^{2}(\widehat{A},\ZZ)$. On the other hand  
since the restriction $\tilde{t}_2: H^{2}(\widehat{A},\ZZ)\rightarrow  H^{2}(A,\ZZ)$ of $\tilde{t}$ is a parallel transport operator induced by family of abelian surfaces it sends the orientation on $H^{2}(A,\ZZ)$ to the orientation on $H^{2}(\widehat{A},\ZZ)$.
It follows that the determinant of  $\tilde{t}_{2}\circ PD$ is $-1$. By Formula (\ref{varrho}) the determinant of $\widetilde{m}$ is $1$ and since 
$\widetilde{m}((1,h,1))=- (1,h,1)$, its restriction $m$ to $(1,h,1)^{\perp}$ has determinant $-1$.
 \end{proof}

\subsection{Proof of Proposition \ref{propsection}}\label{SM}

The proof of this result contains a computational part. In the following remark we collect elementary facts on the
isometries that we will use.
\begin{oss}\label{oss:contivettore}
For every $w\in H^{ev}(A,\ZZ)$ such that $w^2=2$ and every $\gamma \in O(w^{\perp})$ there exists a unique 
$\widetilde{\gamma}\in O(H^{ev}(A,\ZZ))$ extending $\gamma$ and such that $\widetilde{\gamma}(w)=w$ (see Remark \ref{est2}).
If $w=(1,0,-1)$ we necessarily have $$\widetilde{\gamma}(1,0,1)=(2m+1,2\alpha,2m+1)$$ with 
$\alpha^2=2m(m+1)$ and $\widetilde{\gamma}(1,0,0)=(m+1,\alpha,m)$, $\widetilde{\gamma}(0,0,1)=(m,\alpha,m+1)$. 

Indeed, we can write $\widetilde{\gamma}(1,0,1)=(l,\beta,l)$ as it must be orthogonal to the element 
$\widetilde{\gamma}(1,0,-1)=(1,0,-1)$. Now by linearity we have:
$$ \widetilde{\gamma}(2,0,0)=(l+1,\beta,l-1). $$ Thus, $\beta=2\alpha$ and $l-1=2m$, therefore we also have
$$\widetilde{\gamma}(0,0,2)=(2m,2\alpha,2m+2).$$
And our claim follows by computing the square of these elements. Notice moreover that $\alpha$ and $2m+1$ are coprime as $(1,0,1)$ is indivisible.
\end{oss} 
We first show that $Mon^2(K_v(A))_{lt}^{pr}$ contains all isometries preserving 
the orientation of the positive cone of $H^{2}(K_v(A),\mathbb{Z})$ ( see Remark \ref{positive cone}) and having determinant $1$.
\begin{prop}\label{prop:so_sing} Let $A$, $v$, $H$ and $h$ be as in Setting \ref{set}. Then, 
using the identification \eqref{CohMuk},
$$SO^{+}(w^{\perp})=SO^{+}(H^{2}(K_v(A,H),\ZZ))\subseteq Mon^2(K_v(A,H))_{lt}^{pr}.$$
\end{prop}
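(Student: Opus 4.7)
The plan is to reduce the proposition to the concrete setup of Proposition \ref{prop:yoshicambia}, extract from Corollary \ref{casoExE} an explicit Eichler transvection moving $u:=(1,0,1)$, and then conclude via the lattice-theoretic machinery of Section \ref{sec:prelim}.

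\emph{Reduction.} First I would reduce to the case $w=(1,0,-1)$ (so $v=(2,0,-2)$), with $A=E_{1}\times E_{2}$ a product of two very general elliptic curves and $h=e+kf$ for $k\gg 0$ as in Proposition \ref{prop:yoshicambia}. The reduction proceeds along the same lines as in the proofs of Propositions \ref{daabasing} and \ref{daabasing+}: connected deformation of the polarized abelian surface produces, via the Mukai--Donaldson--Le Potier morphism in families, a parallel-transport operator conjugating $Mon^{2}_{lt}$-groups; since $SO^{+}(w^{\perp})$ is a normal subgroup of $O^{+}(w^{\perp})$, the inclusion to be proven is invariant under such conjugation. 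To connect different primitive Mukai vectors of square $2$, one appeals to Fourier--Mukai equivalences as in Propositions \ref{prop:yoshicambia} and \ref{lem:yoshiduale}, combined with Eichler's Lemma \ref{lem:eichler} in the Mukai lattice $H^{ev}(A,\ZZ)\cong U^{4}$.

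\emph{Extracting an Eichler transvection that moves $u$.} In the reduced setup one has $w^{\perp}=H^{2}(A,\ZZ)\oplus\ZZ u\cong U^{3}\oplus\langle -2\rangle$ and the stabilizer $SO^{+}_{u}(w^{\perp})$ coincides with $SO^{+}(H^{2}(A,\ZZ))$. Corollary \ref{casoExE} furnishes the subgroup
\begin{equation*}
G:=\langle SO^{+}(H^{2}(A,\ZZ)),\;\varphi^{-1}\circ SO^{+}_{e+f}(H^{2}(A,\ZZ))\circ\varphi\rangle\;\subseteq\;Mon^{2}(K_{v}(A,H))^{pr}_{lt},
\end{equation*}
and the claim is equivalent to $G=SO^{+}(w^{\perp})$. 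A direct computation with formula (\ref{varphiespl}) gives $\varphi(u)=(0,e-f,1)$, while $\varphi^{-1}$ fixes $e_{2}$ and sends $e-f$ to $u-f$. Taking the Eichler transvection $g:=t(e_{2},e-f)\in SO^{+}_{e+f}(H^{2}(A,\ZZ))$ and transporting through $\varphi$, one obtains
\begin{equation*}
\varphi^{-1}\circ t(e_{2},e-f)\circ\varphi=t(e_{2},u-f)=t(e_{2},u)\circ t(e_{2},-f)
\end{equation*}
by the additivity relation (\ref{eq:eichler_add2}). Since $t(e_{2},-f)\in SO^{+}(H^{2}(A,\ZZ))\subseteq G$, this shows $t(e_{2},u)\in G$.

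\emph{Generation and main obstacle.} From a single transvection $t(e_{2},u)$ moving $u$ the conclusion follows by a standard lattice-theoretic argument. Conjugation by $s\in SO^{+}_{u}(w^{\perp})$ together with (\ref{eq:eichler_add3}) give $t(s(e_{2}),u)\in G$; Eichler's Lemma \ref{lem:eichler} applied to $H^{2}(A,\ZZ)=U^{3}$ shows that $s(e_{2})$ ranges over all primitive isotropic vectors of $H^{2}(A,\ZZ)$, so $t(v,u)\in G$ for every such $v$. Iterating Lemma \ref{lem:supereichler} on the decomposition $w^{\perp}=U_{1}\oplus U_{2}\oplus U_{3}\oplus\ZZ u$ and using Lemma \ref{lem:minieichler} for the hyperbolic planes, one expresses $SO^{+}(w^{\perp})$ as the group generated by Eichler transvections of the form $t(e_{i},a)$, $t(f_{i},a)$ with $a$ either in $H^{2}(A,\ZZ)$ (already in $SO^{+}(H^{2}(A,\ZZ))\subseteq G$) or involving $u$ (obtained from $t(e_{2},u)$ by conjugation and the additive relations \eqref{eq:eichler_add1}--\eqref{eq:eichler_add3}), whence $G=SO^{+}(w^{\perp})$. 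The main obstacle is the explicit calculation identifying the Yoshioka-induced monodromy element $\varphi^{-1}\circ t(e_{2},e-f)\circ\varphi$ with the Eichler transvection $t(e_{2},u-f)$ of $w^{\perp}$; once this is in hand, both the reduction and the final generation follow the patterns already established in Section \ref{sec:prelim} and in the preceding subsections.
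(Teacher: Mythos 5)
Your proposal is correct in its essentials, but the heart of it is genuinely different from the paper's argument, and it is worth comparing the two. After the same reduction to $X=\widetilde{K}_{(2,0,-2)}$-setting and the same subgroup $G=\langle SO^{+}(H^{2}(A,\ZZ)),\,\varphi^{-1}\circ SO^{+}_{e+f}(H^{2}(A,\ZZ))\circ\varphi\rangle$ supplied by Corollary \ref{casoExE}, the paper does \emph{not} produce new transvections: it fixes $u=(1,0,1)$, shows via Remark \ref{oss:contivettore} that $\gamma(u)=(2m+1,2\alpha,2m+1)$ for any $\gamma\in SO^{+}(w^{\perp})$, and then uses Eichler's criterion (in the two cases $\alpha$ primitive or not) to build $\delta\in G$ with $\delta(u)=\gamma(u)$, concluding by the stabilizer argument that $\delta^{-1}\gamma\in SO^{+}(H^{2}(A,\ZZ))\subseteq G$. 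You instead extract from the Yoshioka conjugate a single explicit Eichler transvection: your computation $\varphi^{-1}(e_{2})=e_{2}$, $\varphi^{-1}(e-f)=(1,-f,1)=u-f$ is correct (note the conjugation identity you need is $\varphi^{-1}\circ t(x,y)\circ\varphi=t(\varphi^{-1}x,\varphi^{-1}y)$, not formula \eqref{eq:eichler_add3} as literally printed), so $t(e_{2},u-f)\in G$ and hence $t(e_{2},u)\in G$; conjugating by the stabilizer $SO^{+}_{u}(w^{\perp})=SO^{+}(H^{2}(A,\ZZ))$ then gives $t(x,u)\in G$ for every primitive isotropic $x$, and generation by transvections finishes. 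Your route buys a more structural statement (all Eichler transvections of $w^{\perp}$ lie in $G$, from which $SO^{+}(w^{\perp})\subseteq G$ is immediate), while the paper's orbit-plus-stabilizer argument avoids any appeal to a generation theorem.

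Two points need tightening. First, the final generation step as you justify it is slightly off: iterating Lemma \ref{lem:supereichler} on $w^{\perp}=U_{1}\oplus U_{2}\oplus U_{3}\oplus\ZZ u$ yields generators of $O^{+}(w^{\perp})$, and the last layer contributes $O^{+}(\ZZ u)=\{\pm\operatorname{id}\}$, i.e.\ the determinant $-1$ element $u\mapsto -u$, which is not in $G$. This is repaired either by quoting Eichler's theorem that for a lattice containing $U\oplus U$ the transvection subgroup is $S\widetilde{O}^{+}$ (which here equals $SO^{+}(w^{\perp})$, the discriminant group being $\ZZ/2$), or by the routine observation that a word representing an element of $SO^{+}$ contains an even number of copies of $u\mapsto -u$, and conjugation by this reflection preserves your set of transvections in $G$, so the copies can be cancelled. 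Second, for the reduction to the special case, the Fourier--Mukai route you sketch to connect arbitrary Mukai vectors $v=2w$ with $w^{2}=2$ would essentially re-derive the deformation equivalence of all the $K_{v}(A,H)$; the efficient argument, which is what the paper does, is to invoke directly that any two such $K_{v}(A,H)$ are fibres of a projective, analytically locally trivial family over a connected base (\cite[Theorem 1.6, Proposition 2.16]{pr_crelle}, \cite[Theorem 1.17, Remark 1.18]{PerRap}), which conjugates both $SO^{+}$ and the locally trivial projective monodromy groups.
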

\begin{proof} By \cite[Theorem 1.6, Proposition 2.16]{pr_crelle} or \cite[Theorem 1.17, Remark 1.18]{PerRap}
every two varieties of the form $K_v(A,H)$ appear as fibres of a projective family, over a connected base, 
which is an analytically 
locally trivial deformation at every point of the domain.

Hence it suffices to prove the statement in the special case where
$A=E_1\times E_2$, the Neron Severi group $NS(A)$ is generated by the classes $e$ and $f$ of the curves $E_1$ and $E_2$, 
the Mukai vector $v$ is $(2,0,-2)$ and the polarization is $e+kf$ for a big $k$ (as in Proposition \ref{prop:yoshicambia}).

Let $G\subseteq O^{+}((1,0,-1)^{\perp})$ be the subgroup generated by
$SO^{+}(H^{2}(E_1\times E_{2},\ZZ))$ and $\phi^{-1}\circ SO^{+}_{e+f}(H^{2}(E_1\times E_{2},\ZZ))\circ \phi$.
By Corollary \ref{casoExE}, the statement follows from the inclusion
$$SO^{+}(w^{\perp})\subseteq G.$$ 

We are going to show that for every 
$\gamma\in SO^{+}((1,0,-1)^{\perp})$ there exists $\delta$ belonging to $G$ such that
$\gamma((1,0,1))=\delta((1,0,1))$. 

If this is the case, letting $\widetilde{\gamma}$ and $\widetilde{\delta}$ be the extensions of 
$\gamma$ and $\delta$ as in Remark \ref{oss:contivettore}, the composition 
$\widetilde{\delta}^{-1}\circ\widetilde{\gamma}\in O(H^{ev}(E_1\times E_{2},\ZZ))$ has determinant $1$ and is the identity on 
$H^{0}(E_1\times E_{2},\ZZ)\oplus H^{4}(E_1\times E_{2},\ZZ)$: this implies that $\widetilde{\delta}^{-1}\circ\widetilde{\gamma}$ acts on $H^{2}(E_1\times E_{2},\ZZ)$ with determinant $1$. Moreover $\delta^{-1}\circ\gamma$ preserves the orientation of the positive cone of the lattice $(1,0,-1)^{\perp}$, hence the same holds for its restriction to  $H^{2}(E_1\times E_{2},\ZZ)$.  It follows that $\delta^{-1}\circ\gamma\in SO^{+}(H^{2}(E_1\times E_{2},\ZZ))$
and $\gamma\in G$ since $\delta\in G$.
 
Given $\gamma\in SO^{+}((1,0,-1)^{\perp})$, in order to find $\delta$ we notice, from the definition of $\varphi$ in equation (\ref{varphiespl}), that 
\begin{equation}\label{x}\varphi(l,\chi+bf,l)= (0,l(e-f)+\chi,l+b)\end{equation} for every $l,b\in\ZZ$ and $\chi\in H^{2}(E_1\times E_{2},\ZZ)$ perpendicular to both $e$ and $f$. 

By Remark \ref{oss:contivettore}, we know $\gamma((1,0,1))=(2m+1,2\alpha,2m+1)$. 

Let us first suppose that $\alpha$ is primitive.
By equation (\ref{x}) we have $$\varphi((1,0,1))=(0,e-f,1)$$ and, by Eichler's criterion \ref{lem:eichler} and Remark \ref{est2},
there exists an isometry 
$g$ belonging to $SO^{+}(H^{2}(E_1\times E_{2},\ZZ))$ such that $g(e+f)=e+f$ and $g(e-f)=(2m+1)(f-e)+2\rho$, with $\rho$ primitive and orthogonal to $e$ and $f$.

We have $(g\circ\varphi) ((1,0,1))=g(0,f-e,1)=(0,(2m+1)(e-f)+2\rho,1)$ and, by equation (\ref{x}),
$$\varphi^{-1} \circ g \circ \varphi((1,0,1))= (2m+1,2(mf+\rho),2m+1). $$
Notice that $\rho':=\rho+mf$  is primitive as $\rho$ is and $f$ is orthogonal to it. We again apply Eichler's criterion \ref{lem:eichler} to find an isometry $g'\in SO^{+}(H^{2}(E_1\times E_{2},\ZZ))$ such that $g'(\rho')=\alpha$.
It follows that  $\delta:=g'\circ \varphi^{-1} \circ g \circ \varphi\in G$ satisfies $\delta((1,0,1))=\gamma((1,0,1))$.

Let us now suppose that $\alpha=n\beta$, with $n \neq \pm 1$. By Eichler's criterion there exists an element 
$g''\in SO^{+}(H^{2}(E_1\times E_{2},\ZZ))\subseteq G$ such that $g''(\beta)=\beta '\in\langle e,f\rangle^\perp$. 
Therefore $\varphi (2m+1,2n\beta ',2m+1)=
(0,(2m+1)(e-f)+2n\beta ',2m+1)$. As $\beta '$ is orthogonal to $f-e$, by applying Eichler's criterion again, there exists an element 
$g'''\in SO^{+}_{e+f}(H^{2}(E_1\times E_{2},\ZZ))$ fixing the element $e+f$ and  such that 
$g'''((2m+1)(e-f)+2n\beta')=(2m+1)(e-f)+2\beta''$, with $\beta''$ primitive and perpendicular to $e$ and $f$. Let now 
$h:=\varphi^{-1}\circ g'''\circ \varphi \circ g''\in G$. We have 
$$(h\circ\gamma)(1,0,1) =h ((2m+1,2n\beta',2m+1))= (2m+1,2\beta'',2m+1) $$
and, by  the first part of the proof, the equality  $\delta((1,0,1))=(2m+1,2\beta',2m+1)$ holds for some  $\delta\in G$.
We conclude that $\gamma ((1,0,-1))=(h^{-1}\circ \delta) ((1,0,-1))$
and since $h^{-1}\circ \delta \in G$, this finishes the proof.
\end{proof}

\begin{oss}\label{quad2}
The argument of the above proof could be applied to construct monodromy operators also for generalized Kummer manifolds $K_{n}(A)$ of dimension $2n$ for $n>1$. In this case $$SO^{+}((1,0,-1-n)^\perp)\not \subseteq Mon^{2}(K_{n}(A)),$$ and the above arguments only show that 
$Mon^{2}(K_{n}(A))$ contains 
the  subgroup   
$S\widetilde{O}^+((1,0,-1-n)^\perp)$ consisting of isometries that  can be extended to the whole Mukai lattice with trivial action $(1,0,-1-n)$.
This difference has a pure lattice theoretic explanation: every isometry of $(1,0,-1-n)^\perp$ extends to an isometry of the Mukai lattice fixing $(1,0,-1-n)$ if and only if $n=0$.
Using the argument of the proof of the previous Proposition and the analogue of  
Corollary \ref{det-1} based on \cite[Proposition 3.2]{yoshi_moduli}
one can prove that  $Mon^{2}(K_{n}(A))$ contains an extension of 
$S\widetilde{O}^+((1,0,-1-n)^\perp)$ of index two.
However,  to prove that this extension is  the monodromy group of generalized Kummer manifold,  one would still need some argument to show that a monodromy operator  extends to an isometry of the Mukai lattice (see \cite{mark_mon,mon_mon,kier_mon}).  
\end{oss}

Combining Proposition \ref{prop:so_sing} and Corollary \ref{det-1} we can finally prove that 
for every $A,v=2w,H$ and $K_v(A)$ as in Setting \ref{set},
\begin{equation}\label{MSING}Mon^2(K_v(A))_{lt}^{pr}=O^{+}(H^{2}(K_v(A),\ZZ))=O^{+}(w^{\perp}).\end{equation}
Since $SO^{+}(w^{\perp})$ has index two in $O^{+}(w^{\perp})$
and  
$SO^{+}(w^{\perp})\subseteq Mon^2(K_v(A))_{lt}^{pr}$ by Proposition \ref{prop:so_sing},
it suffices to find a monodromy operator in $Mon^2(K_v(A))_{lt}^{pr}$ with determinant $-1$. 
As in the proof of Proposition \ref{prop:so_sing}, by \cite[Theorem 1.6, Proposition 2.16]{pr_crelle} or \cite[Theorem 1.17,  Remark 1.18]{PerRap}, it is enough to find this element in a particular case.
This has been done in Corollary \ref{det-1}.
\qed

\section{Total Monodromy and the Classical Bimeromorphic Global Torelli}\label{sec:monotutta}
In this section we prove that the monodromy group of a \hk manifold of $OG6$ type is maximal
and that Classical Bimeromorphic Global Torelli Theorem holds for this class of manifolds. 

In order to show these results we will only use monodromy operators coming from the singular models 
studied in the previous subsection and one more monodromy operator induced by a specific prime exceptional divisor or a divisorial contraction. 

Let $A,v=2w,h$ and $K_{v}(A,H)$ be as in Setting \ref{set} and let 
$\pi_{v}:\widetilde{K}_{v}(A,H)\rightarrow K_{v}(A,H)$
be the blow up of $K_v(A,H)$ along its singular locus with reduced structure.
By \cite[Theorem 1.6]{pr_crelle}, the variety  $\widetilde{K}_{v}(A,H)$ is a \hk manifold of $OG6$ type.
Moreover, by \cite[Theorem 2.4, Remark 3.3]{perapimrn},  
$$\pi_{v}^{*}:H^{2}(K_{v}(A,H),\ZZ)\rightarrow H^{2}(\widetilde{K}_{v}(A,H),\ZZ)$$
is a Hodge isometric embedding and 
$$H^{2}(\widetilde{K}_{v}(A,H),\ZZ)=\pi_{v}^{*}(H^{2}(K_{v}(A,H),\ZZ))\oplus_{\perp}\ZZ \epsilon_v$$
where $\epsilon_v$ is a  class such that $\epsilon_v^2=-2$ and $2\epsilon_v$ equals the class  of the (irreducible) exceptional divisor.
Using the Mukai-Donaldson-Le Potier  isomorphism $w^{\perp}\simeq H^{2}(K_{v}(A,H),\ZZ)$,
we have  a Hodge isometric  isomorphism (\cite[Theroem 3.1]{perapimrn})
\begin{equation} \label{Hiso}
H^{2}(\widetilde{K}_{v}(A,H),\ZZ)\simeq w^{\perp}\oplus_{\perp}\ZZ \epsilon_v
\end{equation}
that allows the identification 
\begin{equation}\label{CohMuk2} O^+(H^{2}(\widetilde{K}_{v}(A,H),\ZZ))=O^{+}(w^{\perp}\oplus_{\perp}\ZZ \epsilon_v).\end{equation}
Hence 
we  regard the group $Mon^{2}(\widetilde{K}_{v}(A,H))$ 
as a subgroup of $O^{+}(w^{\perp}\oplus_{\perp}\ZZ \epsilon_v)$.

The following proposition describes the contribution of $Mon^{2}(\widetilde{K}_{v}(A,H))^{pr}_{lt}$ to $Mon^{2}(\widetilde{K}_{v}(A,H))$.
\begin{prop}\label{dasingahk} Using the identification \eqref{CohMuk2},
$$O^{+}(w^{\perp})\subseteq Mon^{2}(\widetilde{K}_{v}(A,H))\subseteq O^{+}(w^{\perp}\oplus_{\perp}\ZZ \epsilon_v).$$
\end{prop}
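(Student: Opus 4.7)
The second inclusion is immediate from Lemma \ref{o+}(1), so the plan is concentrated on the inclusion $O^+(w^\perp)\subseteq Mon^2(\widetilde{K}_v(A,H))$, where the left-hand side is viewed via the identification \eqref{CohMuk2} as the subgroup of isometries acting as the identity on $\mathbb{Z}\epsilon_v$.

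The strategy is to lift monodromy from the singular model $K_v(A,H)$ to the resolution $\widetilde{K}_v(A,H)$ by blowing up the singular locus in families. Concretely, fix $g\in O^+(w^\perp)=O^+(H^2(K_v(A,H),\mathbb{Z}))$. By Proposition \ref{propsection}, $g\in Mon^2(K_v(A,H))^{pr}_{lt}$, so there exists a proper analytically locally trivial family $f\colon \mathcal{K}\rightarrow T$ with projective fibres, a base point $0\in T$ with $f^{-1}(0)=K_v(A,H)$, and a loop $\gamma$ at $0$ such that $g=t^2_{\gamma,f}$. Since the deformation is analytically locally trivial at every point, the relative singular locus $\Sigma\subset\mathcal{K}$ is a closed analytic subset which, locally on $\mathcal{K}$, is a product and so remains flat over $T$. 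Blowing up $\mathcal{K}$ along $\Sigma$ with its reduced structure produces a proper morphism $\widetilde{f}\colon \widetilde{\mathcal{K}}\rightarrow T$ whose fiber over $0$ is $\widetilde{K}_v(A,H)$ and whose fiber over each $t\in T$ is the analogous blow-up of the singular symplectic fiber; by \cite[Theorem 1.6]{pr_crelle} applied fiberwise (and the local triviality of $f$), these are \hk manifolds of $OG6$ type, and $\widetilde{f}$ is itself locally trivial, hence a smooth proper Kähler family.

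Now consider the parallel transport $\widetilde{g}:=t^2_{\gamma,\widetilde{f}}\in Mon^2(\widetilde{K}_v(A,H))$. Since the blow-up is canonical, the pullback $\pi_v^*$ is compatible with parallel transport, so $\widetilde{g}$ preserves the sublattice $\pi_v^*(H^2(K_v(A,H),\mathbb{Z}))$ and restricts on it to $g$. The exceptional divisor of $\pi_v$ is irreducible, so its class $2\epsilon_v$ varies continuously in a constant cohomology class under the family $\widetilde{f}$; therefore $\widetilde{g}(\epsilon_v)=\pm\epsilon_v$, and since $\widetilde{g}$ sends the class of an effective divisor to the class of an effective divisor, $\widetilde{g}(\epsilon_v)=\epsilon_v$. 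Under the orthogonal decomposition \eqref{Hiso}, $\widetilde{g}$ is therefore exactly the element $(g,\operatorname{id})\in O^+(w^\perp\oplus_\perp\ZZ\epsilon_v)$ identified with $g\in O^+(w^\perp)$. This proves the inclusion.

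The main issue to check carefully is the fibered blow-up construction in step two: one needs that the relative singular locus is a smooth, or at least nice enough, subvariety so that the blow-up is a locally trivial family of \hk manifolds. This is guaranteed by the local triviality of $f$ together with the explicit local description of the singularities of $K_v(A,H)$ and the fact that $\pi_v$ is the unique symplectic resolution, so in a local trivialization of $f$ the blow-up is the product of a trivialization with the blow-up of a single fiber.
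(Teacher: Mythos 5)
Your proof is correct and follows essentially the same route as the paper's, which compresses your fibred blow-up construction into the phrase ``functoriality of the blow up and injectivity of $\pi_v^*$'' and likewise uses irreducibility of the exceptional divisor to conclude that $\epsilon_v$ is fixed. One small remark: your appeal to ``monodromy sends effective classes to effective classes'' is not valid in general and is also unnecessary, since the class of the relative exceptional divisor of the blown-up family is a flat section of $R^2\widetilde{f}_*\ZZ$, so $\widetilde{g}(2\epsilon_v)=2\epsilon_v$ directly, with no sign ambiguity to resolve.
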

\begin{proof} Proposition \ref{propsection} gives the identification $O^{+}(w^{\perp})= Mon^{2}(K_{v}(A,H))^{pr}_{lt}$.
By functoriality of the blow up and injectivity of $\pi_{v}^{*}$ we have 
 $Mon^{2}(K_{v}(A,H))^{pr}_{lt}\subseteq Mon^{2}(\widetilde{K}_{v}(A,H))$ and, since 
 the exceptional divisor of $\pi_v$ is irreducible, the monodromy group $Mon^{2}(K_{v}(A,H))^{pr}_{lt}$ acts trivially on $\epsilon_v$
as desired.
\end{proof}

Monodromy operators in $Mon^{2}(\widetilde{K}_{v}(A,H))$ of a different nature can be obtained by considering prime exceptional divisors on particular projective \hk manifolds of $OG6$ type.
\begin{defn}\label{ndef}
A prime exceptional divisor on a \hk manifold $X$ is a reduced and irreducible divisor of negative Beauville-Bogomolov square.
\end{defn}
By \cite[Prop. 6.2]{mark_tor} for every  prime exceptional divisor $D$ on $M$
and every class $\alpha\in H^{2}(M,\ZZ)$, the value $2\frac{([D],\alpha)}{([D]^{2})}$ is integral
and the associated reflection $$R_D:H^{2}(M,\ZZ)\rightarrow H^{2}(M,\ZZ)$$ defined by the formula
$$R_D(\alpha)=\alpha-2\frac{([D],\alpha)}{[D]^2}[D] $$
is a monodromy operator in $Mon^{2}(M)$.

The projective model of $OG6$ that we need is the original O'Grady example and 
the modular image of the  corresponding prime exceptional divisor parametrizes non locally free sheaves.

\begin{prop}\label{prop:nagai}
Let $A$ be the Jacobian of a generic curve of genus $2$ and let $v=(2,0,-2)\in H^{ev}(A,\ZZ)$. 
Let $\widetilde{B}\subset \widetilde{K}_{(2,0,-2)}(A,H)$ be the strict transform in $\widetilde{K}_{(2,0,-2)}(A,H)$
of the locus of non locally free sheaves in $K_{(2,0,-2)}(A,H)$.
\begin{enumerate}
\item{The subvariety $\widetilde{B}\subset \widetilde{K}_{(2,0,-2)}(A,H)$ is a prime exceptional divisor and the Beauville-Bogomolov square of the associated class is $-4$.}
\item{The reflection $R_{\widetilde{B}}:H^{2}(\widetilde{K}_{(2,0,-2)}(A,H),\ZZ)\rightarrow \widetilde{K}_{(2,0,-2)}(A,H),\ZZ)$ belongs to the monodromy group  $Mon^{2}(\widetilde{K}_{(2,0,-2)}(A,H))$.}
\end{enumerate}
\end{prop}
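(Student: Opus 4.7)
The plan is to deduce (2) from (1) via Markman's \cite[Prop. 6.2]{mark_tor}: if (1) holds, then $[\widetilde{B}]$ is a prime exceptional divisor class, and the reflection $R_{\widetilde{B}}$ is automatically a monodromy operator. All the work therefore lies in (1).

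For primality, I would first argue that the non-locally-free locus $B\subset K_{(2,0,-2)}(A,H)$ is irreducible of codimension one. A stable non-locally-free sheaf $F$ with Mukai vector $(2,0,-2)$ fits in an exact sequence $0\to F\to F^{\vee\vee}\to Q\to 0$ with $F^{\vee\vee}$ locally free and $Q$ of finite length; the generic such $F$ has $Q\cong\mathbb{C}_{p}$ at a single point $p\in A$ and $F^{\vee\vee}$ stable with primitive Mukai vector $(2,0,-1)$. Since $\Ext^{1}(\mathbb{C}_{p},F^{\vee\vee})\cong (F^{\vee\vee}|_{p})^{\vee}$ has dimension two, $B$ is dominated by a $\mathbb{P}^{1}$-bundle over an irreducible parameter space (a moduli space of locally free bundles with primitive Mukai vector $(2,0,-1)$, paired with $A$ and cut down to an Albanese fibre). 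Thus $B$ is an irreducible divisor; since $B$ is not contained in the singular locus $\Sigma$ of $K_{v}(A,H)$, the strict transform $\widetilde{B}$ is also reduced and irreducible.

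Next I would compute the square of $[\widetilde{B}]$ in the orthogonal decomposition $H^{2}(\widetilde{K}_{v}(A,H),\ZZ)=w^{\perp}\oplus\ZZ\epsilon_{v}$ provided by \eqref{Hiso}, with $w=(1,0,-1)$. Writing $[\widetilde{B}]=\nu_{v}(\xi)+k\epsilon_{v}$, I expect $\xi=(1,0,1)$: this element of the Mukai lattice is orthogonal to $w$, has Mukai square $-2$, and represents the class of the non-locally-free divisor on the singular model, as one sees by a Chern-class calculation of the Mukai-Donaldson-Le Potier image of $(1,0,1)$ using a quasi-universal family on $A\times K^{s}_{v}(A,H)$ and Grothendieck-Riemann-Roch. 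Since $(1,0,1)^{2}=-2$ and $\epsilon_{v}^{2}=-2$, the Beauville-Bogomolov square becomes
$$[\widetilde{B}]^{2}=(1,0,1)^{2}+k^{2}\,\epsilon_{v}^{2}=-2-2k^{2},$$
so $[\widetilde{B}]^{2}=-4$ precisely when $|k|=1$.

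The main obstacle is the determination of the multiplicity $|k|$. This requires a local deformation-theoretic analysis of the blow-up $\pi_{v}\colon\widetilde{K}_{v}(A,H)\to K_{v}(A,H)$ at a generic polystable point $F_{1}\oplus F_{2}\in\Sigma$: one must identify a transverse slice to $\Sigma$ (a symplectic $A_{1}$-singularity after crepant resolution in a suitable direction) and verify that the divisor $B$ meets such a slice with multiplicity one along a direction transverse to the exceptional $\mathbb{P}^{1}$. Once this local calculation yields $|k|=1$, we conclude $[\widetilde{B}]^{2}=-4$; together with the primality and negativity of the square this establishes (1), and then (2) is an immediate consequence of \cite[Prop. 6.2]{mark_tor}.
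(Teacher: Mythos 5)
Your reduction of (2) to (1) via \cite[Prop. 6.2]{mark_tor} is exactly what the paper does, and your overall plan for (1) (irreducibility of $B$, then a class computation in the decomposition (\ref{Hiso})) is the right shape; but there is a genuine gap at the core of (1): you never actually compute $[\widetilde{B}]$. You write $[\widetilde{B}]=\nu_{v}(\xi)+k\epsilon_{v}$, \emph{assert} $\xi=(1,0,1)$ (``I expect''), and then explicitly leave the determination of $k$ as ``the main obstacle,'' only sketching what ``one must'' do. That determination is precisely the substantive content of the statement: if the $w^{\perp}$-component were an imprimitive multiple of $(1,0,1)$, or if $|k|\neq 1$, the square would not be $-4$ (and the divisibility $2$ of $[\widetilde{B}]$, which the paper needs later in Lemma \ref{lem:pex} and Theorem \ref{thm:monodromy}, would also fail). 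Note that $k$ is the multiplicity of $B$ along the singular locus $\Sigma$ (equivalently an intersection number with the exceptional locus of $\pi_{v}$), and computing it, together with the identification of the $w^{\perp}$-component, is a genuinely nontrivial calculation: it is exactly what \cite[Theorem 3.5.1]{rap_phdz} and \cite[Theorem 9.1]{per} establish, and the paper proves (1) by citing those results (irreducibility is quoted from \cite[Lemma 4.3.3]{OG6}). As written, your argument only yields $[\widetilde{B}]^{2}=-2a^{2}-2k^{2}$ for undetermined integers $a,k$, conditional on the unproved claim about $\xi$.

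Two smaller points. Your irreducibility sketch follows the lines of the cited \cite[Lemma 4.3.3]{OG6}, but it only treats the open stratum where the torsion quotient $Q$ has length one and $F^{\vee\vee}$ is stable; to conclude that $B$ (hence $\widetilde{B}$) is irreducible you must also show that the remaining strata (longer $Q$, unstable or strictly semistable double duals, strictly semistable $F$) are contained in the closure of that stratum or have smaller dimension. Finally, the two-dimensional space parametrizing the kernels is $\Hom(F^{\vee\vee},\mathbb{C}_{p})\cong(F^{\vee\vee}|_{p})^{\vee}$, not $\Ext^{1}(\mathbb{C}_{p},F^{\vee\vee})$, which vanishes for $F^{\vee\vee}$ locally free; the $\mathbb{P}^{1}$ conclusion survives, but the justification as stated is wrong.
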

\begin{proof}
Irreducibility of $\widetilde{B}$ follows from \cite[Lemma 4.3.3]{OG6} and $[\widetilde{B}]^{2}=-4$ is proven in \cite[Theorem 3.5.1]{rap_phdz} or in \cite[Theorem 9.1]{per}: this proves (1).
(2) follows from (1) and  \cite[Prop. 6.2]{mark_tor}.
\end{proof}

\begin{oss} Nagai \cite[Main Theorem]{nagai} proved furthermore that there exists a divisorial contraction $\widetilde{K}_{(2,0,-2)}(A,H)$ whose contracted locus is $\widetilde{B}$. 
\end{oss}
 Our main result follows from Proposition \ref{dasingahk} and Proposition \ref{prop:nagai}
\begin{thm}\label{thm:monodromy}
\begin{enumerate}
\item{The monodromy group of a \hk manifold $X$ of $OG6$ type is maximal, i.e.
$$Mon^{2}(Y)=O^{+}(H^{2}(Y,\ZZ)).$$}
\item{The Classical Bimeromorphic Global Torelli Theorem holds for \hk manifolds of $OG6$ type, i.e.
two \hk manifolds $X'$ and $X''$ of  $OG6$ type are bimeromorphic if and only if there exists an isometric  isomorphism of Hodge structures
between $H^{2}(X',\ZZ)$ and $H^{2}(X'',\ZZ)$.}
\end{enumerate}
\end{thm}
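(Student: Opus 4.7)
Plan. The strategy has two phases: first establish part (1) by combining the input of Proposition \ref{dasingahk} with the reflection of Proposition \ref{prop:nagai} on a single well-chosen manifold, and then deduce part (2) as a formal consequence of Markman's Hodge-theoretic Global Torelli.

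For (1), by the deformation invariance of $Mon^2$ it suffices to compute the monodromy group on one \hk manifold of $OG6$ type. Take $X = \widetilde{K}_{(2,0,-2)}(A,H)$ with $A = \Jac(C)$ the Jacobian of a generic genus-two curve, so that Proposition \ref{prop:nagai} applies. Using the decomposition
$$L := H^2(X,\ZZ) \cong w^\perp \oplus_\perp \ZZ \epsilon_v$$
from (\ref{Hiso}), Proposition \ref{dasingahk} embeds $O^+(w^\perp)$ into $Mon^2(X)$ as the subgroup fixing $\epsilon_v$, and Proposition \ref{prop:nagai} adds the reflection $R_{\widetilde{B}}$ through a class of Beauville-Bogomolov square $-4$ and divisibility $2$. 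Write $G \subseteq Mon^2(X) \subseteq O^+(L)$ for the subgroup generated by these two pieces.

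The heart of the argument is to show $G = O^+(L)$. Since $w^\perp$ already contains $U^3$, both $L$ and $w^\perp$ contain $U^2$, so Eichler's criterion (Lemma \ref{lem:eichler}) is available in full strength. The stabiliser in $O^+(L)$ of the negative class $\epsilon_v$ is precisely $O^+(\epsilon_v^\perp) = O^+(w^\perp)$, because $\ZZ\epsilon_v$ is negative-definite of rank one and so does not affect the positive-cone orientation. Hence it is enough to show that $G$ acts transitively on the $O^+(L)$-orbit of $\epsilon_v$, that is, on the set of primitive $u \in L$ with $u^2 = -2$ and $[u/\mathrm{div}(u)] = [\epsilon_v/2]$ in $A_L \cong (\ZZ/2)^2$. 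The plan is to decompose $[\widetilde{B}]$ explicitly in the basis $w^\perp \oplus_\perp \ZZ\epsilon_v$, check that $R_{\widetilde{B}}(\epsilon_v)$ has a nonzero $w^\perp$-component, and then, by conjugating $R_{\widetilde{B}}$ by elements of $O^+(w^\perp) \subseteq G$, generate reflections through a whole $O^+(w^\perp)$-orbit of classes of square $-4$ and divisibility $2$. Iterating these reflections interspersed with $O^+(w^\perp)$-moves one builds, for any target $u$ in the orbit of $\epsilon_v$, an element $g \in G$ with $g(\epsilon_v) = u$. Then for an arbitrary $\varphi \in O^+(L)$, choosing $g$ with $g(\epsilon_v) = \varphi(\epsilon_v)$ gives $g^{-1}\varphi \in \mathrm{Stab}(\epsilon_v) = O^+(w^\perp) \subseteq G$, hence $\varphi \in G$.

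Granted (1), part (2) follows from Markman's Hodge-theoretic Global Torelli \cite[Thm.\ 1.3]{mark_tor}. Given a Hodge isometry $\varphi : H^2(X,\ZZ) \to H^2(Y,\ZZ)$ between two \hk manifolds of $OG6$ type, at least one of $\pm\varphi$ preserves the orientation of the positive cone and therefore lies in $O^+$; composing with any parallel-transport identification $H^2(Y,\ZZ) \cong H^2(X,\ZZ)$ and invoking (1) exhibits $\pm\varphi$ as a parallel-transport operator, so Markman's theorem produces a bimeromorphism $X \dashrightarrow Y$. The converse, that a bimeromorphism of \hk manifolds induces a Hodge isometry on $H^2$, is standard. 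The main obstacle throughout is the explicit lattice computation that $R_{\widetilde{B}}$ together with $O^+(w^\perp)$ suffices to exhaust the Eichler orbit of $\epsilon_v$; everything else is either a formal deformation-invariance argument or a direct citation of Markman.
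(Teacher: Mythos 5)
Your plan for part (2) and the overall architecture of part (1) coincide with the paper's: reduce to the original O'Grady example $\widetilde{K}_{(2,0,-2)}(A,H)$ with $A$ a generic Jacobian, feed in $O^{+}(w^{\perp})\subseteq Mon^{2}$ from Proposition \ref{dasingahk} and the reflection $R_{\widetilde{B}}$ from Proposition \ref{prop:nagai}, prove that these generate $O^{+}(H^{2}(X,\ZZ))$, and then deduce the bimeromorphic Torelli statement from Markman's theorem by the standard $\pm\varphi$ trick; the observation that $\mathrm{Stab}_{O^{+}(L)}(\epsilon_{v})=O^{+}(w^{\perp})$ is also correct, since $L=w^{\perp}\oplus_{\perp}\ZZ\epsilon_{v}$ is an orthogonal direct sum and $\ZZ\epsilon_{v}$ is negative definite. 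However, the generation statement, which you yourself identify as the heart of the matter, is only announced (``iterating these reflections interspersed with $O^{+}(w^{\perp})$-moves one builds \dots''), not proved. In the paper this is a genuine computation: writing $L=H^{2}(A,\ZZ)\oplus_{\perp}\ZZ\zeta\oplus_{\perp}\ZZ\epsilon$ with $[\widetilde{B}]=\zeta+\epsilon$, one takes an arbitrary $f\in O^{+}(L)$, writes $f(\epsilon)=2u+a\zeta+b\epsilon$ (divisibility $2$), and runs a four-case analysis ($b=0$; $a=0$; $a,b\neq 0$ with $u$ not divisible by $2$; $a,b\neq 0$ with $2\mid u$), each case using Eichler's criterion (Lemma \ref{lem:eichler}) inside $\epsilon^{\perp}$ together with at most two applications of $R_{\zeta+\epsilon}$; the last case needs a separate preliminary normalisation of $u$. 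Without this (or an equivalent argument) the claim that conjugates of a single reflection plus $O^{+}(w^{\perp})$ exhaust the relevant orbit is unsubstantiated.

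There is also a concrete inaccuracy in your reduction. You identify the $O^{+}(L)$-orbit of $\epsilon_{v}$ with the set $S$ of primitive $u$ with $u^{2}=-2$ and $[u/\mathrm{div}(u)]=[\epsilon_{v}/2]$ in $A_{L}$. But $O^{+}(L)$ does not act trivially on $A_{L}\cong(\ZZ/2\ZZ)^{2}$: the two residues $[\zeta/2]$ and $[\epsilon/2]$ have the same value of the discriminant form and are interchanged by isometries in $O^{+}(L)$; indeed $R_{\widetilde{B}}(\epsilon)=-\zeta$ already leaves your set $S$. So the orbit is the full set of primitive classes of square $-2$ and divisibility $2$, and proving transitivity of your group $G$ on $S$ alone would not let you conclude, since for an arbitrary $\varphi\in O^{+}(L)$ the class $\varphi(\epsilon_{v})$ need not lie in $S$. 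The fix is exactly what the paper's case analysis does: prove transitivity of $G$ on all primitive divisibility-$2$ classes of square $-2$, using that $R_{\zeta+\epsilon}$ itself switches the two discriminant residues. In summary, the route is the paper's route, but the decisive lattice-theoretic step must be carried out in full, and the orbit must be identified correctly before the stabiliser argument closes the proof.
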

\begin{proof}
(1) By definition of deformation equivalence type of \hk manifolds it suffices to prove the statement 
for the original O'Grady example $\widetilde{K}_{(2,0,-2)}(A,H)$ where $A$ is the Jacobian of a generic curve of genus $2$.
In this case we have 
\begin{eqnarray}\label{eq:cohom}
H^{2}(\widetilde{K}_{(2,0,-2)}(A,H))=(1,0,-1)^{\perp}\oplus_{\perp} \ZZ \epsilon=\\\nonumber H^2(A,\ZZ)\oplus_{\perp}\ZZ\zeta\oplus_{\perp}\ZZ \epsilon,
\end{eqnarray}
where $\epsilon:=\epsilon_{(2,0,-2)}$ and $\zeta^{2}=\epsilon^2=-2.$
By \cite[Theorem 3.5.1]{rap_phdz} the class $[\widetilde{B}]$ of the prime exceptional divisor is in $\ZZ\zeta\oplus_{\perp}\ZZ \epsilon$
and since $[\widetilde{B}]^2=-4$, up to changing the signs of $\zeta$ and $\epsilon_{(2,0,-2)}$, we may suppose that $[\widetilde{B}]=\zeta+\epsilon$.

By Proposition \ref{dasingahk} we know that $O^{+}((1,0,-1)^{\perp})\subseteq Mon^{2}(\widetilde{K}_{(2,0,-2)}(A,H))$ and by Proposition \ref{prop:nagai} we know that $R_{\widetilde{B}}:=R_{\zeta+\epsilon}\in Mon^{2}(\widetilde{K}_{(2,0,-2)}(A,H))$: hence the statement follows if we prove that the the subgroup generated by $R_{\zeta+\epsilon }$ and $$O^{+}((1,0,-1)^{\perp})=O^{+}(H^2(A,\ZZ)\oplus_{\perp}\ZZ\zeta)$$ is the whole $$O^{+}(H^2(A,\ZZ)\oplus_{\perp}\ZZ\zeta\oplus_{\perp}\ZZ \epsilon).$$

We have to show that every $f\in O^{+}(H^2(A,\ZZ)\oplus_{\perp}\ZZ\zeta\oplus_{\perp}\ZZ \epsilon)$
can be obtained by composing    $R_{\zeta+\epsilon}$ and elements of $O^{+}((1,0,-1)^{\perp})=O^{+}(H^2(A,\ZZ)\oplus_{\perp}\ZZ\zeta)$.

First, notice that $R_{\zeta+\epsilon }$ sends $\zeta$ in $-\epsilon$, $\epsilon$ to $-\zeta$ and leaves their orthogonal invariant.
Since the divisibility of  $\epsilon$ is $2$, its image has to be of the form $$f(\epsilon)=2u+a\zeta+b\epsilon$$ 
for $a,b\in\ZZ$ and $u\in H^2(A,\ZZ)\simeq U^3$. 

We split the proof in four steps:
\begin{itemize}
\item[$b=0$] This means that $2u+a\zeta$ is primitive and has divisibility $2$ inside the lattice $H^2(A,\ZZ)\oplus_{\perp} \ZZ\zeta$. By Eichler's criterion (cf. Lem. \ref{lem:eichler}), 
there is an isometry $g\in O^{+}(H^2(A,\ZZ)\oplus_{\perp} \ZZ\zeta)=O^+((1,0,-1)^{\perp})$ such that 
$g(2u+a\zeta)=-\zeta$, therefore the isometry $h:=R_{\zeta+\epsilon}\circ g\circ f$ sends $\epsilon$ into itself, 
hence $h\in O^{+}(H^2(A,\ZZ)\oplus_{\perp} \ZZ\zeta)$. Since $g^{-1}\circ R_{\zeta+\epsilon}\circ h=f$, our claim holds. 
\item[$a=0$] This case reduces to the first one after composing $f$ with $R_{\zeta+\epsilon}$. 
\item[$a,b\neq 0$ and $2\not| \;u $] Since $(f(\epsilon))^2=\epsilon^2=-2$, either $a$ or $b$ is even and, since we can compose 
with $R_{\zeta+\epsilon}$, we can suppose that $a=2c$. As $u$ is not divisible by $2$, a primitive sub multiple of the element $u+c\zeta$ has divisibility $1$. Therefore, by Eichler's criterion (Cf. Lem. \ref{lem:eichler}), there exists 
$g\in O^{+}(H^2(A,\ZZ)\oplus_{\perp} \ZZ\zeta)$ such that $g(u+c\zeta)=\widetilde{u}$ for an element $\widetilde{u}\in H^2(A,\ZZ)$. Thus, the isometry $gf$ falls in the previous case and our claim holds.
\item[$a,b\neq 0$ and $2| u$] By the same argument of the previous case we may suppose that $b=2c$.
In this case $a$ has to be odd. A primitive submultiple of $2u+a\zeta$ is of the form
$2u'+a'\zeta$ with $a'$ odd.  By Eichler's criterion (cf. Lem. \ref{lem:eichler}), there exists 
$g\in O^{+}(H^2(A,\ZZ)\oplus_{\perp} \ZZ\zeta)$ such that $g(2u'+a'\zeta)=2u''+a'\zeta$
with $u''$ not divisible by $2$. It follows that $gf$ falls in the previous case, hence our claim holds for $f$.
\end{itemize}

(2) This is a standard consequence of (1) and Markman's Hodge theoretic version of Verbitsky's Global Torelli.
If $X'$ and $X''$ are bimeromorphic there exists a Hodge isometry $H^{2}(X',\ZZ)\simeq H^{2}(X'',\ZZ)$ by \cite[Proposition 1.6.2]{OGW2}. Conversely, given a Hodge isometry $\phi:H^{2}(X',\ZZ)\rightarrow H^{2}(X'',\ZZ)$ and a parallel transport operator
$t:H^{2}(X'',\ZZ)\rightarrow H^{2}(X',\ZZ)$, since $Mon^{2}(X')=O^+(X')$ and $-1$ reverses the orientation of the positive cone of the lattice $H^{2}(X',\ZZ)$ (see Remark \ref{positive cone}),
either  $t\circ\phi\in Mon^{2}(X')$ or $-(t\circ\phi)\in Mon^{2}(X')$. Hence either $\phi$ or $-\phi$ is a Hodge isometry and a parallel transport operator: by Markman's Hodge theoretic Torelli theorem \cite[Theorem 1.3]{mark_tor} the \hk manifolds  $X'$ and $X''$ are birational. 
\end{proof}
\begin{oss} We have actually proven that  $Mon^{2}(\widetilde{K}_{(2,0,-2)}(A,H))$ consists of monodromy operators for families 
of projective manifolds. This statement holds for $\widetilde{K}_{v}(A,H)$ for every $v,A$ and a $v$-generic $H$ and follows
since  the proof of \cite[Theorem 1.7]{pr_crelle} shows that the \hk manifold   $\widetilde{K}_{v}(A,H)$  may be deformed to
$\widetilde{K}_{(2,0,-2)}(A,H)$ using only families of projective manifolds.
\end{oss}


\section{The K\"ahler cone and the birational K\"ahler cone}\label{sec:mov}
The aim of this section is to compute the \kahl and Birational \kahl cones for manifolds of $OG6$ type.   
In  general, for a \hk manifold $X$ these cones are subcones of the positive cone\footnote{ 
The positive cone of a \hk manifold $X$ is strictly contained in the positive cone $C(H^{2}(X,\ZZ))$
of the Beauville-Bogomolov lattice of $X$ as defined in in Subsection \ref{ssec:lattice} and Remark \ref{positive cone}.} $C(X)\subset H^{1,1}(X,\mathbb{R})$ that is defined as the connected component of the set of classes with positive Beauville-Bogomolov
square containing a \kahl class. The Birational \kahl cone is the union $\bigcup_f f^*(K(X'))$ of \kahl cones of $X'$, where $f\,:\,X\dashrightarrow X'$ runs on all birational maps between $X$ and other \hk manifolds $X'$.
The \kahl and the birational \kahl cone  are dual in $C(X)$ to wall divisors and stably prime exceptional divisors respectively (\cite[Section 6]{mark_tor} and \cite[Proposition 1.5]{mon_kahl}). The main result of this section says  that the \kahl and birational \kahl cones of a manifold of $OG6$ type are completely determined by the lattice structure on the Picard lattice.

Before discussing \hk manifolds of $OG6$ type, we recall  definitions and basic properties of stably  prime exceptional divisors and wall divisors.
\begin{defn} A prime exceptional line bundle on a \hk manifold $X$ is a line bundle associated with a  
prime exceptional divisor on $X$.   A stably prime exceptional divisor $D\in Div(X)$ is an effective  divisor whose associated line bundle  is prime exceptional in a general deformation of the pair $(X,\mathcal{O}(D))$.
\end{defn} 

Prime exceptional divisors are stably prime exceptional divisors (see \cite[Proposition 6.6(1)]{mark_tor}), but the converse does not hold in general. The easiest  example of a stably prime exceptional divisor which is not prime exceptional is given by a reducible $-2$ curve on a K3 surface.
 
By looking at the configuration of orthogonal hyperplanes to stably prime exceptional divisors, the positive cone 
 $C(X)$ is cut in a wall and chamber decomposition. One such chamber is the convex hull of the Birational \kahl cone (see \cite[Section 5.2]{mark_tor}) and its algebraic part is the interior of the movable cone, i.e. the cone generated (over $\mathbb{R}$) by all divisors which do not have a divisorial base locus (see \cite{bou} and \cite[Theorem 5.8]{mark_tor}). We remind the reader that the Birational \kahl cone is not connected in general.

\begin{defn} A wall divisor on a \hk manifold $X$ is a  divisor $D$ of negative Beauville-Bogomolov square and such that, for every 
monodromy operator $g:H^{2}(X,\ZZ)\rightarrow H^{2}(X,\ZZ)$ that is a Hodge isometry, the perpendicular to
the class $g([D])$ does not intersect the Birational \kahl cone.   
\end{defn}
By using the natural lattice embedding $H_2(X,\mathbb{Z})\hookrightarrow H^2(X,\mathbb{Q})$, a class of a wall divisor is precisely a multiple of a class of an extremal rational curve, up to the action of monodromy Hodge isometries, see \cite[Proposition 2.3]{KLM2}. Orthogonal hyperplanes to wall divisors also give a wall and chamber decomposition of the positive cone (whence their name), and one of the open chambers is the \kahl cone. In particular, if we restrict this wall and chamber decomposition to the birational \kahl cone, we obtain the \kahl cones of all \hk birational models of $X$.\\
Every stably prime exceptional divisor is a wall divisor 
but the converse does not hold and wall divisor which are not multiples of stably prime exceptional divisors are those responsible for the non connectedness of the birational \kahl cone.

\begin{oss}\label{moninv} 
A very important  property of these classes of divisors is  their invariance under parallel transport:  if $X,Y$ are \hk and $D\in Div(X)$ is a wall divisor (resp. stably prime exceptional), and $\varphi$ is a parallel transport operator between $X$ and $Y$ such that $\varphi([D])\in Pic(Y)$, then $\varphi([D])$ is the class of a wall divisor (resp. up to sign the class of a stably prime exceptional), see \cite[Theorem 1.3]{mon_kahl} (resp. \cite[Proposition 5.14]{mar_prime}).
 Therefore, in order to describe the walls, it suffices to determine the classes of stably prime exceptional and wall divisors up to parallel transport. 
\end{oss}

To determine classes 
of stably prime exceptional divisors and wall divisors  in the case of \hk manifolds of 
$OG6$ type, we will use two tools. The first is the  birational geometry  
of O'Grady six dimensional manifolds, to prove that some divisors are either stably prime exceptional or wall divisors. 
The second is  
the construction  of ample divisors on Albanese fibres of moduli space of sheaves to prove that some divisors are not wall divisors.
Let us start with the first approach:
\begin{lem}\label{lem:pex}
Let $X$ be a manifold of $OG6$ type. Let $D\in Div(X)$ , let $[D]\in H^{2}(X,\ZZ)$ be its class and let $div(D)$ be the divisibility of $[D]$ in $H^{2}(X,\ZZ)$. If one of the following holds:
\begin{itemize}
\item $[D]^2=-4$ and $div(D)=2$,
\item $[D]^2=-2$ and $div(D)=2$,
\end{itemize}
then $[D]$ 
is a primitive class proportional to the class of a stably prime exceptional divisor. 
\end{lem}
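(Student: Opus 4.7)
The strategy is to reduce, via the maximality $Mon^{2}(X)=O^{+}(H^{2}(X,\ZZ))$ established in Theorem \ref{thm:monodromy} and the monodromy invariance of stably prime exceptional divisor classes recalled in Remark \ref{moninv}, to exhibiting a single reference stably prime exceptional divisor in each of the two numerical cases, and then to transport the property back to $X$.

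First I would carry out an orbit analysis. The Beauville-Bogomolov lattice of a manifold of $OG6$ type is $L\cong U^{3}\oplus \langle -2\rangle^{2}$, with discriminant group $A_{L}\cong (\ZZ/2\ZZ)^{2}$ generated by the two $(-2)$ generators divided by $2$, both of $q_{A_{L}}$-value $-1/2$. For any primitive $\alpha\in L$ with $div(\alpha)=2$ the class $[\alpha/2]\in A_{L}$ is non zero and satisfies $q_{A_{L}}([\alpha/2])=\alpha^{2}/4\pmod{2\ZZ}$. When $\alpha^{2}=-4$ this value is $-1\pmod{2\ZZ}$, and it is attained by the unique non zero element of $A_{L}$ equal to the sum of the two generators; when $\alpha^{2}=-2$ the value is $-1/2$, attained by exactly two elements exchanged by the isometry of $L$ swapping the two $\langle -2\rangle$ summands. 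Applying Lemma \ref{lem:eichler} (preceded, when needed, by this swap) shows that in each of the two cases the primitive classes with the prescribed invariants form a single $O^{+}(L)$-orbit.

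Next I would identify reference prime exceptional divisors. For $\alpha^{2}=-2$ and $div(\alpha)=2$, on any $\widetilde{K}_{v}(A,H)$ as in Setting \ref{set} the class $\epsilon_{v}$ is primitive with $\epsilon_{v}^{2}=-2$ and divisibility $2$, and $2\epsilon_{v}$ is the class of the irreducible exceptional divisor of $\pi_{v}$, which is therefore prime exceptional. For $\alpha^{2}=-4$ and $div(\alpha)=2$, Proposition \ref{prop:nagai} provides the Nagai divisor $\widetilde{B}\subset \widetilde{K}_{(2,0,-2)}(A,H)$ on the original O'Grady example, of class $\zeta+\epsilon$ which is primitive of square $-4$ and divisibility $2$. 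In both cases the reference divisor is realized in a projective family of $OG6$-type manifolds along which its class remains algebraic and the divisor remains prime exceptional; a generic such deformation upgrades the conclusion from prime exceptional to stably prime exceptional.

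Finally, given $[D]\in H^{1,1}(X,\ZZ)$ satisfying the hypotheses on an arbitrary $X$ of $OG6$ type, the orbit analysis combined with Theorem \ref{thm:monodromy} produces a parallel transport operator from the reference model carrying the reference stably prime exceptional class to $X$, sending that reference class to $\pm [D]$. Since $[D]$ is algebraic on $X$, the invariance of stably prime exceptional classes under parallel transport (Remark \ref{moninv}, i.e.\ \cite[Proposition 5.14]{mar_prime}) shows that $[D]$ is, up to sign, proportional to the class of a stably prime exceptional divisor on $X$, which is the claim of the lemma. The main obstacle is in pinpointing the reference divisors and establishing that they really are stably, not just classically, prime exceptional; once this is in place the lattice-theoretic orbit reduction and Markman's transport principle are essentially formal.
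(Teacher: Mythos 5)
Your proposal is correct and follows essentially the same route as the paper: exhibit the reference prime exceptional divisors on $\widetilde{K}_{(2,0,-2)}(A,H)$ (the half $\epsilon$ of the exceptional divisor of $\pi$ for square $-2$, and Nagai's divisor $\widetilde{B}$ of square $-4$, both of divisibility $2$), use that prime exceptional divisors are stably prime exceptional, and then combine monodromy maximality (Theorem \ref{thm:monodromy}), Eichler's criterion (Lemma \ref{lem:eichler}) and the parallel-transport invariance of Remark \ref{moninv} to cover every class with the given square and divisibility. The only cosmetic difference is that you spell out the discriminant-class orbit count (including the swap of the two $\langle -2\rangle$ summands) which the paper leaves terse, and that the upgrade from prime to stably prime exceptional is simply the cited \cite[Proposition 6.6(1)]{mark_tor} rather than an ad hoc deformation argument.
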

\begin{proof}
Let $A$, $H$ and $v$ be as in Setting \ref{set}, assume that  $v:=(2,0,-2)$ and set $X:=\widetilde{K}_v(A,H)$.
We have two prime effective divisors $\widetilde{\Sigma}$ and $B$
on $X$, which are respectively the exceptional divisor of the resolution $\pi\,:\,X:=\widetilde{K}_v(A,H)\rightarrow K_v(A,H)$ and the strict transform of the locus parametrizing non locally free sheaves. By \cite[Prop. 3.3.2]{rap_phdz}, there exists a divisor $E$ such that $2[E]=[\widetilde{\Sigma}]$. By \cite{per} and \cite[Theorem 3.5.1]{rap_phdz} we have  $div([E])=div([B])=2$,
$[E]^2=-2$ and $[B]^2=-4$:
 in particular 
$[\widetilde{\Sigma}]$ and  
$[B]$ are classes of prime exceptional divisors, hence also stably prime exceptional, and by Remark \ref{moninv} every element in the parallel transport orbits  of $[E]$ and $[B]$
is proportional to the class of 
a stably prime exceptional divisor. By Lemma \ref{lem:eichler} and Theorem \ref{thm:monodromy}, the square and divisibility determine the orbits of these classes. 
\end{proof}
To construct wall divisors whose classes  are not multiples of classes of stably prime exceptional divisors we use a small contraction  of 
a specific \hk manifold of $OG6$ type whose singular model has already been investigated in Corollary \ref{cor:cham}. We discuss the needed birational geometry in the following example.

\begin{ex}\label{esempio}
Let $A=E_1\times E_2$ be the product of two  elliptic  curves $E_1,E_2$. Let their classes $e,f$ generate  $NS(A)\cong U$ and let $H,H_0$ be two ample line bundles whose image in $NS(A)$ are respectively $de+f,e+f$ with $d>5$.   
By  \cite[Definition 2.1]{PerRap} the line bundle  $H$ is a $(2,0,-2)$-generic  polarization  while $H_0$ is not a $(2,0,-2)$-generic polarization.
As in the $v$-generic case, we denote by $M_{(2,0,-2)}(A,H_0)$ the moduli space of $H_0$-semistable sheaves on $A$ with Mukai vector    
$(2,0,-2)$.

By Proposition \ref{prop:cham} every $H$-semistable sheaf with Mukai vector $(2,0,-2)$ is also $H_0$-semistable, hence there is a natural regular morphism $$\overline{c}:  M_{(2,0,-2)}(A,H)\rightarrow M_{(2,0,-2)}(A,H_0)$$ and we let   
$K_{(2,0,-2)}(A,H_0)$ be the image under $f$ of $K_{(2,0,-2)}(A,H)$ or, equivalently, the Albanese fiber of $M_{(2,0,-2)}(A,H_0)$.
The restriction of $\overline{c}$ gives 
a small contraction $$c:K_{(2,0,-2)}(A,H)\rightarrow K_{(2,0,-2)}(A,H_0).$$ By Proposition \ref{prop:cham} the contracted locus of $c$ represents sheaves that are non $H'$-semistable for an ample line bundle $H'$ whose class is $e+df$ and, by Corollary \ref{cor:cham}, it consists of a finite set of copies of $\mathbb{P}^3$ contained in the smooth locus of $K_{(2,0,-2)}(A,H)$.


Set $X:=\widetilde{K}_{(2,0,-2)}(A,H)$  and let $X_0$
be the blow up of $K_{(2,0,-2)}(A,H_0)$ along the image under $c$ of the singular locus of $K_{(2,0,-2)}(A,H)$. 
We have a commutative diagram 
\begin{equation}
\xymatrix{
X \ar[d] \ar[r]^{\widetilde{c}} & X_0 \ar[d]\\
K_{(2,0,-2)}(A,H) \ar[r]^{c} 
& K_{(2,0,-2)}(A,H_0) & 
}
\end{equation}
and $\widetilde{c}$ is the small contraction we are interested in. We will use  that 
$\widetilde{c}$ is a relative Picard rank one
contraction and the class  $[D]\in H^{2}(K_{(2,0,-2)}(A,H),\ZZ)$ of the contracted extremal curve is  proportional to $ (0,e-f,0)$.

This can be proved as follows.
By the Hodge isometry $H^{2}(X,\ZZ)\simeq (2,0,-2)^{\perp}\oplus_{\perp} \ZZ\epsilon $
(see equation (\ref{Hiso})), the  rank of the Picard group of $X$ is four. Since, by construction, the exceptional divisor $\widetilde{\Sigma}$ of the blow up $X\rightarrow K_{(2,0,-2)}(A,H)$ does not intersect the contracted locus of $\widetilde{c}$, it descends to a Cartier divisor on $X_0$.
Moreover (see the proof of \cite[Proposition 3.9]{pr_crelle}),  the restriction of the Mukai-Donaldson-Le Potier morphism to the algebraic part of $(1,0,-1)^{\perp}$ is defined as in  \cite[Theorem 8.1.5]{HL} and, by this theorem, every  class belonging to the lattice  $(1,0,-1)^{\perp}= H^{2}(K_{(2,0,-2)}(A,H),\ZZ)$, whose component in $H^{2}(A,\ZZ)$ is proportional to $e+f$, descends to a class of a line bundle in $H^{2}(K_{(2,0,-2)}(A,H_0),\ZZ)$.
It follows that $\widetilde{c}$ is a relative Picard rank one
contraction and the saturation of $Pic(X_0)$ inside $Pic(X)$ is generated by $\epsilon$, $e+f$ and $(1,0,1)$.
Finally   $[D]$ is perpendicular to  $Pic(X_0)$, hence proportional to $ (0,e-f,0)$.

\end{ex}

\begin{lem}\label{lem:wall}
Let $X$ be a manifold of $OG6$ type. 
Let $D\in Div(X)$ , let $[D]\in H^{2}(X,\ZZ)$ be its class and let $div(D)$ be the divisibility of $[D]$ in $H^{2}(X,\ZZ)$.  
If $[D]^2=-2$ and $div(D)=1$, then $[D]$ is the class of a wall divisor but not the class of a multiple of a stably prime exceptional divisor.
\end{lem}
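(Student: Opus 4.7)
My plan is to reduce the statement to a single monodromy orbit via Theorem \ref{thm:monodromy} and then verify both assertions on the concrete small contraction of Example \ref{esempio}. The BBF lattice of $X$ is isomorphic to $U^3 \oplus \langle -2 \rangle^2$ and contains $U^2$; the hypothesis $\text{div}([D])=1$ forces $[D]$ to be primitive and to project to $0$ in the discriminant group. By Lemma~\ref{lem:eichler}, any two such classes differ by an element of $S\widetilde{O}^+(H^2(X,\ZZ))$, and this group is contained in $Mon^2(X)=O^+(H^2(X,\ZZ))$ by Theorem~\ref{thm:monodromy}(1). Since both wall divisor classes and stably prime exceptional classes are monodromy invariant by Remark~\ref{moninv}, it suffices to verify both assertions for a single primitive class of square $-2$ and divisibility $1$ on a single \hk manifold of $OG6$ type.

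For the wall divisor part, I would take $X := \widetilde{K}_{(2,0,-2)}(A,H)$ and $\alpha := (0, e-f, 0) \in (1,0,-1)^\perp \subset H^2(X,\ZZ)$ from Example~\ref{esempio}. A Mukai pairing computation gives $\alpha^2=(e-f)^2=-2$, and $(\alpha,(1,e,1))=-1$ with $(1,e,1)\in w^\perp$, so $\text{div}(\alpha)=1$. By the last paragraph of Example~\ref{esempio}, $\alpha$ is proportional to the BBF-dual of the extremal curve contracted by the small contraction $\widetilde c\colon X\to X_0$. Thus $\alpha^{\perp_{B_X}}$ is a codimension one face of $\overline{K(X)}$, so by the wall and chamber description of $C(X)$ recalled at the start of the section, $\alpha$ is the class of a wall divisor.

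For the stably prime exceptional part, Corollary~\ref{cor:cham} shows that the contracted locus of $\widetilde c$ is a finite disjoint union of copies of $\mathbb{P}^3$ of codimension three in the smooth locus of the sixfold $X$, hence $\widetilde c$ is a small contraction. By the general existence of flops for small extremal contractions on projective \hk manifolds, $\widetilde c$ can be flopped via a birational map $f\colon X\dashrightarrow X^+$ to another \hk manifold $X^+$ of $OG6$ type, and the K\"ahler cones $K(X)$ and $f^{*}K(X^+)$ lie on opposite sides of the wall $\alpha^{\perp_{B_X}}$, both inside the birational K\"ahler cone $BK(X)$. Hence $\alpha^{\perp_{B_X}}$ meets the interior of $\overline{BK(X)}$ rather than its boundary, and the duality between $\overline{BK(X)}$ and stably prime exceptional classes recalled at the start of the section rules out the possibility that a positive multiple of the primitive class $\alpha$ be the class of a stably prime exceptional divisor. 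The main obstacle will be ensuring that the flop $X^+$ exists and is of $OG6$ type, so that the wall lies strictly inside $\overline{BK(X)}$; this follows from the explicit description of the contracted locus in Corollary~\ref{cor:cham} together with standard results on small contractions of projective \hk manifolds applied to the setting of Example~\ref{esempio}.
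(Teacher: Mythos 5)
Your overall strategy coincides with the paper's: the same reduction to a single orbit via Lemma \ref{lem:eichler}, Theorem \ref{thm:monodromy} and Remark \ref{moninv}, the same manifold $X=\widetilde{K}_{(2,0,-2)}(A,H)$ from Example \ref{esempio}, and the same class $(0,e-f,0)$ dual to the contracted extremal curve. Where you diverge is only in the last step: the paper concludes both assertions at once by citing \cite[Proposition 2.3]{KLM2} (a wall divisor class is precisely a multiple of an extremal rational curve class up to monodromy Hodge isometries), whereas you unpack this into two cone-theoretic arguments. Your argument for the wall-divisor half (the pullback of an ample class from $X_0$ gives a codimension one face of $\overline{K(X)}$ supported on $\alpha^{\perp_{B_X}}$, and the chamber decomposition by wall divisors then forces $\alpha$ to be proportional to, hence itself, a wall divisor) is an acceptable, if somewhat compressed, substitute; the spanning/local finiteness details can be checked on $\NS(X)\otimes\mathbb{R}$, where the pulled-back ample cone of $X_0$ already fills an open subset of $\alpha^{\perp}\cap \NS(X)\otimes\mathbb{R}$.

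The soft spot is the second half. You invoke ``the general existence of flops for small extremal contractions on projective \hk manifolds'' to produce a \emph{smooth} \hk manifold $X^+$ of $OG6$ type on the other side of the wall. No such general theorem is available in dimension six: BCHM-type results give the flop as a projective variety, but smoothness (hence \hk-ness) of the flop is only automatic in dimension four (Wierzba--Wi\'sniewski); in general the flopped model could a priori be singular, and then its ``K\"ahler cone'' would not contribute to $BK(X)$ as defined. So, as written, this step would fail. It is, however, fixable entirely within the paper's toolkit, and essentially in the way you gesture at: Corollary \ref{cor:cham}(2) exhibits the second model explicitly as $\widetilde{K}_{(2,0,-2)}(A,H')$ with $h'=e+df$, an $OG6$-type manifold isomorphic to $X$ away from the finitely many Lagrangian $\mathbb{P}^3$'s, and Lemma \ref{lem:ample} (with the roles of $e$ and $f$ exchanged) produces ample classes on it pairing with the flopped curve with the opposite sign, so that its pulled-back K\"ahler cone lies in $BK(X)$ on the other side of $\alpha^{\perp_{B_X}}$; your duality argument with $\overline{BK}(X)$ and stably prime exceptional classes from \cite[Section 6]{mark_tor} then goes through. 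Alternatively, one can simply quote \cite[Proposition 2.3]{KLM2}, as the paper does, and both conclusions follow at once from the fact that $\alpha$ is proportional to the class of an extremal curve whose contraction is small.
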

\begin{proof} Since $Mon^{2}(X)\simeq O^+(U^3\oplus_{\perp}(-2)^2)$ by Theorem \ref{thm:monodromy}, by Remark \ref{moninv} and Lemma \ref{lem:eichler} it suffices to show the existence of a specific \hk manifold $X$
of $OG6$ type and a wall divisor $D\in Div(X)$ such that $D$ is not a stably prime exceptional divisor and
$[D]^2=-2$ and $div([D])=1$.
We consider the case where $X$
is as in Example \ref{esempio} and $[D]=(0,e-f,0)\in H^{2}(X,\ZZ)$: clearly $[D]^2=-2$ and $div(D)=1$. 
As shown in Example \ref{esempio}
the class  $[D]$ is, up to scalars, the image  in $H^{2}(X,\mathbb{Q})$ of the class of an extremal  curve giving a small contraction: by \cite[Proposition 2.3]{KLM2} this implies that $D$ is a wall divisor and no multiple of $[D]$ is the class of a stably prime exceptional divisor. 
\end{proof}

Let us move to the second part of the proof. We exhibit ample line bundles on \hk manifolds of $OG6$ type
by using a classical construction that produces ample line bundles on the the loci of moduli spaces of sheaves 
where the determinant is fixed. 

To produce the ample line bundles we consider again the case of Example \ref{esempio}. 
Before stating the result we recall that, by Corollary \ref{cor:cham}, if the class of the $(2,0,-2)$-generic polarization $H$ is 
$h=a_he+b_hf$, the Albanese fiber $K_{(2,0,-2)}(A,H)$ of the moduli space of $H$-semistable sheaves with Mukai vector 
$(2,0,-2)$ only depends on the sign of $a_h-b_h$.
\begin{lem}\label{lem:ample}
Let $A=E_1\times E_2$ be the product of two elliptic curves such that  $NS(A)$ is generated by the classes $e$ and $f$ of $E_1$ and $E_2$.
Let $H$ be a $(2,0,-2)$-generic polarization,  let $h:=a_he+b_hf\in NS(A)$ be its class and assume that $a_h>b_h$.  
For $a>b>0$ set $\gamma:=ae+bf\in NS(A)$.   
\begin{enumerate}
\item{Using the identification provided by the Mukai-Donaldson-Le Potier morphism (\ref{mdl*}), for $k>>0$, the class
$$(-1,k\gamma ,-1)\in (1,0,-1)^{\perp}=H^{2}(K_{(2,0,-2)}(A,H),\ZZ)$$ is the class of an ample line bundle on $K_{(2,0,-2)}(A,H)$.}
\item{Using the identification (\ref{Hiso}), for $k>>c>>d>0$, the class $$(-c,k\gamma,-c)+ (-d)\epsilon \in (1,0,-1)^{\perp}\oplus_{\perp} \ZZ \epsilon =H^{2}(\widetilde{K}_{(2,0,-2)}(A,H),\ZZ)$$ is the class of an ample line bundle on $\widetilde{K}_{(2,0,-2)}(A,H)$.}
\item{Using the identification (\ref{Hiso}), there exist integral numbers $c,m,d>0$ such that 
the class $$(-c,(m+1)e+mf,-c)+ (-d)\epsilon \in (1,0,-1)^{\perp}\oplus_{\perp} \ZZ \epsilon =
H^{2}(\widetilde{K}_{(2,0,-2)}(A,H),\ZZ) $$ is the class of an ample line bundle on $\widetilde{K}_{(2,0,-2)}(A,H)$.}
\end{enumerate}
  \end{lem}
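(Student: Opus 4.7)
The plan is to construct ample line bundles on the singular Albanese fibre $K_{(2,0,-2)}(A,H)$ via the classical determinantal construction on Gieseker moduli spaces, and then to transfer them to the crepant resolution $\widetilde{K}_{(2,0,-2)}(A,H)$ by the standard trick of subtracting a small positive multiple of the exceptional class.

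For part (1), I would apply the Le Potier / Donaldson-Mukai construction (\cite[Sections 4.B and 8.1]{HL}): given a $v$-generic polarization $\Gamma$ of class $\gamma$ on $A$, the determinantal line bundle built from a quasi-universal sheaf on $A\times M_v(A,\Gamma)$ produces an ample class on the Gieseker moduli space. Because $\gamma=ae+bf$ with $a>b>0$ lies in the same $v$-chamber as $h$, Proposition \ref{prop:cham} implies that the $\Gamma$-Gieseker moduli space coincides as a scheme with $M_v(A,H)$, and the resulting ample class restricts to an ample class on the Albanese fibre $K_v(A,H)$. The Mukai-Donaldson-Le Potier isomorphism (\ref{mdl*}) identifies this Le Potier class in $H^2(K_v(A,H),\ZZ)=w^\perp$ with an element of the form $(-1,k\gamma,-1)$: the outer coefficients $-1$ are fixed once one normalizes the quasi-universal family, while the positive integer $k$ in front of $\gamma$ absorbs all lower-order corrections involving $\chi$ and $\td_A$.

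For part (2), the resolution map $\pi\colon\widetilde{K}_v(A,H)\to K_v(A,H)$ is a projective birational morphism whose exceptional divisor has class $2\epsilon$. Consequently $\pi^{\ast}$ of an ample class on $K_v(A,H)$ is nef but not ample on $\widetilde{K}_v(A,H)$, and a standard Nakai/Kleiman argument shows that subtracting a sufficiently small positive multiple of $\epsilon$ produces an ample class. Starting from $\nu_{v,H}(-c,k\gamma,-c)=c\cdot \nu_{v,H}(-1,(k/c)\gamma,-1)$, which is ample on the base whenever $k\gg c$ by part (1) and rescaling, we obtain ampleness of $\nu_{v,H}(-c,k\gamma,-c)-d\epsilon$ on the resolution as soon as $d$ is a positive rational number much smaller than $c$; passing to integer coefficients yields the hierarchy $k\gg c\gg d>0$ required.

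For part (3), we reduce to part (2) by allowing $\gamma$ to depend on $m$. Indeed $(m+1)e+mf$ lies in the chamber $a>b>0$ for every $m\ge 1$, and scaling out $c$ gives $(-c,(m+1)e+mf,-c)=c\cdot(-1,\tfrac{m+1}{c}e+\tfrac{m}{c}f,-1)$. The asymptotic ampleness criterion of part (1), applied to the rational class $\tfrac{m+1}{c}e+\tfrac{m}{c}f$, reduces to requiring that this class be deep inside the ample cone of $A$, which holds as soon as $m\gg c$. Thus the existence of integers $c,m,d>0$ with the claimed ampleness is obtained by first picking $d>0$, then $c\gg d$, and finally $m\gg c$. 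The main obstacle is making the identification in the Mukai lattice in part (1) precise: one needs to verify that the Le Potier class agrees under $\nu_{v,H}$ with the specific element $(-1,k\gamma,-1)$ of $w^\perp$ rather than with another positive class in the same ray, and this requires an explicit Grothendieck-Riemann-Roch computation with the quasi-universal sheaf and the normalizations for $\nu_{v,H}$ used throughout the paper; parts (2) and (3) are then formal consequences of rescaling and the standard perturbation argument for blow-ups.
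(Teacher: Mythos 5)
Your part (2) and the general strategy (determinantal ample classes on the singular Albanese fibre, then perturbation by a small negative multiple of $\epsilon$ on the resolution, using that $\mathcal{O}(-E)$ is relatively ample for the blow-up) agree with the paper. The problem is part (1): the step you yourself flag as ``the main obstacle'' --- identifying the Le Potier ample class under $\nu_{v,H}$ with the specific element $(-1,k\gamma,-1)$ of $w^{\perp}$ rather than some other class --- is precisely the content of the statement, and you do not carry it out. The paper does not do the Grothendieck--Riemann--Roch computation either; it avoids it by a twisting trick: by convexity of the ample cone one reduces to $\gamma$ the class of a $(2,0,-2)$-generic polarization and then, via Corollary \ref{cor:cham}, to $\gamma=h$; one then applies \cite[Theorem 8.1.11]{HL} to the twisted Mukai vector $(2,2kh,-2+k^2h^2)$, for which the ample class has the simple form $(-1,0,-1+k^2h^2/2)$, and tensoring back by $-kH$ (which preserves (semi)stability and is compatible with the Mukai--Donaldson--Le Potier morphism) carries it to $(-1,kh,-1)$. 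Either this argument or the explicit computation must be supplied; as written, (1) is an acknowledged gap, and everything else depends on it.

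Part (3) is a second genuine gap: it does not follow from (1)--(2) by ``letting $\gamma$ depend on $m$''. Part (1) gives ampleness of $(-1,k(ae+bf),-1)$ only for $k$ above a threshold depending on the fixed ray $ae+bf$; your classes $\frac{m+1}{c}e+\frac{m}{c}f$ move along rays tending to the boundary direction $e+f$, where no uniform threshold is available, so ``which holds as soon as $m\gg c$'' is an unsupported uniformity claim. Convexity cannot repair it: to reach outer coefficients $-c$ you must use ample classes from (1) whose outer coefficients sum to $-c$, and each such summand $(-1,k(ae+bf),-1)$ contributes $k(a-b)\ge k$ to the difference between the $e$- and $f$-coefficients, while the known nef classes $(0,ae+bf,0)$, $a\ge b\ge 0$, contribute a nonnegative amount; since in $(m+1)e+mf$ this difference is exactly $1$, such a decomposition would force $c=1$ and the threshold in (1) to be $k=1$, which (1) does not give. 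This is why the paper brings in new geometric input for (3): the small contraction $\widetilde{c}\colon X\to X_0$ of Example \ref{esempio}, whose contracted extremal curve has class proportional to $(0,e-f,0)$. Classes of the form $(-c,m(e+f),-c)-d\epsilon$ descend to ample classes on $X_0$, hence are nef and vanish exactly on that extremal ray, and adding the nef class $(0,e,0)$, which is strictly positive on the ray, yields the ample class $(-c,(m+1)e+mf,-c)-d\epsilon$. Your conclusion in (3) is in fact true with the ordering $m\gg c\gg d$, but proving it requires the wall structure that this very lemma is used to establish in Proposition \ref{prop:wall}, so the argument as proposed is circular.
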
   
\begin{proof}
(1) By convexity of the ample cone of $K_{(2,0,-2)}(A,H)$, it suffices to prove the result in the case where 
$\gamma$ is the class of a $(2,0,-2)$-generic polarization. In this case, by Corollary \ref{cor:cham}, we may assume that 
$\gamma =h$.  
By \cite[Theorem 8.1.11]{HL} the class $(-1,0,-1+k^2h^2/2)$ is the class of an ample divisor
on the Albanese fibre $K_{(2,2kh, -2+ k^2h^2)}(A,H)$ of the moduli space of $H$-semistable sheaves on $A$
with Mukai vector $(2,2kh, -2+ k^2h^2)$. Since tensoring by a multiple of the polarization preserves  stability
and semistability and is compatible with the Mukai-Donaldson-Le Potier morphism, tensoring back 
by $-kH$ we get that $(-1,kh,-1)\in (1,0,-1)^{\perp}$ is the class of an ample line bundle on   
$K_{(2,0,-2)}(A,H)$.

(2) Since $\pi_{(2,0,-2)}:\widetilde{K}_{(2,0,-2)}(A,H)\rightarrow K_{(2,0,-2)}(A,H)$ is a divisorial contraction of an extremal curve such that the class of the contracted divisor  is a positive multiple of $\epsilon$, item (1) implies (2).

(3) By (2), if $a\ge b\ge 0$, the class $(0,ae+bf,0)\in  H^{2}(\widetilde{K}_{(2,0,-2)}(A,H),\ZZ)$ is a limit of classes of ample line bundles: hence it is the class of a nef line bundle and the same holds for the classes  $(0,e+f,0)$ and $(0,e,0)$. By Example \ref{esempio}, the class of a line bundle on  $\widetilde{K}_{(2,0,-2)}(A,H)$ descending to an ample line bundle on $X_0$
is of the form $(-c,m(e+f),-c) +(-d)\epsilon$: moreover, since $(0,e+f,0)$ is nef and positive multiples of $(1,0,1)$ and
$\epsilon$ are effective with negative Beauville-Bogomolov square (see \cite[Theorem 9.1]{per}), we get $m,c,d>0$. Since $X_0$ is the contraction of an extremal curve of class proportional to $(0,-e+f,0)$ and $(0,e,0)$ is nef with degree non zero on that curve (see Example \ref{esempio}), we obtain that
$(-c,m(e+f)+e,-c) +(-d)\epsilon$
is the class of an ample divisor on $\widetilde{K}_{(2.0.-2)}(A,H)$.
\end{proof}

With the above, we are now ready to determine wall divisors and stably prime exceptional divisors on \hk manifolds of $OG6$ type:
\begin{prop}\label{prop:wall}
Let $X$ be a manifold of $OG6$ type. 
Let $D\in Div(X)$ , assume that its class  $[D]\in H^{2}(X,\ZZ)$ is primitive and let $div(D)$ be the divisibility of $[D]$ in $H^{2}(X,\ZZ)$.
 Then $D$ is a wall divisor if and only if one of the following holds:
\begin{enumerate}[i)]
\item{ $[D]^2=-4$ and $div([D])=2$,} 
\item{ $[D]^2=-2$ and $div([D])=2$,} 
\item{ $[D]^2=-2$ and $div([D])=1$.} 
\end{enumerate}
In cases $i)$ and $ii)$ a multiple of $[D]$ is the class of a stably prime exceptional and in case $iii)$ no multiple of $[D]$ is represented by a non zero effective divisor.
\end{prop}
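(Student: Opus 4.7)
The plan is to combine the classification of $Mon^2(X)$-orbits of primitive classes, afforded by Theorem \ref{thm:monodromy} and Eichler's criterion (Lemma \ref{lem:eichler}), with the wall divisors already identified in Lemmas \ref{lem:pex} and \ref{lem:wall} and with the ample classes constructed in Lemma \ref{lem:ample}. The discriminant group of $H^2(X,\ZZ)\cong U^3 \oplus (-2)^2$ is $(\ZZ/2)^2$ with two types of non-zero elements; combined with the divisibility, this classifies primitive orbits with $v^2<0$ into families indexed by $(v^2, div([D]))$ and an additional mod-$8$ parity of $v^2$ in the $div=2$ case, with cases i), ii), iii) of the statement corresponding precisely to the orbits with $|v^2|$ minimal.

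For the sufficiency direction, Lemmas \ref{lem:pex} and \ref{lem:wall} handle cases i)--iii) directly, including the assertion that a multiple is stably prime exceptional for i) and ii). For the remaining non-effectivity claim of case iii), the key input is the Fujiki-type positivity $q_X(E,A)\geq 0$ for any pseudo-effective class $E$ and any ample $A$. In the model $\widetilde{K}_{(2,0,-2)}(A,H)$ of Example \ref{esempio}, a direct Mukai-pairing computation shows that the class $(0,e-f,0)$ pairs to $-1$ with the ample class $(-c,(m+1)e+mf,-c)-d\epsilon$ of Lemma \ref{lem:ample}(3); hence no positive multiple is pseudo-effective there. By Theorem \ref{thm:monodromy}, every primitive class with invariants as in iii) on any $OG6$-type manifold can be specialized, via a smooth projective family of $OG6$-type manifolds along which the class remains algebraic (combining Noether--Lefschetz density with \cite[Theorem 1.17, Remark 1.18]{PerRap}), to this specific class on this specific model; upper semicontinuity of pseudo-effectivity in families then yields the general statement.

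For the necessity direction, by monodromy invariance of wall divisors (Remark \ref{moninv}) it suffices to exhibit, for each non-listed orbit, a representative orthogonal to an ample class on some $OG6$-type manifold. Using the ample class family $A_2(c,k,d,\gamma):=(-c,k\gamma,-c)-d\epsilon$ of Lemma \ref{lem:ample}(2), the orbits with $div=1$ and $v^2\leq -4$ are handled by representatives in the rank-$4$ sublattice of $H^2(\widetilde{K}_{(2,0,-2)}(A,H),\ZZ)$ perpendicular to $e,f,\zeta,\epsilon$; standard Noether--Lefschetz density realizes each such class algebraically on a nearby $OG6$-type manifold while preserving ampleness of $A_2$. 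Orbits with $div=2$ and $v^2<-4$ are handled analogously, using representatives with explicit $\zeta$- and $\epsilon$-components and choosing the parameters $(c,k,d,\gamma)$ to force orthogonality.

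The main obstacle will be orchestrating, for the $div=2$ orbits, the simultaneous requirements of realizing the prescribed discriminant-group class for the chosen representative, keeping the representative algebraic on some $OG6$-type manifold, and tuning the parameters in Lemma \ref{lem:ample}(2) so that the Mukai pairing with the representative vanishes. The open nature of the ample cone together with the full freedom of the parameters $(c,k,d,\gamma)$ is what makes this always possible.
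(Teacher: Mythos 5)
Your overall skeleton (maximal monodromy plus Eichler's criterion to reduce to finitely many orbits, Lemmas \ref{lem:pex} and \ref{lem:wall} for the listed cases, and ample classes orthogonal to representatives of the remaining orbits for the converse) coincides with the paper's, but two steps do not close as written. The most serious one is your treatment of the non-effectivity claim in case iii). First, the Fujiki-type inequality only excludes \emph{positive} multiples of $(0,e-f,0)$: the opposite class $(0,f-e,0)$ pairs \emph{positively} with the ample class of Lemma \ref{lem:ample}(3) (and with all the ample classes of Lemma \ref{lem:ample}(2)), so your computation says nothing about negative multiples, while the statement forbids every non-zero multiple. Second, your propagation step runs in the wrong direction: upper semicontinuity of $h^0$ only says that effectivity can jump up at special members of a family, so the vanishing of all $h^0(mD_0)$ on the very special, Picard-rank-four model $\widetilde{K}_{(2,0,-2)}(E_1\times E_2,H)$ yields the vanishing for the \emph{generic} pair in the connected family of pairs, not for an arbitrary pair $(X',D')$, which is in general not a generization of that model (it may be special in directions unrelated to it). The paper does not argue this way: it deduces case iii) from \cite[Proposition 2.3]{KLM2}, because $(0,e-f,0)$ is, up to scalar, dual to an extremal curve contracted by the \emph{small} contraction of Example \ref{esempio}, and then transports the conclusion using the deformation-invariant notions themselves (wall divisors and stably prime exceptional classes, Remark \ref{moninv}, \cite{mon_kahl}, \cite{mar_prime}), which is exactly what makes the statement travel to all $OG6$ manifolds.

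The necessity direction also needs more than you provide for the divisibility-two orbits of square $<-4$. The parameters in Lemma \ref{lem:ample}(2) are not free: one needs $k\gg c\gg d>0$ and $\gamma=ae+bf$ with $a>b>0$, so for the natural representative $2(f-ae)-\zeta$ orthogonality to $(-c,k\gamma,-c)-d\epsilon$ forces $c=k(a'-ab')\geq k$, contradicting $c\ll k$; hence ``tuning the parameters'' alone fails for the obvious representatives. Your scheme can be rescued by taking representatives with large $\zeta$- and $\epsilon$-coefficients and a component $u$ in the transcendental $U^2$ (for instance $2u+\zeta-c\epsilon$ with $c$ odd and $d=1$, adjusting $u^2$ to fix the square) and then applying Noether--Lefschetz while keeping the ample class ample; but this verification is precisely what you omit, and it is the point where the paper instead uses the sharper ample classes of Lemma \ref{lem:ample}(3) together with adding multiples of the nef class $(0,e,0)$, so that all residual standard forms are killed by explicit ample classes on the single model, with no deformation argument at all. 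Your divisibility-one reduction to representatives orthogonal to $e,f,\zeta,\epsilon$ is correct and is a genuinely simpler alternative to the paper's case A), at the modest cost of a Noether--Lefschetz deformation.
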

\begin{proof}
As wall divisors are invariant under parallel transport which preserves their Hodge type by \cite[Theorem 1.3]{mon_kahl}, we can prove the statement on a specific \hk manifold of $OG6$ type: we consider the case  $X:=\widetilde{K}_{(2,0,-2)}(A,H)$ as in Lemma \ref{lem:ample}. 
Then, $Pic(X)\cong U\oplus_{\perp} (-2)^2$ and by Lemma \ref{lem:eichler} and Theorem \ref{thm:monodromy}, any class in $H^2(X,\mathbb{Z})$ can be moved with the monodromy group inside $Pic(X)$, therefore every wall divisor shows up in this case.

By Lemmas \ref{lem:pex} and \ref{lem:wall}, elements of square $-2$ or $-4$ and divisibility $2$ are indeed   classes of wall divisors with an effective multiple (that is, stably prime exceptional divisors) and elements of square $-2$ and divisibility $1$ are classes of wall divisors with no effective multiples (that is, wall divisors which are not
stably prime exceptional).
This proves the 'if' part of the statement.

By Lemma \ref{lem:eichler}, since the order of the discrimnant group of the Beauville Bogomolov lattice is $4$, beyond the cases listed in $i)$, $ii)$ and $iii)$ we have four standard forms 
for classes of primitive divisors $[D]\in H^{2}(X,\mathbb{Z})=(1,0,-1)^{\perp}\oplus \ZZ\epsilon$ of strictly negative squares: 
\begin{enumerate}[A)]
\item{$[D]=(0,ae-f,0)$ with $a>1$,}
\item{$[D]=(-1,2(-ae+f),-1)-\epsilon$ with $a\ge 1$,}
\item{$[D]=(-1,2(-ae+f),-1)$ with $a\ge 1$,}
\item{$[D]=(0,2(-ae+f),0)-\epsilon$ with $a\ge 1$.}
\end{enumerate} 
By Theorem \ref{thm:monodromy}(1),  every primitive class in the Picard group of  a \hk manifold of $OG6$ type  of negative square  and not satisfying i),ii) or iii), 
can be moved by a parallel transport operator to a class $[D]\in Pic(X)$ as in A), B), C) or  D). 
Since the image under a parallel transport operator of the class of a wall divisor is a  class of a wall divisors if it is of Hodge type (see Remark \ref{moninv}), 
we have to prove that every class 
as in A), B), C) or D) is not the class of a wall divisor.
Since cases C) and D) are monodromy equivalent, it suffices to show that every class as in   A), B), or C) is not the class of a wall divisor.  

In case A), let $\gamma=ae+f\in NS(A)$ be a primitive ample class. Let $ae-f$ be a generator of $\gamma^\perp$ in $NS(A)$.   
By  Lemma \ref{lem:ample}(2), $(-c,k\gamma,-c)+(-d)\epsilon$ is ample on $\widetilde{K}_v(A)$ for $k>>c>>d>0$.
This ample divisor is orthogonal to $(0,ae-f,0)$: therefore $(0,ae-f,0)$ is not the class of a wall divisor.

To deal with case B), we use again Lemma \ref{lem:ample}.
By Lemma \ref{lem:ample}(3) there exists an ample divisor on $X$ whose class is 
$(-c, (m+1)e+mf, -c)+(-d) \epsilon$ for strictly positive $m,c,d\in \ZZ$.
By Lemma \ref{lem:ample}(1), the class $(0,e,0)$ is limit of classes of ample divisors, hence it is the class of a nef line bundle. In particular, since  $ma-m+c+d-1\ge0$, the class 
$$\Gamma:=(-c, (m+1)e+mf, -c)+(-d) \epsilon + (0,(ma-m+c+d-1)e,0)= $$$$=(-c,(ma+c+d)e+mf, -c) +(-d) \epsilon=
(c,m(ae+f)+(c+d)e, -c)+(-d) \epsilon.$$
is the sum of classes of an ample and a nef divisor, hence it is the class of an ample divisor on $X$.
Since $(-1,2(-ae+f),-1)-\epsilon$ is perpendicular to $\Gamma$ we conclude that 
it is not the class of a wall divisor.

In case C) one can argue as in case B) and show that
the class $$\Gamma:= (-c,(m+1)e+mf, -c)+(-d) \epsilon + (0,(ma-m+c-1)e,0)= $$$$=(-c,(ma+c)e+mf, -c)+(-d) \epsilon=
(-c,m(ae+f)+ce, -c)+(-d) \epsilon$$
is the class of an ample divisor on $X$ perpendicular to $(-1,2(-ae+f),-1)$. 
\end{proof}
As a consequence of Proposition \ref{prop:wall}, using \cite[Section 6]{mark_tor} and \cite{mon_kahl}, we get the main result of this section.
In the statement, following the standard notation, for every \hk manifold $X$ and for every $\alpha\in H^2(X,\ZZ)$, we denote by 
$\alpha^{\perp_{B_{X}}}$ the perpendicular to $\alpha\in H^2(X,\mathbb{R})$ with respect to the real extension of the Beauville-Bogomolov form and we denote by $div(\alpha)$
the divisibility of $\alpha$ in the lattice  $H^2(X,\mathbb{Z})$ (see Definition \ref{def1}).

\begin{thm}\label{thm:amp}
Let $X$ be a \hk manifold of $OG6$ type and 
let $C(X)$ be the positive cone of $X$. Then
\begin{enumerate}
\item{The closure in $H^{1,1}(X,\mathbb{R})$ of the birational K\"ahler cone $\overline{BK}(X)$ of $X$ is  the closure of the connected component of $$C(X)\setminus \bigcup_{\substack{\alpha\in H^{1,1}(X,\ZZ),\\
B_{X}(\alpha,\alpha)=-2\; or \; -4, \\ div(\alpha)=2.}} \alpha^{\perp_{B_{X}}}$$ containing a K\"ahler class.}
\item{The K\"ahler cone $K(X)$ is the connected component of 
$$C(X)\setminus \bigcup_{\substack{\alpha\in H^{1,1}(X,\ZZ),\\
B_{X}(\alpha,\alpha)=-2\; or  \\ 
B_{X}(\alpha,\alpha)=-4\; and\;  div(\alpha)=2.}} \alpha^{\perp_{B_{X}}}$$
containing a K\"ahler class.}
\end{enumerate}

\end{thm}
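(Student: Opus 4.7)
The plan is to derive Theorem~\ref{thm:amp} by substituting the classification of wall divisors and stably prime exceptional divisors from Proposition~\ref{prop:wall} into the general chamber description of the K\"ahler and birational K\"ahler cones recalled in this section.

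I would begin by invoking the framework laid out at the start of the section: by \cite[Section 6]{mark_tor}, for any \hk manifold $X$, the closure $\overline{BK}(X)$ of the birational K\"ahler cone equals the closure of the connected component of
$$C(X)\setminus \bigcup_{D}[D]^{\perp_{B_X}}$$
containing a K\"ahler class, where $D$ runs over the stably prime exceptional divisors on $X$; and by \cite[Proposition~1.5]{mon_kahl}, $K(X)$ is the analogous connected component of the complement obtained by letting $D$ run instead over all wall divisors on $X$. Since each hyperplane $[D]^{\perp_{B_X}}$ depends on $[D]$ only up to a nonzero scalar, the indexing sets can be reduced to primitive classes in the Hodge lattice $H^{1,1}(X,\ZZ)$.

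Next I would apply Proposition~\ref{prop:wall} term by term. That proposition asserts that a primitive class $\alpha\in H^{1,1}(X,\ZZ)$ of negative Beauville-Bogomolov square is the class of a wall divisor if and only if it falls into one of three types: (i) $\alpha^2=-4$ and $div(\alpha)=2$, (ii) $\alpha^2=-2$ and $div(\alpha)=2$, or (iii) $\alpha^2=-2$ and $div(\alpha)=1$; moreover, a multiple of $\alpha$ is the class of a stably prime exceptional divisor exactly in cases (i) and (ii), while in case (iii) no nonzero multiple of $\alpha$ is effective. Plugging these two explicit lists into the general cone descriptions of the previous paragraph yields immediately the formulas in parts (1) and (2) of Theorem~\ref{thm:amp}.

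Finally, a small compatibility remark is in order: the statement of the theorem does not restrict $\alpha$ to be primitive, whereas Proposition~\ref{prop:wall} is. However, $H^2(X,\ZZ)$ is an even lattice, so every class $\alpha$ with $\alpha^2\in\{-2,-4\}$ is automatically primitive: a proper scaling $\alpha=n\alpha_0$ with $n\ge 2$ would force $\alpha_0^2\in\{-1/2,-1\}$, which contradicts integrality of $\alpha_0^2$ or evenness of the form. Hence the index sets appearing in Theorem~\ref{thm:amp}(1) and (2) coincide with those furnished by Proposition~\ref{prop:wall}, and the theorem follows with no further argument. The substantive work is entirely contained in Proposition~\ref{prop:wall}, so there is no essential obstacle at this stage; the present theorem is just a translation of the classification into cone-theoretic language.
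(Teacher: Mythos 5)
Your proposal is correct and follows essentially the same route as the paper: the paper likewise combines Markman's description of $\overline{BK}(X)$ via stably prime exceptional divisors (\cite[Proposition 6.10]{mark_tor}) and the wall-divisor description of $K(X)$ from \cite[Proposition 1.5]{mon_kahl} with the classification in Proposition \ref{prop:wall}. Your extra remark that classes of square $-2$ or $-4$ in an even lattice are automatically primitive is a correct (and welcome) tidying of a point the paper leaves implicit.
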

\begin{proof}
(1) Let $S$ be the set of stably prime exceptional divisors on $X$. By \cite[Proposition 6.10]{mark_tor}, the closure of the birational K\"ahler cone is the closure of the component of $C(X)\setminus\bigcup_{\alpha\in S}\alpha^{\perp_{B(X)}}$ containing a \kahl class. By Proposition \ref{prop:wall}, stably prime exceptional divisors are (up to multiples) those of divisibility $2$ and squares $-2$ or $-4$, so the claim follows. \\
(2) Let $W$ be the set of wall divisors on $X$. Analogously, by \cite[Proposition 1.5]{mon_kahl}, the \kahl cone is the connected component of $C(X)\setminus\bigcup_{\alpha\in W}\alpha^{\perp_{B(X)}}$ containing a \kahl class. By Proposition \ref{prop:wall}, wall divisors are those of square $-4$ and divisibility $2$ or of square $-2$ and any divisibility, therefore the claim follows.
\end{proof}
\begin{oss}
For every \hk manifold $X$ the ample cone can be obtained by intersecting the \kahl cone with $H^{1,1}(X,\mathbb{Q})\otimes \mathbb{R}$ and
similarly the movable cone coincides with the intersection of the closure of the birational K\"ahler cone with
$H^{1,1}(X,\mathbb{Q})\otimes \mathbb{R}$, as long as nef divisors are also movable. The latter is a consequence of Theorem \ref{prop:lagr}. Therefore, Theorem \ref{thm:amp}
also determines the ample and the movable cones of every \hk manifold of
$OG6$ type.
\end{oss}
\section{Lagrangian fibrations and applications}\label{sec:lagr}
The aim of this section is to prove that, whenever a manifold of $OG6$ type has a square zero divisor, it has a rational lagrangian fibration. First, we establish the number of monodromy orbits of a square zero divisor:
\begin{lem}\label{lem:lagrorbit}
Let $l\in L:=U^3\oplus (-2)^2$ be a primitive  element of square zero. Then $div(l)=1$ and there is a single orbit for the action of $O^+(L)$ on the set of  primitive  isotropic elements of $L$.
\end{lem}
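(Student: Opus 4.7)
The plan is to argue both claims by a direct analysis of the discriminant form of $L$, followed by an application of Eichler's criterion (Lemma \ref{lem:eichler}).

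First I would compute the discriminant group and form. Since $U$ is unimodular, the only contribution to $A_L$ comes from the two $(-2)$ summands: hence $A_L \cong (\ZZ/2\ZZ)^2$, with generators $\alpha_1, \alpha_2$ represented by $e_i/2$ where $e_i$ is a generator of the $i$-th copy of $(-2)$. The discriminant form $q_{A_L}$ takes values $q_{A_L}(\alpha_1) = q_{A_L}(\alpha_2) = -1/2$ and $q_{A_L}(\alpha_1+\alpha_2) = -1 \pmod{2\ZZ}$. In particular, no nonzero element of $A_L$ is isotropic.

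Next I would show $div(l)=1$ for every primitive isotropic $l\in L$. Let $d:=div(l)$. Then $l/d \in L^\vee$ and its class $[l/d]\in A_L$ is well defined; since $l$ is primitive, the order of $[l/d]$ in $A_L$ equals $d$. Because $A_L$ has exponent $2$, this forces $d\in\{1,2\}$. If $d=2$, then $[l/2]$ is a nonzero element of $A_L$ with $q_{A_L}([l/2])=(l,l)/4=0\pmod{2\ZZ}$, which contradicts the computation above. Hence $d=1$.

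Finally, to get transitivity of the $O^+(L)$ action on primitive isotropic elements, I would write $L=U^2\oplus L'$ with $L':=U\oplus(-2)^2$ and apply Lemma \ref{lem:eichler}. Given two primitive isotropic elements $v,w\in L$, we have $v^2=w^2=0$ and, by the previous paragraph, $div(v)=div(w)=1$, so $[v/div(v)]=[w/div(w)]=0$ in $A_L$. The hypotheses of Lemma \ref{lem:eichler} are satisfied, so there exists $g\in S\widetilde{O}^+(L)\subseteq O^+(L)$ with $g(v)=w$, which proves the claim.

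The argument is essentially bookkeeping with the discriminant form; the only point demanding slight care is the verification that no nontrivial class in $A_L$ is isotropic, since this is exactly what rules out the possible value $div(l)=2$ that would otherwise split the orbit.
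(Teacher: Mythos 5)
Your proof is correct and follows essentially the same route as the paper: rule out $div(l)=2$ by showing the square of such a primitive element cannot vanish, then conclude transitivity from Eichler's criterion (Lemma \ref{lem:eichler}) applied to $L=U^2\oplus(U\oplus(-2)^2)$. The only cosmetic difference is that you phrase the divisibility step via the anisotropy of the discriminant form on $A_L\cong(\ZZ/2\ZZ)^2$, whereas the paper writes a divisibility-two element as $2w+at+bs$ and computes its square modulo $8$ — the two computations carry exactly the same information.
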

\begin{proof}
As the discriminant group of $L$ is of two torsion, the divisibility can be either one or two. 
Any primitive element of divisibility $2$ can be written as $2w+at+bs$ for some $w\in U^3$ and $t,s$ such that $\langle t,s\rangle=(-2)^2$ and $ \langle t,s\rangle^\perp=U^3$, moreover if  $a$ is odd  $b$ is  even. This means that, modulo $8$, the square of such an element is congruent to either $-2$ or $-4$, which is not the case for an element of square zero. Therefore $l$ has divisibility one and, by lemma \ref{lem:eichler}, the action of $O^+(L)$ has a single orbit.
\end{proof}
Recall that, for any \hk manifold $X$, if $p\,:\, X\to \mathbb{P}^n$ is a lagrangian fibration, the divisor $p^*(\mathcal{O}(1))$ is primitive, nef and isotropic. In particular, if a divisor is induced by a lagrangian fibration on a different birational model of $X$, it will be isotropic and in the boundary of the Birational \kahl cone. The following is a converse for manifolds of $OG6$ type:

\begin{thm}\label{prop:lagr}
Let $X$ be a \hk manifold of $OG6$ type and let $\mathcal{O}(D)\in Pic(X)$ be a non-trivial line bundle whose Beauville-Bogomolov  square is $0$.
Assume that the class $[D]$ of $\mathcal{O}(D)$ belongs to the boundary of the birational K\"ahler cone of $X$. 
Then, there exists  a smooth \hk manifold $Y$ and a bimeromorphic map 
$\psi\,:Y\to X$ such that $\mathcal{O}(D)$ induces  a lagrangian fibration $p:Y\rightarrow \mathbb{P}^3$. 
Moreover, smooth fibres of $p$ are $(1,2,2)$-polarized abelian threefolds.
\end{thm}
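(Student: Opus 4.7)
The plan is to use monodromy transitivity on primitive isotropic classes to reduce the existence of a birational lagrangian fibration to a single explicit example, which is then exhibited using a moduli space of sheaves on an abelian surface.

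First, Lemma \ref{lem:lagrorbit} together with Theorem \ref{thm:monodromy}(1) shows that $Mon^2(X) = O^+(H^2(X,\ZZ))$ acts transitively on the set of primitive isotropic classes in $H^2(X,\ZZ)$. Combining this with Markman's description of parallel transport between chambers of the movable cone (\cite[Section 5.3 and Lemma 5.17(ii)]{mar_prime}), after replacing $X$ with a \hk birational model we may assume that $[D]$ lies on the boundary of the K\"ahler cone of this model, hence is nef. Matsushita's Theorem \cite[Theorem 1.2]{matsu_lagr} then reduces the statement to the construction of a single \hk manifold $Y_0$ of $OG6$ type equipped with a lagrangian fibration $p_0 : Y_0 \to \P^3$ whose general fibre is a $(1,2,2)$-polarized abelian threefold.

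For the explicit example I would take a principally polarized abelian surface $(A,\gamma)$, so $\gamma^2 = 2$, and set $w := (0,\gamma,1)$, which has $w^2 = 2$, so $v := 2w = (0,2\gamma,2)$ fits in Setting \ref{set}. Choose a $v$-generic polarization $H$ on $A$; the variety $Y_0 := \widetilde{K}_v(A,H)$ is then of $OG6$ type. Since $h^0(A,2\gamma) = 4$, the Fitting-support morphism $\sigma : M_v(A,H) \to |2\gamma| \cong \P^3$ is well defined, and its restriction to the Albanese fibre $K_v(A,H)$, pulled back via the resolution $\pi_v : Y_0 \to K_v(A,H)$, yields a rational map $Y_0 \dashrightarrow \P^3$. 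Under the Mukai--Donaldson--Le Potier isomorphism this corresponds to the class $(0,0,1) \in w^{\perp}$, which is primitive, of square zero and divisibility one, consistent with Lemma \ref{lem:lagrorbit}.

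The main obstacle will be the identification of the general fibre as a $(1,2,2)$-polarized abelian threefold. A general $C \in |2\gamma|$ is smooth of arithmetic genus $g = 1 + (2\gamma)^2/2 = 5$, and the fibre of $\sigma$ over $[C]$ is a torsor under $\Pic^0(C)$. Restricting to the Albanese fibre $K_v(A,H)$ amounts to passing to the kernel of a natural norm-type morphism $\Pic^0(C) \to A$ induced by the inclusion $C \hookrightarrow A$, which yields an abelian threefold of Prym type. The polarization type $(1,2,2)$ should then be read off from a direct computation using the theta divisor of $C$ together with the restriction of a $\pi_v$-relatively ample class on $Y_0$. Finally, to upgrade the rational fibration on $Y_0$ to a regular lagrangian fibration on a smooth \hk birational model $Y$ of $X$, one applies Matsushita's theorem to the nef primitive isotropic class obtained after the monodromy reduction; the resulting $Y$ and $p: Y \to \P^3$ are those required in the statement.
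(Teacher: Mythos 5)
Your overall architecture is the same as the paper's: transitivity of $Mon^2(X)=O^{+}(H^2(X,\ZZ))$ on primitive isotropic classes (Lemma \ref{lem:lagrorbit} plus Theorem \ref{thm:monodromy}), deformation equivalence of pairs via \cite[Lemma 5.17(ii)]{mar_prime}, Matsushita's theorem \cite[Theorem 1.2]{matsu_lagr} to propagate the birational lagrangian fibration through the deformation space, and one explicit example given by the support morphism on a moduli space of pure one-dimensional sheaves on a principally polarized abelian surface (the paper uses $v=(0,2h,0)$, you use $v=(0,2\gamma,2)$; both fit Setting \ref{set}). However, there is a genuine gap in the final claim. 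Matsushita's theorem only propagates the \emph{existence} of a birational lagrangian fibration; it says nothing about the polarization type of its fibres. Your $Y$ is only deformation equivalent to, not birational to, the example $Y_0$, so the assertion that the smooth fibres of $p:Y\to\P^3$ are $(1,2,2)$-polarized abelian threefolds does not follow from a computation on $Y_0$ alone. The paper closes exactly this point by invoking Wieneck's theorem \cite[Theorem 1.1]{wie}, which asserts that the polarization type of a general fibre of a birational lagrangian fibration induced by an isotropic divisor depends only on the deformation class of the pair $(X,\mathcal{O}(D))$; without this (or a substitute) the last sentence of the statement is unproved in your write-up.

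Two lesser points. First, your $(1,2,2)$ computation in the example is only a sketch (``should then be read off''): the paper instead cites that the fibres admit \'etale double covers which are Jacobians of genus three curves (\cite{rap_phd}, \cite[Remark 5.1]{MRS}) and deduces the type from \cite[Corollary 12.1.5]{Birkenhake-Lange}. Your Prym-type description of the fibre as (a torsor under) the kernel of $J(C)\to A$ for a general genus five curve $C\in|2\gamma|$ is plausible and can be completed using the theory of complementary abelian subvarieties, but as written it is not carried out. Second, the preliminary step ``after replacing $X$ with a birational model we may assume $[D]$ is nef'' is neither needed nor justified: an isotropic class on the boundary of the birational K\"ahler cone need not a priori lie in the closure of the K\"ahler cone of any birational model, since walls may accumulate at the isotropic boundary, and the paper deliberately avoids this by arguing with the locus $V_{bir}(X,\mathcal{O}(D))\subseteq Def(X,\mathcal{O}(D))$ of Matsushita's theorem and a connectedness argument for the space of pairs rather than with nefness on a fixed model.
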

\begin{proof}
By the work of Matsushita \cite[Theorem 1.2]{matsu_lagr} the locus $V_{bir}(X,\mathcal{O}(D))$, inside the base $Def(X,\mathcal{O}(D))$ of the universal deformations  of the pair $(X,\mathcal{O}(D))$, where the parallel transport of $[D]$ defines a birational lagrangian fibration is either the locus $V_{mov}(X,\mathcal{O}(D))$ where the parallel transport of $[D]$ belongs to the boundary of the birational k\"ahler cone or empty. Moreover if the locus $V_{bir}(X,\mathcal{O}(D))$ is non empty, it is dense  in $Def(X,\mathcal{O}(D))$.
In particular, if we have a family of pairs $(\mathcal{X},\mathcal{D})\rightarrow B$, the locus 
$$ \{t\in B\,\text{ such that } V_{bir}(X_t,\mathcal{O}(D_t)) \text{ is dense in } Def(X_t,\mathcal{O}(D_t))\}$$
is open and closed. Therefore, the statement holds for $(X,\mathcal{O}(D))$ if and only if it holds for a deformation $(X',\mathcal{O}(D'))$ of it 
such that 
$[D']$ belongs to the closure of the birational \kahl cone.

It remains to show that the space parametrizing pairs of the form $(X',\mathcal{O}(D'))$, such that  $[D']^2=0$ and $[D']$ belongs to the boundary of the birational K\"ahler cone of $X'$,  is connected  and that, for one such pair,  $\mathcal{O}(D')$ defines a birational lagrangian fibration.

By Lemma \ref{lem:lagrorbit}, for every $(X',\mathcal{O}(D'))$,  there exists a parallel transport operator sending $[D]\in H^2(X,\mathbb{Z})$ to $[D']\in H^2(X',\mathbb{Z})$. Moreover  since $[D]$ and $[D']$ are in the boundaries of the birational K\"ahler cones, 
there are  K\"ahler classes $k$ and $k'$, on $X$ and $X'$ respectively, that are positive on $[D]$ and $[D']$
with respect to the Beauville Bogomolov pairings. By \cite[Lemma 5.17(ii)]{mar_prime} we obtain  that $(X',\mathcal{O}(D'))$ and  $(X,\mathcal{O}(D))$ are deformation equivalent.

Finally one example where a lagrangian fibration exists is well known. It suffices to consider $\widetilde{K}_{(0,2h,0)}(A,H)$ for a very general principally polarized Abelian surface $(A,h)$: the Lagrangian fibration associated to it is the one induced by the fitting morphism, sending a sheaf into its support. These fibres have étale double covers which are Jacobians of genus three curves (see \cite{rap_phd} and \cite[Remark 5.1]{MRS}), and have the required polarization of type $(1,2,2)$ by \cite[Corollary 12.1.5]{Birkenhake-Lange}. Moreover, by \cite[Theorem 1.1]{wie}, the polarization type of a general fibre of a birational lagrangian fibration induced by an isotropic divisor $D$ on a \hk $X$ only depends on the deformation class of the pair $(X,\mathcal{O}(D))$.
\end{proof}
As a consequence we settle a conjecture due to Hassett-Tschinkel, Huybrechts and Sawon  for this deformation equivalence class: 
\begin{cor}\label{cor:lagr}
Let $X$ be a manifold of $OG6$ type with a square zero non zero divisor. Then $X$ has a bimeromorphic   model which has a dominant map to $\mathbb{P}^3$ whose general fiber is a $(1,2,2)$-polarized abelian threefold.
\end{cor}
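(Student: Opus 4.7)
The plan is to deduce Corollary~\ref{cor:lagr} from Theorem~\ref{prop:lagr} by parallel transporting $(X,\mathcal{O}(D))$ to the Kummer-type reference pair $(X_0,\mathcal{O}(D_0))$ used in the proof of that theorem: namely $X_0=\widetilde{K}_{(0,2h,0)}(A,H)$ with $(A,h)$ a very general principally polarized abelian surface, and $[D_0]$ the nef primitive isotropic class inducing the Fitting Lagrangian fibration $X_0\dashrightarrow\mathbb{P}^3$, whose generic fibre is a $(1,2,2)$-polarized abelian threefold. Given a non-zero divisor $D$ on $X$ with $B_X([D],[D])=0$, I would first divide by the divisibility to reduce to the case where $[D]$ is primitive, and use Riemann--Roch on $X$ to orient $[D]\in\overline{C(X)}$ after possibly replacing $D$ with $-D$.

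The heart of the argument is producing a parallel transport operator $\phi\colon H^2(X_0,\ZZ)\to H^2(X,\ZZ)$ sending $[D_0]$ to $[D]$, which is immediate from Lemma~\ref{lem:lagrorbit} (primitive isotropic vectors in $U^3\oplus(-2)^2$ form a single $O^+$-orbit) combined with the monodromy computation $Mon^2(X)=O^+(H^2(X,\ZZ))$ of Theorem~\ref{thm:monodromy}(1). Markman's deformation-equivalence criterion \cite[Lemma 5.17(ii)]{mar_prime}, applied to $\phi$, then realises $(X,\mathcal{O}(D))$ and $(X_0,\mathcal{O}(D_0))$ as points in the same connected component of the moduli of pairs. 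Matsushita's Theorem~1.2 of \cite{matsu_lagr}, in the form used in the proof of Theorem~\ref{prop:lagr}, implies that the existence of a birational Lagrangian fibration is either nowhere or densely realised in this component; since $(X_0,\mathcal{O}(D_0))$ witnesses the non-empty case, Theorem~\ref{prop:lagr} applies to $(X,\mathcal{O}(D))$ and produces the required bimeromorphic model $Y\to X$ with a dominant map $Y\to\mathbb{P}^3$. The $(1,2,2)$ polarization type of the generic fibre is invariant along the deformation by \cite[Theorem 1.1]{wie}.

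The main obstacle will be matching the positivity hypothesis built into both Markman's criterion and Theorem~\ref{prop:lagr}, namely that $[D]\in\overline{BK}(X)$. Since $[D]$ is isotropic and lies on $\partial\overline{C(X)}$, and since by Theorem~\ref{thm:amp}(1) the cone $\overline{BK}(X)$ is a single connected component of $C(X)$ cut out by countably many rational hyperplanes orthogonal to stably prime exceptional classes, this reduces to showing that an effective isotropic class on $X$ belongs to the closure of the movable cone, which coincides rationally with $\overline{BK}(X)$ by the remark following Theorem~\ref{thm:amp}.
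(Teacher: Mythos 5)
There is a genuine gap, and it sits exactly where you flagged it: the hypothesis of Theorem \ref{prop:lagr} that the class lies in the boundary of the birational \kahl cone. Your proposed reduction --- ``orient $[D]$ by Riemann--Roch so that it is effective, and then argue that an effective isotropic class lies in the closure of the movable cone'' --- does not work. First, effectivity of $\pm D$ is not established: for an isotropic class on a \hk sixfold, Riemann--Roch gives $\chi(\mathcal{O}(D))=\chi(\mathcal{O}_X)$ but no vanishing of higher cohomology, so you cannot conclude $h^0\neq 0$ without already knowing something like nefness. Second, and more seriously, even an effective isotropic class need not lie in $\overline{BK}(X)$: take $e$ a movable primitive isotropic class and $\sigma$ a prime exceptional class with $B_X(\sigma,\sigma)=-2$ and $B_X(e,\sigma)=1$; then $D=e+\sigma$ is effective and isotropic but $B_X(D,\sigma)=-1<0$, so $D$ pairs negatively with a stably prime exceptional divisor and is outside the closure of the movable cone. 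So the ``reduction'' in your last paragraph is to a false statement, and the positivity hypothesis of both Theorem \ref{prop:lagr} and of Markman's Lemma 5.17(ii) (which, as used in the paper, also requires the classes to lie in the boundaries of the birational \kahl cones) is never verified.

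The paper closes precisely this gap differently: by \cite[Section 6]{mark_tor}, the group $Mon^2(X)\cap Hdg(H^2(X))$ of monodromy Hodge isometries acts transitively on the exceptional chambers of the positive cone, one of which has the same closure as $BK(X)$, and every point of $\overline{C(X)}$ lies in the closure of some exceptional chamber. Hence either $[D]$ or $-[D]$ can be moved by a monodromy Hodge isometry (concretely, a composition of reflections $R_{\sigma}$ in stably prime exceptional classes --- in the example above $R_{\sigma}(D)=e$) to an isotropic divisor class on the \emph{same} $X$ lying in $\partial\overline{BK}(X)$, and Theorem \ref{prop:lagr} then applies to that class. Note also that your steps involving Lemma \ref{lem:lagrorbit}, Markman's deformation criterion, Matsushita's theorem and Wieneck's polarization-type invariance merely re-derive Theorem \ref{prop:lagr}, which is already available; the only new content the corollary requires is the chamber-moving step, and that is the step your argument does not supply.
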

\begin{proof} By   Theorem \ref{prop:lagr} it suffices to show that there exists another isotropic divisor on $X$ which is in the boundary of the birational \kahl cone. This follows from \cite[Section 6]{mark_tor}, where Markman proves that $Mon^2(X)\cap Hdg(H^2(X))$ (the group of Hodge isometries which are monodromy operators) acts transitively on the set of exceptional chambers of the positive cone, one of which contains the Birational \kahl cone and has its same closure (moreover, every element of the closure of the positive cone is in the closure of one such exceptional chamber). Thus, either $[D]$ or $-[D]$ is in the closure of one such exceptional chamber, and a monodromy Hodge isometry can move it to the boundary of the birational \kahl cone.
\end{proof}

As an  immediate consequence of the above Theorem, by using \cite[Theorem 4.2]{rie}, we obtain that the Weak Splitting property conjectured by Beauville \cite{beau07} holds when the manifold has a non zero square zero divisor.
\begin{cor}\label{ultimo}
Let $X$ be a projective  \hk  manifold of $OG6$ type and let $D\neq 0$ be a square zero divisor on it. Let $DCH(X)\subset CH_{\mathbb{Q}}(X)$ be the subalgebra generated by divisor classes. Then the restriction of the cycle class map $cl_{|DCH(X)}\,:\,DCH(X)\to H^*(X,\mathbb{Q})$ is injective .
\end{cor}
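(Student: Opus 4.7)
The plan is to combine the existence of a rational Lagrangian fibration proven in Corollary \ref{cor:lagr} with Rie\ss's theorem on the weak splitting property. The argument is essentially a two-step reduction.

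First, I would apply Corollary \ref{cor:lagr} to the pair $(X,D)$. Since $D$ is a nonzero divisor of square zero, the corollary produces a hyperk\"ahler manifold $Y$ of $OG6$ type, bimeromorphic to $X$, admitting a dominant rational map $p\colon Y\dashrightarrow \mathbb{P}^3$ whose general fibre is a $(1,2,2)$-polarized abelian threefold. In particular, $p$ is a rational Lagrangian fibration on $Y$. Since $X$ is projective, standard results on birational geometry of hyperk\"ahler manifolds (via deformations of pairs along the locus where the Lagrangian class stays in the boundary of the birational K\"ahler cone, as already used in the proof of Theorem \ref{prop:lagr}) allow us to assume that $Y$ is also projective.

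Next, I would invoke Rie\ss's Theorem 4.2 from \cite{rie}, which asserts that Beauville's weak splitting property holds for any projective hyperk\"ahler manifold admitting, or birational to one admitting, a rational Lagrangian fibration. Applied to $Y$, this gives injectivity of the cycle class map restricted to $DCH(Y)$.

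Finally, to transfer the conclusion from $Y$ back to $X$, I would use the well-known fact (due to Huybrechts, see also the discussion in \cite{rie}) that birational hyperk\"ahler manifolds have canonically isomorphic Chow rings, and that this isomorphism is compatible with cycle class maps and sends the divisor-generated subalgebra of one to the divisor-generated subalgebra of the other. The injectivity of $cl_{|DCH(Y)}$ then yields the injectivity of $cl_{|DCH(X)}$. The only real content in this corollary beyond Corollary \ref{cor:lagr} is the citation of \cite[Thm.\ 4.2]{rie}; the main obstacle is simply ensuring that the hypotheses of that theorem are met, i.e.\ that one can reduce to a projective birational model on which a rational Lagrangian fibration is genuinely realized, which is guaranteed by Corollary \ref{cor:lagr} and the deformation-theoretic argument used in the proof of Theorem \ref{prop:lagr}.
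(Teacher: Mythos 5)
Your proposal is correct and follows essentially the same route as the paper: combine Corollary \ref{cor:lagr} with Rie\ss's Theorem 4.2. The paper's proof is even shorter because it uses that theorem in the form already stated for manifolds one of whose birational models carries a Lagrangian fibration, so your final transfer step from $Y$ back to $X$ (via the birational invariance of the Chow ring and the projectivity of $Y$) is careful but not needed.
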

\begin{proof}
Theorem 4.2 of \cite{rie} proves that the Weak Splitting property holds for all manifolds $X$ such that one of their birational model has a lagrangian fibration
and Corollary \ref{cor:lagr} prove that this lagrangian fibration exists for any manifold of $OG6$ type with a non zero isotropic divisor.   
\end{proof}


\begin{thebibliography}{}
\bibitem{bakkerlehn} 
B. Bakker, C.Lehn, The global moduli theory of symplectic varieties, 
arXiv:1812.09748.
\bibitem{beau_ast}
A. Beauville, G\'{e}om\'{e}trie des surfaces K3: modules et P\'{e}riodes, Ast\'{e}risque vol. 126 (1985), 1--193.
\bibitem{beau07}
A. Beauville, On the splitting of the Bloch–Beilinson filtration. In Jan Nagel and Chris
Peters, editors, Algebraic Cycles and Motives, (vol. 2), London Mathematical Society lecture
note series 344, pages 38–-53. Cambridge University Press, 2007.
\bibitem{Birkenhake-Lange}
C. Birkenhake, H. Lange, 
\em Complex abelian varieties. \em
Second edition.  Grundlehren der Mathematischen Wissenschaften, 302. Springer-Verlag, Berlin, 2004.
\bibitem{bou}
S. Boucksom, Divisorial Zariski decompositions on compact complex manifolds. Ann. Sci. \'Ec. Norm. Sup\'er. (4) {vol. 37} (2004), No. 1: 45--76.
\bibitem{GHS09} V. Gritsenko, K. Hulek and G.K. Sankaran, Abelianisation of orthogonal groups and the fundamental group of modular varieties, J. of Alg. vol. 322 (2009) 463--478.
\bibitem{GHS10} V. Gritsenko, K. Hulek and G.K. Sankaran, Moduli spaces of irreducible symplectic manifolds, Compos. Math. vol. 146 (2010), no. 2 404--434.
%
%
\bibitem{FGA} A. Grothendieck, \em Technique de descente et th\'eor\`emes d'existence en géométrie alg\'ebrique. VI. Les sch\'emas de Picard: propri\'et\'es g\`en\`erales. \em  S\'eminaire Bourbaki, Vol. 7, Exp. No. 236, 221--243, Soc. Math. France, Paris, 1995.
\bibitem{huy_basic}
D. Huybrechts, Compact \hk manifolds: basic results, Invent. Math. vol. 135 (1999), no. 1, 63--113.
\bibitem{HL} 
D. Huybrechts, M. Lehn, \em The geometry of moduli spaces of sheaves\em. Second edition. Cambridge Mathematical Library. Cambridge University Press, Cambridge, 2010.
\bibitem{kls}
D. Kaledin, M. Lehn and Ch. Sorger, Singular symplectic moduli spaces, Invent. Math. vol. 164 (2006), 591--614.
\bibitem{KLM2}
A.L. Knutsen, M. Lelli Chiesa and G. Mongardi, Wall divisors and algebraically coisotropic subvarieties of irreducible holomorphic symplectic manifolds, Trans. Amer. Math. Soc. vol. 371 (2019) no. 2, 1403--1438.
\bibitem{Lange}
H. Lange, Universal families of extensions, J. Algebra 83 (1983), no. 1, 101--112.
\bibitem{lehnpacienza}
C. Lehn, G. Pacienza, On the log minimal model program for irreducible symplectic varieties, Algebr. Geom. 3 (2016), no. 4, 392--406.
\bibitem{ls}
M. Lehn and C. Sorger, La singularit\'e de O'Grady, J. Alg. Geom. vol. 15 (2006) no. 4 753--770.
\bibitem{mark_tor}
E. Markman, A survey of Torelli and monodromy results for \hk manifolds, Proc. of the conference "Complex and Differential Geometry", Springer Proceedings in Mathematics (2011), Volume 8, 257--322.
\bibitem{mar_prime}
E. Markman, Prime exceptional divisors on holomorphic symplectic varieties and monodromy reflections, Kyoto J. Math. vol. 53 (2013), no. 2, 345--403.
\bibitem{mark_lagr}
E. Markman, Lagrangian fibrations of holomorphic-symplectic varieties of $K3^{[n]}$-type. Algebraic and complex geometry, 241–-283, Springer Proc. Math. Stat., 71, Springer, Cham, 2014.
\bibitem{mark_mon}
E. Markman, The monodromy of generalized Kummer varieties and algebraic cycles on their intermediate Jacobians, arXiv:1805.11574.
\bibitem{MarkmanMehrotra}
E. Markman,  S. Mehrotra,  Hilbert schemes of K3 surfaces are dense in moduli, 
Math. Nach. vol. 290, no. 5--6 (2017),  876--884.
\bibitem{matsu_lagr}
D. Matsushita, On isotropic divisors on irreducible symplectic manifolds. Higher dimensional algebraic geometry, in honour of Professor Yujiro Kawamata's sixtieth birthday, 291–-312, Adv. Stud. Pure Math., 74, Math. Soc. Japan, Tokyo, 2017.
\bibitem{mon_kahl} 
G. Mongardi, A note on the \kahl and Mori cones of \hk manifolds, Asian. Math. J. vol. 19 (2015) no. 4 583--592.
\bibitem{mon_mon}
G. Mongardi, On the monodromy of irreducible symplectic manifolds, Alg. Geom. vol. 3 (2016) no. 3, 385--391.
\bibitem{MRS}
G. Mongardi, A. Rapagnetta and G. Saccà, The Hodge diamond of O'Grady's 6-dimensional example, Comp. Math. vol. 154 (2018) no. 5, 984--1013.
\bibitem{nagai}
Y. Nagai, Birational geometry of O'Grady's six dimensional example over the Donaldson-Uhlenbeck Compactification, Math. Ann. vol. 358 (2014) no. 1--2, 143--168.
\bibitem{nami_killtor}
Y. Namikawa, Counter-example to global Torelli problem for irreducible symplectic manifolds, Math. Ann. vol. 324 (2002), no. 4, 841--845.
\bibitem{nami_sing}
Y. Namikawa, On deformations of $\mathbb{Q}$-factorial symplectic varieties. J. Reine Angew. Math. vol. 599 (2006), 97--110.
\bibitem{Nik80} 
V. V. Nikulin, Integral symmetric bilinear forms and some of their applications,  Math. USSR Izv. vol. 14 (1980), 103--167.
\bibitem{OGW2}
K.G. O'Grady, The weight-two Hodge structure of moduli spaces of sheaves on a K3 surface, J. Alg. Geom vol. 6 (1997), no. 4, 599--644.
\bibitem{OG6}
K.G. O'Grady, A new six-dimensional irreducible symplectic variety, J. Alg. Geom vol. 12 (2003), no. 3, 435--505.
\bibitem{kier_mon}
K.G. O'Grady, Compact tori associated to hyperkaehler manifolds of Kummer type, preprint arXiv:1805.12075.
\bibitem{ono}
C. Onorati, On the monodromy group of desingularised moduli spaces of sheaves on K3 surfaces, preprint arXiv:2002.04129

\bibitem{per}
A. Perego, The 2-factoriality of the O’Grady moduli spaces. Math. Ann.
346 (2), 367–-391 (2009).
\bibitem{perult}
A. Perego, Examples of irreducible symplectic varieties, Prerint arXiv 1905.11720.

%
\bibitem{pr_crelle}
A. Perego, A. Rapagnetta, Deformation of the O'Grady moduli spaces, J. Reine Angew. Math. vol. 678 (2013), 1--34.
\bibitem{perapimrn} A. Perego, A. Rapagnetta, Factoriality Properties of Moduli Spaces of Sheaves on Abelian and K3 Surfaces, Int. Math. Res. Not. IMRN 2014, no 3, 643--680. 
\bibitem{PerRap}
A. Perego, A. Rapagnetta, The moduli spaces of sheaves on K3 surfaces are irreducible symplectic varieties, Preprint arXiv:1802.01182.
\bibitem{rap_phd}
A. Rapagnetta, Topological invariants of O'Grady's six dimensional irreducible symplectic variety, PhD thesis, Universit\`a degli studi di Roma Tor Vergata (2004).
\bibitem{rap_phdz}
A. Rapagnetta, Topological invariants of O'Grady's six dimensional irreducible symplectic variety. Math. Z. vol. 256 (2007), no. 1, 1--34.
\bibitem{rap_form}
A. Rapagnetta, On the Beauville Form of the Known Irreducible Symplectic Varieties. Math. Ann. 321, 77--95 (2008).
\bibitem{rie}
U. Rie\ss, On Beauville's conjectural weak splitting property. Int. Math. Res. Not. IMRN 2016, no. 20, 6133–-6150.
\bibitem{shioda}
T. Shioda, The period map of abelian surfaces, J. Fac. Sci. Univ. Tokyo, Sec. IA 25, (1978) 47--59.
\bibitem{yoshi_moduli}
K. Yoshioka,  Moduli spaces of stable sheaves on abelian surfaces. Math. Ann. 321 (2001), no. 4, 817--884.
\bibitem{yoshiFM2}
K. Yoshioka, Stability and the Fourier-Mukai transform. II. Compos. Math. 145 (2009), no. 1, 112--142.
\bibitem{ver_tor}
M. Verbitsky, \em Mapping class group and a global Torelli theorem for Hyperk\"{a}hler manifolds, \em Duke Math. J. vol. 162 (2013), no. 15 2929--2986.
\bibitem{VoisinI}
C. Voisin, \em Hodge theory and complex algebraic geometry. I. \em Translated from the French by Leila Schneps. Reprint of the 2002 English edition. Cambridge Studies in Advanced Mathematics, 76. Cambridge University Press, Cambridge 2007. 
\bibitem{wie}
B. Wieneck, On polarization types of Lagrangian fibrations, Manuscripta mathematica (2016) 151--305.
\end{thebibliography}
\end{document}